\def\IC{\mathbb{C}}
\def\IH{\mathbb{H}}
\def\BH{\mathcal{B}(H)}
\newcommand{\SPAN}{\mathrm{span}}
\newcommand{\id}{\mathop{\mathrm{id}}}
\newcommand{\TRO}{\mbox{TRO}}
\newcommand{\TROu}{{T^*}}
\newcommand{\Fock}{{\mathscr{F}_{-}}}
\newcommand{\CAR}{{\mathscr{A}}}
\newcommand{\tp}[3]{\{#1 , #2 , #3\}}
\newcommand{\trop}[3]{[#1 , #2 , #3]}
\newcommand{\utro}[1]{\alpha_{#1}}
\newcommand{\utr}{\alpha}
\newcommand{\tmpref}[1]{}
\newcommand{\JCstar}{$JC^*$}
\newcommand{\JC}{$JC$}
\newcommand{\JBstar}{$JB^*$}
\newcommand{\JWstar}{$JW^*$}
\newcommand{\Sp}{\mathrm{Sp}}
\newcommand{\sgn}{\mathrm{sgn}}
\numberwithin{equation}{section}
\newtheorem{proposition}{Proposition}[section]
\newtheorem{lemma}[proposition]{Lemma}
\newtheorem{theorem}[proposition]{Theorem}
\newtheorem{corollary}[proposition]{Corollary}
\theoremstyle{definition}
\newtheorem{remark}[proposition]{Remark}
\newtheorem{remarks}[proposition]{Remarks}
\newtheorem{definition}[proposition]{Definition}
\begin{document}

\title{Operator space structure of \JCstar-triples and TROs, I}
\author[L.~J.~Bunce]{Leslie J. Bunce}
\address{Department of Mathematics, University of Reading, Reading RG6
2AX}
\email{l.j.bunce@reading.ac.uk}
\author[B. Feely]{Brian Feely}
\address{School of Mathematics\\Trinity College\\Dublin 2}
\email{brian.feely@gmail.com}
\author[R.~M.~Timoney]{Richard M. Timoney}
\address{School of Mathematics\\Trinity College\\Dublin 2}
\email{richardt@maths.tcd.ie}
\thanks{The work of the second and third authors was
supported
by the Science Foundation Ireland under grant 05/RFP/MAT0033.}
\date{Final draft \today}

\subjclass[2000]{47L25; 17C65; 46L70}

\keywords{
CAR algebra;
Cartan factor;
JC-operator space;
operator space ideal;
reversibility;
ternary ring of operators;
universal TRO}

\begin{abstract}
We embark upon a systematic investigation of operator space structure
of \JCstar-triples via a study of the TROs (ternary rings of operators)
they generate. Our approach is to introduce and develop a variety of
universal objects, including universal TROs, by which means we are
able to describe all possible operator space structures of a
\JCstar-triple. Via the concept of reversibility we obtain
characterisations of universal TROs over a wide range of examples. We
apply our results to obtain explicit descriptions of
operator space structures of Cartan factors
regardless of dimension.
\end{abstract}
\maketitle

\section{Introduction}

TROs (\emph{ternary rings of operators}), the Banach subspaces of
$C^*$-algebras closed under the ternary product  $\trop{a}{ b}{ c} =
ab^*c$, are natural and significant objects in the category of
operator spaces. We mention (see \cite{Hamana99,RuanTAMS1981},
\cite[Chapter 6]{ER}) that the algebraic isomorphisms between TROs
coincide with the surjective complete isometries, the isomorphisms
between operator spaces, and that each operator space $E$ may be
realised (completely isometrically) as an operator subspace of its
injective envelope $I(E)$ which, in turn, may be realised as a TRO: in
which case if $F$ is an operator subspace of a $C^*$-algebra $A$ then
every complete isometry from $F$ onto $E$ is the restriction of a
TRO homomorphism from the TRO generated by $F$ onto the triple
envelope $\mathcal{T}(E)$ of $E$, the TRO generated by $E$ in its
realisation in $I(E)$.

A complex Banach space $E$ is said to be a \emph{\JCstar -triple}
if there is
a surjective isometry $\phi \colon F \to E$ where $F$ is a subspace
of a TRO closed under the (Jordan) triple product $\tp{a}{ b}{ c}
=(1/2)( \trop  a b c + \trop c b a)$. In these circumstances $F$ is
called a \emph{concrete} \JCstar -triple or, more specifically, a
\JCstar -subtriple of the TRO in question. By a result originating
from the general theory of \JBstar -triples, the triple product
induced on $E$ defined by $\tp{\phi(a)} {\phi(b)} {\phi(c)} = \phi(
\tp a b c)$ is independent of $\phi$ \cite{Kaup}. In particular, all
linear isometric images of TROs are \JCstar -triples. An abstract
Hilbert space $H$ is a \JCstar -triple with triple product given by
 $\tp {\xi}{ \eta}{ \zeta} = ( \langle \xi, \eta
 \rangle \zeta + \langle \zeta, \eta \rangle \xi)/2$
as may be seen from either of the surjective isometries
$H \to \BH e$ ($\xi \mapsto \xi \otimes h$),
$H \to e\BH $ ($\xi \mapsto h \otimes \bar{\xi}$) where $h \in H$ is a unit
vector and $e$ is the projection $h \otimes h$. Although linearly
isometric, $\BH e$ and $e \BH$ are not completely isometric if $H$ has
dimension greater than unity \cite[\S10]{Pisier}.

A contractively complemented subspace $E$ of $\BH$ is an example of a
\JCstar -triple
(not necessarily a \JCstar -subtriple of
$\BH$)
\cite[Corollary 2.4]{StachoPP1982},
\cite{Kaup1984MathScand},
\cite{FRJFA1985}.
 Operator space structure of \JCstar -triples of this kind has
been investigated in the important series of articles
\cite{NealRussoTAMS03,NealRicardRusso06,NealRussoPAMS06} in the case
when $E$ is reflexive, equivalently, when $E$ is a finite
$\ell_\infty$-sum of reflexive Cartan factors. Cartan factors also
figure prominently in the study of completely contractively
complemented subspaces of Schatten spaces \cite{LeMerdyRicardRoydor}.

Our aim in this paper is to explore operator space structures of an
arbitrary \JCstar -triple. One of our aims is to place the ground-breaking
results of \cite{NealRussoTAMS03,NealRicardRusso06,NealRussoPAMS06}
into the general setting of a full theory. To this end we shall
initiate the study of certain universal objects associated with a
\JCstar -triple and, in particular, that of the universal TRO of a
\JCstar -triple. We shall show that this new device (which differs
from the above mentioned triple envelope --- see
Theorem~\ref{TripleEnvThm})
enables a systematic examination of the TROs generated by \JCstar
-triples and facilitates a general enquiry into their operator space
structures.
As a technical aid of possibly independent interest we shall also
introduce the notion of a reversible \JCstar -triple and shall apply
our general theory to obtain concrete identifications of the universal
TROs of all Cartan factors (those that arise as \JCstar -triples ---
the two exceptional Cartan factors of \JBstar-triple theory are not
\JCstar -triples) and we shall exhibit the possible operator space
structures of Cartan factors of rank greater than unity.

We shall tend to use \cite{ER,Pisier} as our standard references for
the theory of operator spaces (see also
\cite{Paulsen,BlecherleMerdy}). Briefly, we recall that an \emph{operator
space} is a complex Banach space $E$ together with a linear isometric
embedding of $E$ into some $\BH$ and that the corresponding operator
space structure on $E$ is determined by the matrix norms on $M_n(E)$
inherited from $M_n(\BH)$. A linear map between operator spaces, $\pi
\colon E \to F$, is said to be \emph{completely bounded} with completely
bounded norm given by $\|\pi\|_{cb} = \sup\{ \|\pi_n\|: n \geq 1 \}$ if
the latter is finite, where $\pi_n$ is the tensored map $\pi \otimes
I_n \colon M_n(E) \to M_n(F)$, for each $n$.
If $\|\pi\|_{cb} \leq 1$, then $\pi$ is said to be a \emph{complete
contraction} and to be a \emph{complete isometry} if every $\pi_n$  is
isometric.

As already noted TROs occupy a special place in operator space theory.
To expand upon the triple product of an abstract \JCstar-triple we
remark that the the results of \cite{Kaup} show that $\tp x y z$ on
$E$, as was defined above, is the unique product on $E$ that is
symmetric and bilinear in $x$ and $z$ and conjugate linear in $y$ for
which the operator $D(x, y)$ on $E$ given by $D(x,y)(z) = \tp x y z$
satisfies
\[
[ D(x,y), D(a, b) ] = D(\tp x y a, b) - D(a, \tp b x y),
\]
and such that $D(x, x)$ is a positive Hermitian operator in
$\mathcal{B}(E)$ with norm  $\|x\|^2$.
We further remark that the use of abstract \JCstar-triples in this
paper is a matter of convenience and that nothing essential would be
lost by exclusive concentration upon their concrete realisations. The
fundamental difference between \JCstar-triples and TROs is that the
underlying structure of the former is determined by their linear
isometric class while that of the latter is fixed up to complete
isometry.

We have gathered background material in the next section and start on
the new concepts, including universal TROs in
\S\ref{SectionUniversal}. We introduce the notion of universal
reversibility in \S\ref{SectionReversibility} as an aid to the
identification of the universal TROs and, in \S\ref{SectionCartanFs},
we bring our methods to bear upon Cartan factors.
One outcome of independent interest is a
classification of universally reversible Cartan factors
(Theorem~\ref{CharactRevCFs}).
In the final section
we show that the operator space structures of a \JCstar-triple arising
from its concrete realisations are completely determined by the
\emph{operator space ideals} (as we call them) of its universal TRO
and we exhibit links with injective and triple envelopes.
We apply this to elucidate a full description of the operator
space strucutures of all non Hilbertian Cartan factors, answering a
question raised in \cite{LeMerdyRicardRoydorv2}.

The Hilbertian case requires special consideration and we postpone
a detailed analysis to a forthcoming paper. In addition, we plan other
articles on developments of our ideas excluded from the present paper
such as exactness (as a functor) of the universal TRO, its behaviour
with respect to relevant tensor products and the introduction and use
of the weak*-TRO of a \JWstar-triple (a \JCstar-triple with a predual)
which shall also enable us to classify all universally reversible
\JCstar-triples and to analyse operator space structures of a wide
range of \JCstar-triples and \JWstar-triples.

The methods pioneered in \cite{NealRussoTAMS03},
and the techniques of \cite{LeMerdyRicardRoydorv2}, do not
readily extend from the cases of the finite rank Cartan factors for
which they were devised. In contrast, our univeral approach provides a
coherent general framework for treating any \JCstar-triple. In particular,
for Cartan factors we do not require triple embeddings to be
weak*-continuous nor the existence of norm dense grids of minimal
tripotents.

\section{Background}
\label{secBackground}

In this section, 
we have assembled preliminary material for later use.
General references for \JCstar-triples are the surveys
\cite{RussoSurvey,RodSurvey1994} and the papers
\cite{FRgn,Harris74,Harris81,Kaup} whilst
\cite{Zettl,BlecherleMerdy,ER, Paulsen} are helpful sources for TROs.
Another class of present relevance is that of \JCstar-algebras, the
norm closed subspaces of $C^*$-algebras which are closed under the
involution and Jordan product, $a \circ b = (1/2)(ab +ba)$.
\JCstar-algebras are the complexifications of $JC$-algebras, about which
\cite{HOS} is an extensive monograph.
Since the triple product on $\BH$ can be expressed
in terms of the Jordan product and $*$-operation, via
$\tp{x}{ y}{ z} = (x
\circ y^*) \circ z + (z \circ y^*) \circ x - (x \circ z) \circ y^*,
$
it follows that \JCstar -algebras are \JCstar -triples.

By an \emph{ideal} we will always mean a \emph{norm closed ideal}.
Thus an ideal of a \JCstar-triple or a TRO $E$  is a norm closed
subspace $I$ of $E$ for which, respectively, $\tp a b c$ or $\trop a b
c$ lies in $I$ whenever $a$, $b$ or $c$ belongs to $I$ (and $a,b,c \in
E$).
An ideal of a \JCstar-algebra $A$ is a norm closed subspace $I$ such
that $A \circ I \subset I$. The following removes any ambiguity.

\begin{lemma}[{\cite[Proposition 5.8]{Harris81}}]
The ideals of a TRO, \JCstar-algebra or $C^*$-algebra $E$ coincide
with the ideals of $E$ when it is regarded as a \JCstar-triple.
\end{lemma}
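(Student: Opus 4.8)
The plan is to establish the two inclusions separately, running the argument in parallel for a TRO and for a \JCstar-algebra; the $C^*$-algebra assertion is then the common specialisation, since a $C^*$-algebra is at once a TRO and a \JCstar-algebra. Throughout let $I$ denote a norm-closed subspace of $E$.

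One inclusion is immediate. If $I$ is a ternary ideal of the TRO $E$ and one of $a,b,c$ lies in $I$, then both $\trop{a}{b}{c}$ and $\trop{c}{b}{a}$ lie in $I$, whence so does their average $\tp{a}{b}{c}=\tfrac12(\trop{a}{b}{c}+\trop{c}{b}{a})$; thus $I$ is a triple ideal. The \JCstar-algebra case is identical upon expanding $\tp{x}{y}{z}$ in terms of the Jordan product $\op$ as recalled above, since $a\op b\in I$ whenever $a\in I$ or $b\in I$.

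The substance is the converse: a norm-closed triple ideal $I$ is a ternary (resp. Jordan, resp. two-sided) ideal. A first, elementary observation that isolates the difficulty is that $I$ is automatically closed under the obvious functional calculus: from $\tp{a}{a}{a}=aa^*a$ and, inductively, $\tp{a(a^*a)^{n-1}}{a}{a}=a(a^*a)^n$, one obtains $a\,p(a^*a)\in I$ for every polynomial $p$ with $p(0)=0$, hence $a\,f(a^*a)\in I$ for every continuous $f$ vanishing at $0$ (and symmetrically $g(aa^*)\,a\in I$). The genuine obstacle, however, is that a single triple product supplies only the \emph{symmetric} combination $\trop{a}{b}{c}+\trop{c}{b}{a}$, so an individual ternary product $\trop{a}{b}{c}$ cannot be isolated by any finite algebraic manipulation of triple products alone; this symmetrisation is the heart of the matter.

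To break the symmetry I would pass to the bidual. Since $E^{**}$ is a \JBWstar-triple whose separately weak*-continuous triple product extends the ternary (resp. Jordan) structure, making $E^{**}$ a $W^*$-TRO (resp. \JBWstar-algebra), the weak*-closure $I^{\perp\perp}$ of $I$ is a weak*-closed triple ideal of $E^{**}$. By the structure theory of \JBWstar-triples such an ideal is an $\ell^\infty$-summand, cut out by a central projection of the linking von Neumann algebra, and a summand of this kind is automatically invariant under the ambient ternary, Jordan and associative products. Thus $I^{\perp\perp}$ is an ideal of $E^{**}$ in the relevant sense, and since $I^{\perp\perp}\cap E=I$ by the bipolar theorem, the products $EE^*I$ and $IE^*E$ (and the analogue in the middle variable) lie in $I^{\perp\perp}\cap E=I$. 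Hence $I$ is a ternary ideal, with the Jordan and two-sided cases following in the same way. The main obstacle, as indicated, is precisely the symmetrisation built into the Jordan triple product, which is exactly why the argument is routed through the bidual, where triple ideals become central direct summands.
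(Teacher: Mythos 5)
Your proof must be judged on its own merits here, since the paper itself offers no argument for this lemma: it is quoted verbatim from Harris \cite[Proposition 5.8]{Harris81}, whose proof is a direct, norm-level one. Your architecture (pass to the bidual, where triple ideals become weak*-closed ideals of a JBW*-triple, then intersect back down with $E$) is a viable and genuinely different route, and the easy direction for TROs is fine. But the decisive step contains a circularity.

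The gap is the sentence asserting that, ``by the structure theory of JBW*-triples'', the weak*-closed triple ideal $I^{\perp\perp}$ of $E^{**}$ is an $\ell_\infty$-summand \emph{cut out by a central projection of the linking von Neumann algebra}. The citable structure theory (Horn, Barton--Timoney) gives only the first half: $E^{**}=I^{\perp\perp}\oplus_\infty J$ with $J$ a weak*-closed ideal, the two ideals being orthogonal because they intersect trivially. The second half --- that this \emph{Jordan}-triple summand is implemented by a central projection of the linking algebra, equivalently that $I^{\perp\perp}$ is a TRO ideal --- is exactly the weak*-form of the statement you are trying to prove; invoking it assumes the Jordan/ternary distinction (the ``symmetrisation'' you rightly identify as the heart of the matter) has already been broken. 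The gap is fillable by a short orthogonality argument that you need to supply. Since orthogonal elements of a \JCstar-subtriple of a $C^*$-algebra satisfy $x^*y=xy^*=0$, the set $K=\{m\in E^{**}: m^*y=my^*=0 \mbox{ for all } y\in J\}$ is a weak*-closed sub-TRO of $E^{**}$ containing $I^{\perp\perp}$; conversely, writing $m\in K$ as $m=x+y$ with $x\in I^{\perp\perp}$, $y\in J$, one gets $y\perp J$, hence $y^*y=0$ and $y=0$, so $K=I^{\perp\perp}$. Thus $I^{\perp\perp}$ is closed under $\trop{a}{b}{c}$, and expanding $ab^*c$ for $a,b\in E^{**}$, $c\in I^{\perp\perp}$ along the decomposition, all cross terms vanish by orthogonality, leaving $ab^*c\in I^{\perp\perp}(I^{\perp\perp})^*I^{\perp\perp}\subseteq I^{\perp\perp}$, and similarly on the other sides; only then does your final intersection step $ab^*c\in I^{\perp\perp}\cap E=I$ go through. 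Two smaller points: in the ``easy'' direction for \JCstar-algebras, when the ideal element occupies the \emph{middle} variable of $\tp{x}{y}{z}$ the expansion involves $y^*$, so you need closed Jordan ideals to be self-adjoint --- true, but a nontrivial classical fact, not the formality you suggest; and the converse direction for \JCstar-algebras (triple ideal implies $A\op I\subseteq I$) is never addressed --- it needs, e.g., the identity $a\op b=\tp{a}{1}{b}$ in $A^{**}$ followed by the same intersection argument.
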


The quotient of a \JCstar-triple by an ideal is again a \JCstar-triple
\cite[Theorem 2]{FRgn} and similarly for TROs \cite[Proposition
2.1]{Hamana99}.
Naturally, triple homomorphisms between \JCstar-triples and TRO
homomorphisms between TROs are, respectively, the linear maps
preserving the underlying triple and TRO products.
The image of a triple homomorphism (hence of a TRO homomorphism) is
norm closed.
The first part of the next statement was proved in \cite[Proposition
3.4]{Harris81}, \cite[Theorem 4]{Harris74} and the second part in
\cite[Proposition 
2.1]{Hamana99}.

\begin{lemma}
\label{TROmorphsAreCC}
\begin{enumerate}[(a)]
\item Between \JCstar-triples, triple homomorphisms are contractive,
injective triple homomorphisms are isometric and surjective linear
isometries are triple isomorphisms.
\item
Between TROs, TRO homomorphisms are completely contractive, 
injective TRO homomorphisms are completely isometric
and surjective complete 
isometries are TRO isomorphisms.
\end{enumerate}
\end{lemma}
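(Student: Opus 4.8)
The plan is to reduce both parts to the functional calculus for a single element, together with, in part (b), the observation that amplifications of TRO homomorphisms are again TRO homomorphisms.

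For part (a) I would begin from Kaup's spectral theorem: for $x$ in a \JCstar-triple $E$, the closed subtriple $E_x$ generated by $x$ is triple-isometrically isomorphic to $C_0(\Sp(x))$ for a set $\Sp(x) \subseteq (0,\|x\|]$ with $\|x\| = \max \Sp(x)$, under which $x$ corresponds to the inclusion function and the odd triple powers $x, \tp x x x, \ldots$ to the ordinary powers of that function. A triple homomorphism $\phi$ commutes with this odd functional calculus and hence carries $E_x$ onto the subtriple $E_{\phi(x)} \cong C_0(\Sp(\phi(x)))$; reading off the induced map of commutative $C^*$-algebras gives $\Sp(\phi(x)) \subseteq \Sp(x) \cup \{0\}$ and therefore $\|\phi(x)\| \le \|x\|$, proving contractivity. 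For an injective $\phi$ I would corestrict to its image, which is a closed subtriple by the preceding paragraph of the excerpt: the map $E \to \phi(E)$ is then a bijective triple homomorphism whose inverse is again a triple homomorphism, so both are contractive by the previous step and $\phi$ is isometric. Finally, that a surjective linear isometry $u \colon E \to F$ is automatically a triple isomorphism is Kaup's Banach--Stone theorem: the triple product of a \JCstar-triple is encoded in the biholomorphic structure of its open unit ball (a bounded symmetric domain), and an isometry of the balls is biholomorphic, hence preserves the triple product.

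For part (b) I would exploit that a TRO is a \JCstar-triple under $\tp a b c = \tfrac12(\trop a b c + \trop c b a)$ and that every TRO homomorphism is in particular a triple homomorphism. Given a TRO homomorphism $T \colon X \to Y$ between TROs $X \subseteq A$ and $Y \subseteq B$, each amplification $T_n = T \otimes \id_{M_n}$ is a TRO homomorphism between the TROs $M_n(X) \subseteq M_n(A)$ and $M_n(Y) \subseteq M_n(B)$, hence a triple homomorphism, hence contractive by part (a); as this holds for all $n$, $T$ is completely contractive. If $T$ is injective then so is each $T_n$, and part (a) makes each $T_n$ isometric, that is, $T$ is a complete isometry.

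The delicate point, and the main obstacle, is the last assertion of part (b): that a surjective complete isometry $u \colon X \to Y$ is a TRO isomorphism. Being an isometry, $u$ is a triple isomorphism by part (a), so it already preserves the symmetrised product $\tp a b c$; the difficulty is that this symmetric information cannot by itself separate $\trop a b c = ab^*c$ from $\trop c b a = cb^*a$, so one must genuinely use the complete-isometry hypothesis. Here I would pass to the linking $C^*$-algebras of $X$ and $Y$ and argue with $2 \times 2$ matrices: complete isometry controls the off-diagonal entries in $M_2$, and comparing the images of suitable matrices built from $a,b,c$ forces $u(\trop a b c) = \trop {u(a)}{u(b)}{u(c)}$. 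This matrix and linking-algebra argument is the substantive step, while everything preceding it is a formal consequence of the single-element functional calculus and of amplification.
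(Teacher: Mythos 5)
The paper itself does not prove this lemma: part (a) is quoted from \cite[Proposition 3.4]{Harris81} and \cite[Theorem 4]{Harris74}, and part (b) from \cite[Proposition 2.1]{Hamana99}, so your proposal has to be measured against those standard proofs. Much of it is on target: the functional-calculus argument for contractivity is the classical route; invoking Kaup's Banach--Stone theorem for surjective isometries is exactly what the paper's citation of \cite{Harris74} amounts to; and the reduction of the first two assertions of (b) to part (a) by amplification, together with the (correct) observation that each $T_n$ is again a TRO homomorphism between the TROs $M_n(X)$ and $M_n(Y)$, is the standard argument and is sound.

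The genuine gap is the final assertion of (b), which you rightly single out as the substantive step and then do not prove. Paulsen's off-diagonal technique applied to a surjective complete isometry $u \colon X \to Y$ and to its inverse does yield a unital complete order isomorphism between the Paulsen systems $\bigl(\begin{smallmatrix} \IC & X \\ X^* & \IC \end{smallmatrix}\bigr)$ and $\bigl(\begin{smallmatrix} \IC & Y \\ Y^* & \IC \end{smallmatrix}\bigr)$; but a unital complete order isomorphism of operator systems does not in general intertwine the multiplications of the $C^*$-algebras they generate, so no finite amount of ``comparing the images of suitable matrices built from $a,b,c$'' can by itself force $u(\trop a b c) = \trop{u(a)}{u(b)}{u(c)}$. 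Every known proof inserts a rigidity ingredient at precisely this point: Hamana's uniqueness and rigidity of the injective envelope and triple envelope (this is how \cite{Hamana99}, and the treatment in \cite{BlecherleMerdy}, argue), or Youngson's theorem \cite{Youngson83} on completely contractive projections, which is what makes the induced map of envelopes a $*$-isomorphism carrying corners to corners. Without such an ingredient, your sketch of the last claim is a restatement of the problem rather than a proof.

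There are also two repairable defects in (a). First, a circularity: you justify that $\phi(E)$ is a closed subtriple by the sentence preceding the lemma in the paper, but that sentence is itself a consequence of the lemma (one factors through $E/\ker\phi$ and applies ``injective implies isometric''), so it cannot be used here. The fix stays inside your own framework: if $\|\phi(x)\| < \|x\|$, choose $f \in C_0(\Sp(x))$ vanishing on $[0,\|\phi(x)\|]$ with $f(\|x\|) \neq 0$; then $f(x) \neq 0$ while $\phi(f(x)) = f(\phi(x)) = 0$ because $\Sp(\phi(x)) \subseteq (0,\|\phi(x)\|]$, contradicting injectivity. Second, the paper defines triple homomorphisms as bare linear maps preserving the product, with no continuity hypothesis; $\phi$ commutes automatically only with the odd \emph{polynomial} calculus, and your claim that $\phi$ carries the closed subtriple $E_x$ onto $E_{\phi(x)}$ already presupposes that $\phi$ is bounded. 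As written, your argument establishes that \emph{bounded} triple homomorphisms are contractive; with the paper's purely algebraic definition an automatic-continuity ingredient is still owed.
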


By a TRO \emph{antihomomorphism},
$\phi \colon T \to S$, between TROs we mean a linear map such that
$\phi(\trop a b c) = \trop{\phi(c)}{\phi(b)}{\phi(a)}$ whenever $a, b,
c \in T$. Typically these arise as restrictions of *-antihomomorphisms
between containing $C^*$-algebras.

Recall \cite{Kaup} that the \JCstar-subtriple generated by an element
$x$ is a \JCstar-triple $E$ is linearly isometric to an abelian
$C^*$-algebra, giving rise to a functional calculus and, in
particular, to the existence of a `cube root' $y \in E$ such that
$x = \tp y y y$. Elements $a, b \in E$ are said to be orthogonal if
$\tp a a b = 0$, a condition equivalent to $a^*b = a b^* = 0$ when $E$
is realised as  a \JCstar-subtriple of a $C^*$-algebra \cite[p.
18]{Harris74}, implying $\|a + b\| = \max(\|a\|, \|b\|)$.
Thus ideals $I$ and $J$ of $E$ are orthogonal if and only if $I \cap J
= \{0\}$, in which case $I+J$ is an $\ell_\infty$ sum.

If $E$ is a subset of a $C^*$-algebra we write
$\TRO(E)$ for the TRO generated by $E$: it
the closed linear span of
elements of the form
$x_1 x_2^* x_3 \ldots x_{2n}^* x_{2n+1}$ 
where the $x_i \in E$ and $n \geq 0$.
We note that:

\begin{lemma}
\label{MsummandsProp}
If $E$ and $F$ are orthogonal subsets of a $C^*$-algebra, then
$\TRO(E)$ and $\TRO(F)$ are orthogonal ideals of $\TRO(E+F) = \TRO(E)
+ \TRO(F)$, which is an $\ell_\infty$ sum
(where $E +F = \{ a + b : a \in E, b \in F\}$).
\end{lemma}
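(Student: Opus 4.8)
The plan is to reduce the whole statement to one algebraic fact: the orthogonality of $E$ and $F$ propagates to $\TRO(E)$ and $\TRO(F)$. Here orthogonality of the subsets means $a^*b = ab^* = 0$ (equivalently $\tp{a}{a}{b}=0$) for all $a\in E$, $b\in F$. Once I know $\TRO(E)$ and $\TRO(F)$ are orthogonal in the ambient $C^*$-algebra, every remaining assertion drops out of routine triple-product expansions together with the fact recorded above that orthogonal ideals of a \JCstar-triple meet in $\{0\}$ and sum to an $\ell_\infty$ sum.

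First I would show that $u^*v = uv^* = 0$ for all $u\in\TRO(E)$ and $v\in\TRO(F)$. Since each of $\TRO(E)$, $\TRO(F)$ is the closed linear span of words $x_1x_2^*\cdots x_{2n+1}$ with entries in $E$ (respectively $F$), and since multiplication and the adjoint are norm continuous, it is enough to treat two such words $w = x_1x_2^*\cdots x_{2n+1}$ (all $x_i\in E$) and $w' = y_1y_2^*\cdots y_{2m+1}$ (all $y_j\in F$). In $w^*w'$ the two inner factors meet in the product $x_1^*y_1$, which vanishes by orthogonality, so $w^*w' = 0$; in $w(w')^*$ the factors meet in $x_{2n+1}\,y_{2m+1}^* = 0$, so $w(w')^* = 0$. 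Passing to linear combinations and norm limits gives the claim, and taking adjoints yields all four relations $u^*v = uv^* = v^*u = vu^* = 0$.

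Next I would feed this into the basic triple-product expansion. For $s_i\in\TRO(E)$ and $t_i\in\TRO(F)$ one computes
\[
\trop{s_1+t_1}{s_2+t_2}{s_3+t_3} = \trop{s_1}{s_2}{s_3} + \trop{t_1}{t_2}{t_3},
\]
because every mixed term contains one of the vanishing junctions found above. This single identity does most of the work: it shows that $\TRO(E)+\TRO(F)$ is closed under the ternary product, and hence, being a norm-closed $\ell_\infty$ sum of the two closed spaces, is itself a TRO; and it shows that whenever any one argument lies in $\TRO(E)$ the value lies in $\TRO(E)$, so that $\TRO(E)$, and by symmetry $\TRO(F)$, is an ideal of $\TRO(E)+\TRO(F)$. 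Since $\TRO(E)+\TRO(F)$ is then a TRO containing $E+F$, we get $\TRO(E+F)\subseteq\TRO(E)+\TRO(F)$, while the reverse inclusion is immediate from $E,F\subseteq E+F$; hence $\TRO(E+F)=\TRO(E)+\TRO(F)$.

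Finally, orthogonality forces $\TRO(E)\cap\TRO(F)=\{0\}$, so $\TRO(E)$ and $\TRO(F)$ are orthogonal ideals of $\TRO(E+F)$, and the remark recorded earlier (orthogonal ideals with trivial intersection sum to an $\ell_\infty$ sum) completes the proof. The only genuine work is the word computation of the second paragraph, and the point to be careful about there is that orthogonality is invoked only at the single innermost or outermost junction of each product; this is exactly what makes the vanishing uniform in the word length and lets the continuity reduction from arbitrary TRO elements to words go through cleanly.
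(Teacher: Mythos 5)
The paper offers no proof of this lemma at all (it is prefaced by ``We note that:''), so there is no argument of the authors to compare against; your proposal supplies the routine verification that is evidently intended, and its core is sound. The propagation of orthogonality to words (only the single junction $x_1^*y_1$, respectively $x_{2n+1}y_{2m+1}^*$, matters, so the vanishing is uniform in word length), the extension to $\TRO(E)$ and $\TRO(F)$ by linearity and continuity, the splitting of triple products, the resulting ideal property, and the closedness of $\TRO(E)+\TRO(F)$ (via $\|s+t\|=\max(\|s\|,\|t\|)$ for orthogonal elements, so Cauchy sequences split componentwise) are all correct.

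There is, however, a genuine gap in your final step, and it is in fact a gap in the statement itself. You obtain the reverse inclusion $\TRO(E)+\TRO(F)\subseteq\TRO(E+F)$ from ``$E,F\subseteq E+F$'', but that inclusion requires $0\in E$ and $0\in F$, which the hypothesis (orthogonal \emph{subsets}) does not provide. Without it the step --- and indeed the lemma as literally stated --- fails: take $E=\{e_{12}\}$ and $F=\{e_{34}\}$ in $M_4(\IC)$ (matrix units). These are orthogonal, and $u=e_{12}+e_{34}$ is a partial isometry, so every word in $u$ equals $u$ and $\TRO(E+F)=\IC u$ is one-dimensional, whereas $\TRO(E)+\TRO(F)=\IC e_{12}+\IC e_{34}$ is two-dimensional; in particular $\TRO(E)$ is not even contained in $\TRO(E+F)$, so it cannot be an ideal of it. In the paper's only application of the lemma (Proposition~\ref{MsummandsTheorem}) the sets concerned are images of ideals under linear maps, hence subspaces containing $0$, and there your argument goes through verbatim. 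So your proof is valid in exactly the generality in which the statement is true; the fix is to make the hypothesis $0\in E\cap F$ (or: $E$ and $F$ linear subspaces) explicit at the point where you invoke $E,F\subseteq E+F$.
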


For a sub-TRO, $T$, of a $C^*$-algebra we denote
the $C^*$-subalgebra of $\BH$ generated by
$\{ab^*: a, b \in T\}$ and $\{a^*b: a, b \in T\}$
by $\mathscr{L}_T$ and $\mathscr{R}_T$ respectively.
By definition $\mathscr{L}_T T, T \mathscr{R}_T \subseteq T$
and so, taking `cube roots' $\mathscr{L}_T T = T = T \mathscr{R}_T$.

A tripotent in a
\JCstar -triple $E$ is an element $u$ such that $u  =\tp{u}{ u}{ u }$
(a partial isometry if $E$ is realised as a \JCstar-subtriple of a
$C^*$-algebra).
There is an elaborate Peirce decomposition theory associated with a
tripotent $u$ in a \JCstar-triple $E$. For our present purposes we
should recall that the Peirce 2-space $\tp u {\tp u E u} u$, denoted
by $E_2(u)$, is a \JCstar-algebra with identity $u$ and with
involution and Jordan product defined by
\[
a^{\#} = \tp u a u , \quad a \circ b = \tp a u b.
\]
Concretely, when $u$ is a partial isometry in a \JCstar-subtriple $E$
of a TRO $T$ in $\BH$, putting $e = uu^*$ and $f = u^*u$, we have
$T_2(u) = eTf$ is a $C^*$-algebra \cite[p. 120]{Zettl} with
involution and product given by
\[
a^{\#} = \trop u a u , \quad a \bullet b = \trop a u b
\]
containing $E_2(u) = eEf$ as a \JCstar-subalgebra, and we note that
\[
a \bullet b^{\#} \bullet c = a b^*c = \trop a b c
\qquad \mbox{for all } a, b, c \in T_2(u).
\]

The following is a straightforward consequence.

\begin{lemma}
\label{TROPeirce2}
Let $E$, $T$, $u$, $e$ and $f$ be as in the above paragraph.
\begin{enumerate}[(a)]
\item If $\pi \colon A \to T_2(u)$ is a *-homomorphism of
$C^*$-algebras, then $\pi \colon A \to T$ is a TRO homomorphism.

\item The maps $x \mapsto u^* x$ and $x \mapsto x u^*$ are
*-isomorphisms from the $C^*$-algebra $T_2(u)$ onto $C^*$-algebras of 
$f \mathscr{R}_T f$
and
$e \mathscr{L}_T e$, respectively.
\end{enumerate}
\end{lemma}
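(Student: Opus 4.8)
The plan is to deduce both parts from the single identity $a \bullet b^{\#} \bullet c = ab^*c = \trop a b c$ (valid for $a,b,c \in T_2(u)$) together with the elementary Peirce relations $e = uu^*$, $f = u^*u$, $u^*e = fu^*$, $fu^* = u^*e$, and the observation that every $a \in eTf$ satisfies $fa^* = a^* = a^*e$ (immediate from $a = eaf$).

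For (a), I would let $\pi \colon A \to T_2(u)$ be a *-homomorphism, where $T_2(u)$ is equipped with the involution $\#$ and product $\bullet$, and view $A$ as a TRO under $\trop a b c = ab^*c$. For $a,b,c \in A$, since $\pi$ preserves products and involution I get $\pi(\trop a b c) = \pi(a) \bullet \pi(b)^{\#} \bullet \pi(c)$. Applying the displayed identity to the elements $\pi(a), \pi(b), \pi(c) \in T_2(u)$ rewrites the right-hand side as $\trop{\pi(a)}{\pi(b)}{\pi(c)}$ computed with the ambient operations of $T$. Hence $\pi$ is a TRO homomorphism into $T$; this part is essentially immediate.

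For (b), I would treat $L \colon x \mapsto u^*x$ on $T_2(u) = eTf$ and argue symmetrically for $R \colon x \mapsto xu^*$. First I would record that $L$ maps into $f\mathscr{R}_T f$: since $u, x \in T$ we have $u^*x \in \mathscr{R}_T$, while $u^*e = fu^*$ and $x = exf$ give $u^*x = fu^*xf$. Injectivity is clear, as $u^*x = 0$ forces $ex = x = 0$. Multiplicativity is the one-line check $L(a \bullet b) = u^*(au^*b) = (u^*a)(u^*b) = L(a)L(b)$, and the involution is preserved because $L(a^{\#}) = u^*(ua^*u) = fa^*u = a^*u = L(a)^*$, using $fa^* = a^*$. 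Thus $L$ is an injective *-homomorphism of $C^*$-algebras, and since such maps are isometric with closed, self-adjoint range, $L(T_2(u))$ is a $C^*$-subalgebra of $f\mathscr{R}_T f$ onto which $L$ is a *-isomorphism. The map $R$ is handled the same way, using $fu^* = u^*e$ and $a^*e = a^*$ to land in, and be *-isomorphic onto, a $C^*$-subalgebra of $e\mathscr{L}_T e$.

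I expect no serious obstacle here, as the computations are routine; the only points needing care are the correct bookkeeping of the Peirce relations (in particular verifying that the $\#$-involution passes to the ordinary adjoint under $L$ and $R$) and the appeal to the standard fact that an injective *-homomorphism between $C^*$-algebras is isometric, which is what guarantees that the images are genuine $C^*$-subalgebras rather than merely *-subalgebras.
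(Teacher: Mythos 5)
Your proposal is correct and is precisely the ``straightforward consequence'' the paper intends: the paper gives no written proof beyond the preamble, and your argument---deducing (a) directly from the identity $a \bullet b^{\#} \bullet c = ab^*c$ and verifying (b) by the routine Peirce computations with $u^*e = fu^*$, $fu^* = u^*e$ and $fa^* = a^* = a^*e$, plus the standard fact that injective $*$-homomorphisms are isometric---is exactly the omitted argument.
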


For a Banach space $E$, we use $E^{**}$ for its bidual, and we
consider $E \subseteq E^{**}$ canonically.

\begin{lemma}
\label{TROofAlgIsAlg}
Let $A$ be a \JCstar-subalgebra of $\BH$. Then $\TRO(A)$ is a
$C^*$-subalgebra.
\end{lemma}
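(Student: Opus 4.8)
The plan is to show that $\TRO(A)$ is closed under the associative product of $\BH$; since it is norm-closed by definition and, because $A=A^*$, manifestly self-adjoint (the adjoint of a generator $x_1x_2^*\cdots x_{2n+1}$ is again a generator of the same form), this identifies it as a $C^*$-subalgebra of $\BH$ --- indeed as $C^*(A)$. Using $A=A^*$ once more, $\TRO(A)$ is precisely the closed linear span of the \emph{odd words} $a_1a_2\cdots a_{2k+1}$ with all $a_i\in A$. As the product of two odd words is an \emph{even word} $a_1\cdots a_{2k}$, the task reduces to proving that every even word lies in $\TRO(A)$.

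The device I would use is an approximate identity living inside $A$. Being a (possibly non-unital) \JBstar-algebra, $A$ carries an increasing approximate identity $(u_\lambda)$ with $u_\lambda\in A$, $0\le u_\lambda$, $\|u_\lambda\|\le1$ and $\|u_\lambda\circ a-a\|\to0$ for every $a\in A$ \cite{HOS}. First I would upgrade this Jordan approximate identity to a two-sided approximate identity for the associative product, i.e.\ $\|u_\lambda a-a\|\to0$ for all $a\in A$. For self-adjoint $a$ this follows from the estimate
\[
\|a-u_\lambda a\|^2=\|(a-u_\lambda a)^*(a-u_\lambda a)\|=\|a^2-2a u_\lambda a+a u_\lambda^2 a\|\le\|a^2-a u_\lambda a\|,
\]
where the inequality uses $u_\lambda^2\le u_\lambda$ (so that $a u_\lambda^2 a\le a u_\lambda a$), together with the quadratic Jordan identity
\[
a u_\lambda a=2\,a\circ(a\circ u_\lambda)-a^2\circ u_\lambda,
\]
valid in every special Jordan algebra: since $a\circ u_\lambda\to a$ and $a^2\circ u_\lambda\to a^2$, continuity of $a\circ(\,\cdot\,)$ gives $a u_\lambda a\to 2a^2-a^2=a^2$, whence the right-hand side above tends to $0$. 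Linearity together with $A=A^*$ then yields $\|u_\lambda a-a\|\to0$ and $\|a u_\lambda-a\|\to0$ for every $a\in A$.

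With this in hand the conclusion is immediate: for an even word,
\[
a_1\cdots a_{2k}=\lim_\lambda a_1\cdots a_{2k-1}\,u_\lambda\, a_{2k},
\]
and each approximant $a_1\cdots a_{2k-1}u_\lambda a_{2k}$ is an odd word (of length $2k+1$) in $A$, hence lies in the norm-closed space $\TRO(A)$; therefore so does the even word. This makes $\TRO(A)$ multiplicatively closed and completes the argument, with $\TRO(A)=C^*(A)$. The main obstacle is the middle step: passing from the Jordan/order data of the approximate identity to genuine two-sided convergence in the associative product. This is exactly where the positivity $u_\lambda^2\le u_\lambda$ and the quadratic identity expressing $a u_\lambda a$ through $\circ$ do the real work; the remainder is bookkeeping about parity of word lengths.
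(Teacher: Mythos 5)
Your proof is correct, but it takes a genuinely different route from the paper's. The paper argues in the bidual: with $B$ the $C^*$-algebra generated by $A$ in $\BH$, it notes $A^{**}\subseteq \TRO(A)^{**}\subseteq B^{**}$, that $A^{**}$ is a unital \JBWstar-algebra whose unit is also the unit of $B^{**}$ (because $A^{**}$ generates $B^{**}$ as a $W^*$-algebra), so the TRO $\TRO(A)^{**}$ contains the identity and is therefore a $C^*$-subalgebra of $B^{**}$; intersecting back down, $\TRO(A)=\TRO(A)^{**}\cap B$ is a $C^*$-subalgebra of $B$. You instead stay entirely at the norm level, replacing the honest unit of $A^{**}$ by an increasing approximate identity $(u_\lambda)$ of the JB-algebra $A_{sa}$, which you upgrade to a two-sided approximate identity for the associative product via the estimate $\|a-u_\lambda a\|^2\le\|a^2-au_\lambda a\|$ and the identity $au_\lambda a=2a\circ(a\circ u_\lambda)-a^2\circ u_\lambda$; even words are then norm limits of odd words. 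Both proofs exploit the same underlying mechanism --- a (possibly approximate) unit of $A$ glues two odd words back into $\TRO(A)$ --- but the paper realises it exactly in the bidual, at the cost of weak* machinery and the unitality of \JBWstar-algebras, whereas you realise it approximately inside $A$ itself, at the cost of the approximate-identity theorem for JB-algebras and a more delicate $C^*$-estimate. Your version is more elementary and also yields explicitly that $\TRO(A)=C^*(A)$; the paper's is shorter. Two points you should make explicit: the norm comparison is licensed by positivity of both sides, namely $0\le(a-u_\lambda a)^*(a-u_\lambda a)\le a(1-u_\lambda)a$; and $a^2=a\circ a\in A$ is needed so that the Jordan approximate identity applies to $a^2$ as well as to $a$.
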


\begin{proof}
Put $T = \TRO(A)$ and let $B$ be the $C^*$-subalgebra generated by $A$
in $\BH$. 
Then $A^{**} \subseteq T^{**} \subseteq B^{**}$ so that the TRO
$T^{**}$ contains the common identity of $A^{**}$ and $B^{**}$
($A^{**}$ has an identity and
generates $B^{**}$ as a $W^*$-algebra).
Hence $ T^{**}$ is a $C^*$-subalgebra of  $B^{**}$ implying that $T =
T^{**} \cap B$ is a $C^*$-subalgebra of  $B$.
\end{proof}

\begin{proposition}
\label{JCStAlgSubtriple}
Let $E$ be a \JCstar-subtriple of $\BH$ where $E$ is linearly
isometric to a \JCstar-algebra. Then there is a partial isometry $u
\in \BH$ such that $u^* \TRO(E)$ is a $C^*$-subalgebra of $\BH$ and
$\TRO(E) \to u^* \TRO(E)$ is a TRO isomorphism. If $E$ is weakly
closed, then $u$ can be chosen in $E$.
\end{proposition}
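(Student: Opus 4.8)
The plan is to manufacture the partial isometry $u$ from a \emph{unitary tripotent} (a tripotent $v$ with $\tp{v}{v}{x}=x$ for all $x$) sitting in a unital Jordan algebra, and then to exploit the identity $\trop{a}{b}{c}=ab^*c$ together with Lemma~\ref{TROofAlgIsAlg} and Lemma~\ref{TROPeirce2}. The starting point is that, by the uniqueness of the triple product \cite{Kaup}, the given linear isometry of $E$ onto a \JCstar-algebra is a triple isomorphism; hence $E^{**}$ is triple isomorphic to the bidual of a \JCstar-algebra, which is a (unital) \JBWstar-algebra. Consequently $E^{**}$ carries a unitary tripotent $u_0$, namely the Jordan unit, so that $E^{**}=E^{**}_2(u_0)$. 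Writing $M=\BH^{**}$, $e_0=u_0u_0^*$ and $f_0=u_0^*u_0$, the relation $\tp{u_0}{u_0}{x}=x$ forces $e_0x=x=xf_0$ for every $x\in E^{**}$, whence $E\subseteq e_0Mf_0$ and, since $e_0Mf_0$ is a sub-TRO of $M$, also $T:=\TRO(E)\subseteq e_0Mf_0$.

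First I would dispose of the weakly closed case, where the argument is direct and delivers the sharper conclusion $u\in E$. If $E$ is weakly closed it is a \JWstar-triple linearly isometric to a \JWstar-algebra, whose Jordan unit $u$ then lies in $E$ and is a unitary tripotent with $E=E_2(u)$. Putting $e=uu^*$ and $f=u^*u$, the same computation as above gives $et=t=tf$ on every generating monomial of $T$, so $T=eTf=T_2(u)$. Lemma~\ref{TROPeirce2}(b) now applies verbatim: $x\mapsto u^*x$ is a $*$-isomorphism of the $C^*$-algebra $T_2(u)=T$ onto a $C^*$-subalgebra of $f\mathscr{R}_Tf$, so $u^*\TRO(E)$ is a $C^*$-subalgebra and $\TRO(E)\to u^*\TRO(E)$ is a (thus TRO) isomorphism.

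In general $u_0$ lives in $M=\BH^{**}$ rather than in $\BH$, and the main obstacle is precisely to descend to an honest partial isometry of $\BH$. Here I would invoke the canonical normal $*$-homomorphism $\pi\colon\BH^{**}\to\BH$ restricting to the identity on $\BH$ (available because $\BH$ is a von Neumann algebra) and set $u=\pi(u_0)$, a partial isometry in $\BH$. Applying $\pi$ to the identity $e_0t=t$ for $t\in T\subseteq\BH$ yields $uu^*t=t$, so $x\mapsto u^*x$ is injective and completely isometric on $T$; and the computation $(u^*a)(u^*b)^*(u^*c)=u^*ab^*(uu^*)c=u^*\trop{a}{b}{c}$ shows it is a TRO homomorphism, hence (by Lemma~\ref{TROmorphsAreCC}(b)) a TRO isomorphism of $\TRO(E)$ onto $u^*\TRO(E)$.

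It remains to identify $u^*\TRO(E)$ as a $C^*$-subalgebra, which I would first establish upstairs in $M$ and then transport down through $\pi$. The map $x\mapsto u_0^*x$ is a TRO isomorphism on $e_0Mf_0$ (same monomial computation, using $e_0c=c$), and $u_0^*E$ is a concrete \JCstar-subalgebra of $M$: it is self-adjoint because $(u_0^*x)^*=x^*u_0=u_0^*\tp{u_0}{x}{u_0}$ while $E$ is stable under the Jordan involution $x\mapsto\tp{u_0}{x}{u_0}$, and it is stable under the Jordan product since $\tfrac12\bigl((u_0^*x)(u_0^*y)+(u_0^*y)(u_0^*x)\bigr)=u_0^*\tp{x}{u_0}{y}$. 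By Lemma~\ref{TROofAlgIsAlg}, $u_0^*\TRO(E)=\TRO(u_0^*E)$ is a $C^*$-subalgebra of $M$, and applying the $*$-homomorphism $\pi$ gives $u^*\TRO(E)=\pi\bigl(u_0^*\TRO(E)\bigr)$, a $C^*$-subalgebra of $\BH$. The crux throughout is exactly this passage from the abstract Jordan unit of the bidual to a partial isometry of $\BH$, which the normal $*$-homomorphism $\pi$ resolves; everything else is routine verification of the TRO and Jordan identities.
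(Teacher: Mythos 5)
Your proof is correct and is essentially the paper's own argument: your $u=\pi(u_0)$ coincides with the paper's $u=\psi(1)$, because the weak*-continuous extension $\psi\colon A^{**}\to\BH$ used there factors precisely as the canonical normal $*$-homomorphism $\BH^{**}\to\BH$ composed with the bidual of the isometry, and both proofs then verify the same identities ($uu^*x=x=xu^*u$ on $\TRO(E)$, $u^*E$ a \JCstar-subalgebra) and conclude via Lemma~\ref{TROofAlgIsAlg}. The only cosmetic differences are that you carry out the \JCstar-algebra verification upstairs in $\BH^{**}$ before pushing down, and that you treat the weakly closed case separately through Lemma~\ref{TROPeirce2}(b), where the paper obtains it directly from the fact that $\psi(A^{**})$ lies in the weak closure of $E$.
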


\begin{proof}
Let $\phi \colon A \to E$ be a surjective linear isometry, hence
a triple isomorphism, where
$A$ is a \JCstar-algebra and let $\psi \colon A^{**} \to \BH$ be the
weak*-continuous extension of $\pi$.
Then $\psi$ is a triple homomorphism and letting $u = \psi(1)$, where
1 is the identity of $A^{**}$, we have $uu^* x u^* u = x$ implying
$uu^* x = x u^*u$, for all $x \in E$, and hence for all $x \in
\TRO(E)$.
By direct calculation, $u^* E$ is a \JCstar-subalgebra of $\BH$ and
$\TRO(u^*E) = u^* \TRO(E)$, which is a $C^*$-subalgebra of $\BH$ in
view of Lemma~\ref{TROPeirce2}. The final statement follows from the
fact that the weak closure of $E$ contains $\psi(E^{**})$.
\end{proof}

If $e$ is a projection in a von Neumann algebra $W$, the weak*-closure
of $WeW$ is the weak*-closed ideal of $W$ generated by $e$ and equals
$W c(e)$ where $c(e)$ is the central cover of $e$.
Given projections $e$ and $f$ in $W$ we use $e \sim f$ to indicate von
Neumann equivalence, that is, that $e = u^*u$, $f = uu^*$ for some
(partial isometry) $u \in W$.

\begin{corollary}
The following are equivalent for a non-zero projection $e$ in a von
Neumann algebra $W$ with $c(e) = 1$
\begin{enumerate}[(a)]
\item $We$ is linearly isometric to a \JCstar-algebra;
\item $e \sim 1$;
\item $We$ is TRO isomorphic to $W$;
\item $We$ is TRO isomorphic to $eW$.
\end{enumerate}
\end{corollary}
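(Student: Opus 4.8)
The plan is to run the cycle (c)$\Rightarrow$(a)$\Rightarrow$(b)$\Rightarrow$(c), with (a)$\Rightarrow$(b) as the substantial step, and then to pick up (d) by adjoining the symmetric construction to (b); I expect the real difficulty to be closing the cycle back from (d). Throughout I record the standing observations that $We=\{x\in W: x(1-e)=0\}$ is a weak$^*$-closed sub-TRO of $W\subseteq\BH$, hence a \JCstar-subtriple, that $e\in We$, and symmetrically for $eW$. The implication (c)$\Rightarrow$(a) is immediate: a TRO isomorphism is in particular a linear isometry by Lemma~\ref{TROmorphsAreCC}, and $W$, being a $C^*$-algebra, is a \JCstar-algebra, so $We$ is linearly isometric to a \JCstar-algebra.

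For (a)$\Rightarrow$(b) I would apply Proposition~\ref{JCStAlgSubtriple} to the weakly closed \JCstar-subtriple $We$, obtaining a partial isometry $u\in We$ for which $\TRO(We)=We\to u^*We$ is a TRO isomorphism onto a $C^*$-algebra. Since $u$ is the image of the identity (see the proof of Proposition~\ref{JCStAlgSubtriple}), it is a unitary tripotent of $We$, so $We=(We)_2(u)=p(We)q$ with $p=uu^*$ and $q=u^*u$, by the Peirce-$2$ description preceding Lemma~\ref{TROPeirce2}. As $u=ue$ we get $q=u^*u\le e$. The key computation then applies $x=pxq$ to $x=e\in We$: using $eq=q$ gives $e=peq=pq$, whence on taking adjoints $pq=qp=e=p\wedge q$; combined with $q\le e$ this forces $q=e$ and $e\le p$. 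Consequently $x=pxq=pxe=px$ for every $x\in We$, i.e. $(1-p)We=0$, so $(1-p)WeW=0$ and, passing to weak$^*$-closures, $(1-p)Wc(e)=0$. Since $c(e)=1$ this yields $p=1$, and therefore $e=u^*u\sim uu^*=1$, which is (b).

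For (b)$\Rightarrow$(c) and (b)$\Rightarrow$(d), write $e=u^*u$ and $1=uu^*$. A direct check shows $w\mapsto wu$ is a bijection of $W$ onto $We$ satisfying $(w_1w_2^*w_3)u=(w_1u)(w_2u)^*(w_3u)$, hence a TRO isomorphism $W\cong We$, giving (c); symmetrically $w\mapsto u^*w$ is a TRO isomorphism $W\cong eW$, and composing gives $We\cong W\cong eW$, which is (d). These are routine module computations whose only inputs are $uu^*=1$ and $u^*u=e$.

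It remains to deduce (a) (equivalently, a unitary tripotent in $We$) from (d), and this is where I expect the main obstacle. Merely extending the TRO isomorphism $\Theta\colon We\to eW$ to a $*$-isomorphism of the linking $C^*$-algebras $\mathscr{L}$, $\mathscr{R}$ is \emph{not} enough, since it yields only an abstract isomorphism $eWe\cong\overline{WeW}$, which does not force $e\sim 1$ (already for a $\mathrm{II}_1$ factor and a projection $e$ of trace $1/2$). The point is that the adjoint $x\mapsto x^*$ identifies $We$ with $eW$ as \JCstar-triples isometrically but \emph{not} completely, so hypothesis (d) is precisely the assertion that the column-type space $We$ and the row-type space $eW$ carry the same operator-space structure -- a rigidity that fails for nontrivial $e$. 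The plan is to exploit this matricial information, through $\Theta$, to manufacture a unitary tripotent in $We$ and so reach (a); the hard part will be to control the resulting support projections \emph{inside} the ambient $W$ rather than merely up to abstract isomorphism, that is, to use the genuinely spatial (Murray--von Neumann) content of a TRO isomorphism which the $C^*$-algebra isomorphism alone discards.
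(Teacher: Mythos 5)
Your cycle (c)$\Rightarrow$(a)$\Rightarrow$(b)$\Rightarrow$(c) and the implication (b)$\Rightarrow$(d) are all correct, and your (a)$\Rightarrow$(b) is essentially the paper's own argument: the same appeal to Proposition~\ref{JCStAlgSubtriple}, the same partial isometry $u\in We$ with $uu^*xu^*u=x$ on $We$, the same conclusions $u^*u=e$ and $uu^*=1$ forced by $c(e)=1$. The genuine gap is that condition (d) is never linked back to the other three: your final paragraph is a programme, not a proof, so the four statements are not shown to be equivalent. Moreover, the difficulty you anticipate there is illusory --- no operator-space rigidity, no linking $C^*$-algebras, no matricial information is needed.

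The paper's (d)$\Rightarrow$(b) is three lines, and it is the same central-cover trick you already used, applied now to the image of the tripotent $e$. Let $\pi\colon We\to eW$ be a TRO isomorphism and set $u=\pi(e)$; since $e=\trop{e}{e}{e}$, $u$ is a partial isometry, and $u\in eW$ gives $u=eu$, whence the projection $uu^*=e(uu^*)e$ satisfies $uu^*\le e$. For $x\in We$ we have $x=\trop{x}{e}{e}$, so $\pi(x)=\trop{\pi(x)}{u}{u}=\pi(x)u^*u$; surjectivity of $\pi$ gives $eW=eW(u^*u)$, hence $WeW=WeW(u^*u)$, and taking weak*-closures $W=Wc(e)=W(u^*u)$, so $u^*u=1$. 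Thus $1=u^*u\sim uu^*\le e\le 1$, and the Schr\"oder--Bernstein theorem for projections in a von Neumann algebra yields $e\sim 1$. (The paper states this step with the slip ``$u^*u\le e$'' where $uu^*\le e$ is meant.) Incidentally, your side remark that (d) expresses a rigidity ``that fails for nontrivial $e$'' is off the mark: by the corollary, (d) holds for every $e\sim 1$, for instance any projection $e$ in $\BH$ with $\dim(eH)=\dim(H)$ infinite; the column/row phenomenon you cite only shows that (d) fails when $e\not\sim 1$, which is exactly what the equivalence asserts.
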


\begin{proof}
(a) $\Rightarrow$ (b). Assume (a). By
Proposition~\ref{JCStAlgSubtriple} there is a partial isometry $u \in
We$ such that $u^*We = u^*uWe$
is a $C^*$-subalgebra of $W$,
giving
$u^*u= e$, and $\pi \colon We \to u^*We$ ($x \mapsto u^* x$)
is a TRO isomorphism. Since $\pi(xe) = \pi (uu^*xe)$ for $x \in W$ we
have $We = uu^* We$, thus $WeW  = u^*u WeW$ and so $u^*u = 1$ since
$c(e) = 1$. Therefore $e \sim 1$.

(d) $\Rightarrow$ (b). Let $\pi \colon We \to eW$ be a TRO isomorphism
and put $\pi(e) = u$. Given $x \in We$, $\pi(x) = \pi(x) u^* u$ so that
$WeW = WeW(u^*u)$ and thus $1 = u^*u$ (since $c(e) = 1$).
Since $u^* u \leq e$, we have $e \sim 1$.

The implications (b) $\Rightarrow$ (c) $\Rightarrow$ (a) and (b)
$\Rightarrow$ (d) are clear.
\end{proof}

Below, if $S$ is a subset of a $C^*$-algebra $A$, $S^{\#}$ denotes
$\{ x^* : x \in S \}$ and $\langle S \rangle$ denotes the (norm
closed) ideal of $A$ generated by $S$. Thus $\langle S \rangle =
\langle S^{\#} \rangle$ and coincides with the norm closure of $ASA$.

\begin{proposition}
\label{PropTROIdeals}
Let $e$ and $f$ be projections in a $C^*$-algebra $A$.
\begin{enumerate}[(a)]
\item If $I$ is an ideal in the TRO $eAf$, then $I =
e \langle I \rangle f = 
\langle I \rangle \cap e A f$;

\item if $J$ is an ideal of $A$, then $\langle e J e \rangle  =
\langle e J \rangle = \langle J e \rangle$;

\item there is a bijective correspondence between the ideals of $eA$
and of $eAe$ given by $I \leftrightarrow Ie$.
\end{enumerate}
\end{proposition}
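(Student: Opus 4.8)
The plan is to dispose of (b) first, since it is purely $C^*$-algebraic and the corner-sandwich manipulations it encodes will be reused in (a) and (c); then to reduce \emph{both} (a) and (c) to a single module property, namely that an ideal $I$ of a TRO of the form $eAf$ (resp. $eA$) absorbs multiplication by the corner algebras. For (b), the equality $\langle eJ\rangle=\langle Je\rangle$ is immediate from the stated $\langle S\rangle=\langle S^{\#}\rangle$ together with $(eJ)^{\#}=J^{\#}e=Je$ (using that the ideal $J$ is self-adjoint). The inclusion $\langle eJe\rangle\subseteq\langle eJ\rangle$ is trivial from $eJe\subseteq eJ$, so the content is the reverse, for which it suffices to show $eJ\subseteq\langle eJe\rangle$. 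Here I would invoke the elementary $C^*$-fact that $x\in\langle xx^{*}\rangle$ for every $x$ (via $f_n(xx^{*})x\to x$ with $f_n(t)=t/(t+1/n)$, and $f_n(xx^{*})\in C^*(xx^{*})\subseteq\langle xx^{*}\rangle$); applying it to $x=ej$ gives $ej\in\langle (ej)(ej)^{*}\rangle=\langle e\,jj^{*}\,e\rangle\subseteq\langle eJe\rangle$, since $jj^{*}\in J$.

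For (a), the chain $I\subseteq e\langle I\rangle f\subseteq\langle I\rangle\cap eAf$ is automatic: $I=eIf\subseteq\langle I\rangle$, and $e\langle I\rangle f$ sits in both $\langle I\rangle$ and $eAf$. Thus everything reduces to $\langle I\rangle\cap eAf\subseteq I$. Writing $\langle I\rangle=\overline{AIA}$ and sandwiching a generic approximant by $e$ and $f$ (using $I=eIf$) shows that every element of $\langle I\rangle\cap eAf$ lies in $\overline{(eAe)\,I\,(fAf)}$, so (a) collapses to the \emph{module property} $(eAe)\,I\,(fAf)\subseteq I$. This is the main obstacle, and the subtle point is that the triple-product axioms only yield the \emph{partial} actions $I\,\mathscr{R}_T\subseteq I$ (from $\trop{i}{x}{y}=ix^{*}y$) and $\mathscr{L}_T\,I\subseteq I$ (from $\trop{x}{y}{i}=xy^{*}i$), where for $T=eAf$ one has $\mathscr{R}_T=f\langle e\rangle f=fAf\cap\langle e\rangle$ and $\mathscr{L}_T=e\langle f\rangle e$, which are \emph{properly} smaller than $fAf$ and $eAe$ in general. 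To upgrade the right action from $\mathscr{R}_T$ to all of $fAf$ I would note that $\mathscr{R}_T$ is a genuine ideal of $fAf$ (being $fAf\cap\langle e\rangle$), take an approximate unit $(u_\lambda)$ of the $C^*$-algebra $\mathscr{R}_T$, and run the standard argument: for $i\in I$, $c\in fAf$, one has $iu_\lambda\to i$ (because $I=\overline{I\mathscr{R}_T}$, itself following from $I=I\mathscr{R}_I$ and $\mathscr{R}_I\subseteq\mathscr{R}_T$), while $u_\lambda c\in\mathscr{R}_T$ (ideal), so $i(u_\lambda c)\in I\mathscr{R}_T\subseteq I$, whence $ic=\lim_\lambda i(u_\lambda c)\in I$. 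The symmetric argument on the left gives $(eAe)\,I\subseteq I$, and composing the two yields $(eAe)\,I\,(fAf)\subseteq I$, completing (a).

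For (c) I would first record that, for the TRO $T=eA$, $\mathscr{L}_{eA}=\overline{\mathrm{span}}\{e a b^{*} e\}=eAe$ and $\mathscr{R}_{eA}=\overline{AeA}=\langle e\rangle$. The strategy is then to identify the asserted map $I\mapsto Ie$ with the $\mathscr{L}$-correspondence. By the same triple-product computations, now exploiting that $e\in eA$, I would show $Ie=I\cap eAe=\mathscr{L}_I$: the inclusion $\mathscr{L}_I\subseteq I\cap eAe$ comes from $ij^{*}=\trop{i}{j}{e}\in I$, and the reverse $I\cap eAe\subseteq\mathscr{L}_I$ from $f_n(kk^{*})k\to k$ with $f_n(kk^{*})\in\mathscr{L}_I$ and $f_n(kk^{*})k\in\mathscr{L}_I\,(eAe)\subseteq\mathscr{L}_I$, using that $\mathscr{L}_I$ is an ideal of $eAe$. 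Finally I would establish (the argument is again two lines of the above type) the bijection between closed TRO ideals $I$ of a TRO $T$ and closed ideals of $\mathscr{L}_T$, given by $I\mapsto\mathscr{L}_I$ with inverse $J\mapsto\overline{JT}$; specialised to $T=eA$ and combined with $\mathscr{L}_I=Ie$, this is precisely the correspondence $I\leftrightarrow Ie$ of (c).

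In all three parts the genuine work is concentrated in the module property underlying (a): because $\mathscr{R}_T$ and $\mathscr{L}_T$ are in general strictly smaller than $fAf$ and $eAe$, the extension of the one-sided module actions to the full corner algebras cannot be read off from a bare algebraic identity and instead requires the approximate-unit (ideal-of-an-ideal) device described above; once that is in hand, (a) and (c) follow, and (b) is independent and elementary.
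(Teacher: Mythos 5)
Your proof is correct, but it is organised quite differently from the paper's, which disposes of all three parts in a few lines. Part (b) is where the two are closest: the paper notes $\langle eJ\rangle=\langle Je\rangle$ exactly as you do and then uses the identity $A(eJe)A=(AeJ)(JeA)$ plus norm closures, which is doing the same work as your $x\in\langle xx^{*}\rangle$ trick. The real divergence is in (a) and (c). For (a), the paper never passes through the module property $(eAe)I(fAf)\subseteq I$: it observes that $e(AIA)f=(eAf)I^{\#}(eAf)\subseteq I$, i.e.\ once an element of $\langle I\rangle$ is compressed by $e$ and $f$, the outer copies of $A$ are automatically absorbed into $eAf$ (because $I^{\#}=fI^{\#}e$ and $\langle I\rangle=\langle I^{\#}\rangle$, recorded just before the proposition), so the TRO-ideal property of $I$ applies directly and $e\langle I\rangle f\subseteq I$ follows in one line, with no approximate units. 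In particular your closing claim that the upgrade from $\mathscr{R}_T$, $\mathscr{L}_T$ to the full corners \emph{cannot} be read off from a bare algebraic identity is overstated --- the paper's sandwich identity is exactly such an identity --- although your approximate-unit (ideal-of-an-ideal) route is perfectly valid and has the merit of making the corner-module property explicit. For (c), the paper deduces the bijection from (a) and (b): by (a) every ideal of $eA$ has the form $e\langle I\rangle$ and every ideal of $eAe$ the form $e\langle I\rangle e$, and if $eJ_1e=eJ_2e$ then (b) gives $\langle eJ_1\rangle=\langle eJ_2\rangle$, whence $eJ_1=eJ_2$ by (a). You instead prove (c) independently of (a) and (b), via the general fact that for any TRO $T$ the assignment $I\mapsto\mathscr{L}_I$ is a bijection from closed TRO ideals of $T$ onto closed ideals of $\mathscr{L}_T$ with inverse $J\mapsto\overline{JT}$, specialised to $T=eA$, $\mathscr{L}_{eA}=eAe$, $\mathscr{L}_I=Ie$. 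That fact is true, your sketch of it is sound (both compositions are handled by the same $f_n$-trick, though verifying $\overline{\mathscr{L}_IT}=I$ and $\mathscr{L}_{\overline{JT}}=J$ is a little more than two lines), and it has independent interest since it applies to TROs that are not corners; what it loses is the feature the paper actually exploits later, namely the explicit parametrisation of the ideals of $eA$ and $eAe$ by ideals of the ambient algebra $A$, which is the form in which Proposition~\ref{PropTROIdeals} is invoked in the proof of Theorem~\ref{ThmClassifOPSTR}.
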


\begin{proof}
\begin{enumerate}[(a)]
\item If $I$ is an ideal of $eAf$, then $e(AIA) f = (eAf) I^{\#} (eAf)
\subseteq I$ so that $e \langle I \rangle  f \subseteq I$, giving
(a).

\item
Since $(eJ)^{\#} = J e$ we have $\langle e J \rangle = \langle J e
\rangle$, which is the norm closure of $AeJ$ and of $J e A$. Since
$A(eJe)A = (AeJ)(JeA)$, the result follows after taking norm closures.

\item Let $J_1$ and $J_2$ be ideals of $A$ such that $eJ_1 e = e
J_2e$,
Using (a) and (b), $eJ_1 = \langle e J_1 \rangle \cap eA = \langle e
J_2 \rangle \cap eA = eJ_2$, which suffices.
\end{enumerate}
\end{proof}

\begin{proposition}
\label{MaximalOpSpIdeals}
Let $T$ be a TRO, $E$ a \JCstar-subtriple of $T$ and let $\mathscr{S}$
denote the set of ideals $I$ of $T$ such that $I \cap E = \{0\}$. Then
each element $I$ of $\mathscr{S}$ is contained in a maximal element of
$\mathscr{S}$.
\end{proposition}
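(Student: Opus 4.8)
The plan is to apply Zorn's lemma to the poset $\{ J \in \mathscr{S} : J \supseteq I \}$ ordered by inclusion, so the crux is to verify that every chain in $\mathscr{S}$ has an upper bound lying in $\mathscr{S}$.

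First I would record the quantitative fact that underlies everything. Given any $J \in \mathscr{S}$, the quotient $T/J$ is again a TRO and the quotient map $\pi_J \colon T \to T/J$ is a TRO homomorphism, hence a triple homomorphism (it preserves $\trop a b c$ and therefore $\tp a b c$); its restriction to $E$ is a triple homomorphism of \JCstar-triples whose kernel is $J \cap E = \{0\}$. By Lemma~\ref{TROmorphsAreCC}(a) an injective triple homomorphism is isometric, so $\pi_J|_E$ is isometric. Translating, this says
\[
\mathrm{dist}(x, J) = \| \pi_J(x) \| = \|x\| \qquad (x \in E).
\]
Thus being in $E$ keeps every nonzero vector a fixed positive distance from each ideal of $\mathscr{S}$.

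Next, for a chain $\{ I_\lambda \}_{\lambda}$ in $\mathscr{S}$ I would take as candidate upper bound $J = \overline{\bigcup_\lambda I_\lambda}$. Since the $I_\lambda$ are totally ordered, their union is a linear subspace closed under the ternary product in each variable, and because that product is jointly norm continuous its closure $J$ is a (norm closed) ideal of $T$. It remains only to check that $J \in \mathscr{S}$, i.e. that $J \cap E = \{0\}$.

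This last verification is the main (and essentially only) obstacle: a priori the passage to the norm closure might manufacture an element of $E$ belonging to no single $I_\lambda$. Here the uniform distance estimate does the work. If $x \in J \cap E$ and $\varepsilon > 0$, then by definition of $J$ there is some index $\lambda$ and some $y \in I_\lambda$ with $\|x - y\| < \varepsilon$, whence $\|x\| = \mathrm{dist}(x, I_\lambda) \le \|x - y\| < \varepsilon$. As $\varepsilon$ was arbitrary, $x = 0$, so $J \cap E = \{0\}$ and $J$ is an upper bound for the chain lying in $\mathscr{S}$. Zorn's lemma then yields a maximal element of $\mathscr{S}$ containing the given $I$ (with $I$ itself serving as the bound for the empty chain).
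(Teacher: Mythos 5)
Your proposal is correct and follows essentially the same route as the paper: both arguments take the norm closure of the union of a chain, and both use the fact that for each ideal $I_\lambda$ in the chain the map $E \to T/I_\lambda$ is isometric (the paper uses this implicitly, you justify it via Lemma~\ref{TROmorphsAreCC}(a)) to conclude that any element of $E$ in the closure has arbitrarily small norm, hence is zero, before invoking Zorn's lemma. Your write-up merely makes explicit the distance formulation $\mathrm{dist}(x, I_\lambda) = \|x\|$ that the paper expresses as $\|x\| \leq \|x - z_n\|$.
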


\begin{proof}
Let $J \in \mathscr{S}$ and let $(I_\lambda)$ be a chain in
$\mathscr{S}$ with $J \subset I_\lambda$ for each $\lambda$
and consider the norm closure $I$ of $\bigcup I_\lambda$, an ideal of
$T$.
Given $x \in E$ and $z \in I$, choose $(z_n)$ in $\bigcup I_\lambda$
with $\|z_n - z \| \to 0$. Since, for each $\lambda$, $E \to
T/I_\lambda$ ($x \to x+I_\lambda$) is isometric, for each $n$ we have
$\|x\| \leq \|x - z_n\|$ implying that $\|x\| \leq \|x - z\|$ and
hence that $\|x\| = \|x + I\|$. Therefore $I \cap E = \{0\}$, and the
result follows from Zorn's lemma.
\end{proof}

A non-zero tripotent $u$ in a \JCstar-triple $E$ is said to be minimal
if $\tp u E u = \IC u$. If $E$ has a predual and contains a minimal
tripotent but has no non-trivial weak*-closed ideals, then it is a
Cartan factor. See \S\ref{SectionCartanFs} for a detailed discussion.

\section{Universal objects}
\label{SectionUniversal}

We shall examine a variety of universal objects. In order to avoid
later repetition we begin with general remarks. Suppose $\mathscr{C}$
is a subcategory of a category $\mathscr{D}$ and let $E \in \mathrm{Ob}
(\mathscr{D})$.
Let (by a slight abuse) $ (F, \alpha)$ be said to be a
universal $\mathscr{C}$-object for $E$, where $F \in \mathrm{Ob}
(\mathscr{C})$
and $\alpha \colon E \to F$ is a
$\mathscr{D}$-morphism, if each $\mathscr{D}$-morphism $\pi \colon
E \to G \in \mathrm{Ob}(\mathscr{C})$ entails the existence of a
unique $\mathscr{C}$-morphism $\tilde{\pi} \colon F \to G$ such that
$\tilde{\pi} \circ \alpha = \pi$.
In these circumstances $\tilde{\pi} = \id_F$ when $\pi = \alpha$. If
$(H, \beta)$ is another universal $\mathscr{C}$-object for $E$ then
there is a a (unique) $\mathscr{C}$-isomorphism
$\tilde{\beta} \colon F \to H$ with
$\tilde{\beta} \circ \alpha = \beta$, so that $(F, \alpha)$ and $(H,
\beta)$ are naturally equivalent. We may write $(F, \alpha) \equiv (H,
\beta)$ to signify this. If for each $E \in \mathrm{Ob}(\mathscr{D})$
a universal $\mathscr{C}$-object $(\mathscr{C}(E), \alpha_E)$ exists,
then there is a natural covariant functor $\mathscr{D} \to
\mathscr{C}$ sending $E$ to $\mathscr{C}(E)$ with corresponding (and
obvious) action on morphisms.

We shall establish the existence of the universal $C^*$-algebra and of
the universal TRO of a \JCstar-triple and shall discuss connections
with certain associated universal objects. We proceed by exploiting
the general construction \cite[II.8.3]{Blackadar} of the universal
$C^*$-algebra $C^*(\mathscr{G}, \mathscr{R})$ of a set of generators
$\mathscr{G}$ and relations $\mathscr{R}$ when the latter is
realisable among operators on a Hilbert space.

We remark that in Theorem~\ref{CstaruXexists} below, (a) is a formal
consequence of (b) as is the injectivity of $\alpha_E$. The same goes
for the statements of Corollary~\ref{TROuXexists}.

\begin{theorem}
\label{CstaruXexists}
Let $E$ be \JCstar -triple. Up to natural equivalence with
$*$-isomorphism, there is a unique
pair $(C^* (E), \utro E)$
where $C^* (E)$ is a $C^*$-algebra and
$\utro E \colon E \to C^* (E)$ is an injective triple homomorphism,
with the following properties:
\begin{enumerate}[(a)]
\item $\utro E(E)$ generates $C^* (E)$ as a $C^*$-algebra;
\item for each triple morphism $\pi \colon E \to
A$, where $A$ is  a $C^*$-algebra,
there is a unique
$*$-homomorphism $\tilde{\pi} \colon C^* (E) \to A$ with
$\tilde{\pi} \circ \utro E = \pi$.
\end{enumerate}
\end{theorem}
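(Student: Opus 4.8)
The plan is to realise $C^*(E)$ as a universal $C^*$-algebra on generators and relations, via the construction of \cite[II.8.3]{Blackadar}, and then to deduce the remaining assertions from the general remarks on universal objects made at the start of this section.

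First I would take one generator $x_g$ for each $g \in E$ and impose the relations expressing linearity, $x_{g+h}=x_g+x_h$ and $x_{\lambda g}=\lambda x_g$ for $g,h\in E$ and $\lambda\in\IC$, preservation of the triple product, $\tfrac12(x_g x_h^* x_k + x_k x_h^* x_g)=x_{\tp g h k}$, and the norm bound $\|x_g\|\le\|g\|$. Each is a $*$-polynomial or norm relation of the kind admitted by Blackadar's formalism, and the norm bounds render the family \emph{admissible}, so that the universal $C^*$-algebra $C^*(E)$ exists together with the map $\utro E\colon g\mapsto x_g$. The point of the first two groups of relations is that a $*$-representation of the generators respecting all the relations is exactly the same datum as a triple homomorphism of $E$ into the ambient $\BH$; in particular $\utro E$ is itself a triple homomorphism, and by Lemma~\ref{TROmorphsAreCC}(a) every such triple homomorphism is automatically contractive, so the norm relation is redundant except in its role of guaranteeing admissibility.

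For the universal property (b) I would, given a triple morphism $\pi\colon E\to A$, embed $A\subseteq\BH$ and view $\pi$ as a representation of the generators satisfying every relation; the defining property of $C^*(E)$ then furnishes the required unique $*$-homomorphism $\tilde\pi$ with $\tilde\pi\circ\utro E=\pi$, whose image is the $C^*$-algebra generated by $\pi(E)$ and hence lies in $A$. As anticipated in the remark preceding the statement, (a) and the injectivity of $\utro E$ now follow formally: (a) holds because the $x_g$ generate by construction (equivalently, one applies (b) to $\utro E$ regarded as a map into the subalgebra it generates and invokes the uniqueness clause to identify the resulting endomorphism of $C^*(E)$ with $\id_{C^*(E)}$), while injectivity follows by applying (b) to any isometric triple embedding $\iota\colon E\to B$ of $E$ into a $C^*$-algebra --- one exists since $E$ is a \JCstar-triple --- for then $\tilde\iota\circ\utro E=\iota$ with $\iota$ injective forces $\utro E$ to be injective, indeed isometric. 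Uniqueness up to natural equivalence with $*$-isomorphism is precisely the general fact recorded at the start of this section, applied to the subcategory of $C^*$-algebras inside the category of \JCstar-triples.

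The step I expect to be the main obstacle is the careful check that the chosen relations are \emph{admissible} in the exact sense of \cite[II.8.3]{Blackadar}: that they are realisable among operators on a Hilbert space and that each generator has a uniform bound across all such realisations, so that $C^*(E)$ is a genuine $C^*$-algebra and does not collapse. The uniform bound $\|x_g\|\le\|g\|$ is supplied by contractivity of triple homomorphisms, while non-triviality of the class of realisations --- and with it the fact that $\utro E$ is isometric rather than merely contractive --- comes from the concrete realisation of $E$ available from its being a \JCstar-triple.
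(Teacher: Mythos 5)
Your proposal is correct and takes essentially the same approach as the paper: both realise $C^*(E)$ as Blackadar's universal $C^*$-algebra on generators indexed by $E$ subject to the same linearity, triple-product and norm relations, obtain (b) by viewing a triple homomorphism $\pi\colon E\to A\subseteq\BH$ as a representation of the generators satisfying the relations, and deduce injectivity of $\utro E$ from the existence of a single injective concrete triple embedding of $E$. The only difference is presentational --- you spell out the admissibility check and the formal derivation of (a) and injectivity from (b), which the paper compresses into the remark preceding the theorem and the phrase ``since there is an example of an injective $\pi$''.
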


\begin{proof}
Consider a set $\mathscr{G} =
\{\alpha_a : a \in E\}$ of generators and a set
of relations $\mathscr{R}$
consisting of the union of the sets
$\{ \alpha_{\lambda a + b } - \lambda \alpha_a - \alpha_b : \lambda \in
\IC, a, b \in E \}$,
$\{ \alpha_{\tp{a}{ b}{ c}} - (\alpha_a \alpha_b^* \alpha_c + \alpha_c
\alpha_b^* \alpha_a)/2  : a,b,c \in E\}$
and
$ \{ \|\alpha_a\| \leq \|a\|: a \in E\}$.
Now put $C^* (E)
= C^*(\mathscr{G}, \mathscr{R})$
and $\utro E(a) = \alpha_a$ for each $a \in A$.
By construction, $\utro E(E)$ generates $C^* (E)$ and 
$\utro E \colon E\to C^*(E)$ is a triple homomorphism.

Let $\pi \colon E \to A$ be a triple homomorphism into a $C^*$-algebra
$A$. Since $\{ \pi(a) : a \in E \}$ satisfies the relations
$\mathscr{R}$, the universal property of $C^*(\mathscr{G},
\mathscr{R})$ implies the existence of $\tilde{\pi}$ as claimed in
(b). Moreover, since there is an example of an injective $\pi$, $\utro
E$ is injective. Uniqueness of $(C^*(E), \utro E)$ up to
$*$-isomorphism is clear.
\end{proof}

\begin{corollary}
\label{TROuXexists}
Let $E$ be a \JCstar -triple.
Up to natural equivalence with TRO isomorphism
there is a unique pair $(\TROu (E), \utro E)$
where $\TROu (E)$ is  a TRO and $\utro E \colon E \to \TROu (E)$
is an injective triple homomorphism
with the following properties:
\begin{enumerate}[(a)]
\item $\utro E(E)$ generates $\TROu (E)$ as a TRO;
\item
\label{TROuUniversalProp}
 for each triple morphism $\pi \colon E \to
T$, where $T$ is a TRO, there is a unique TRO
morphism $\tilde{\pi} \colon \TROu (E) \to T$ such that
$\tilde{\pi} \circ \utro E = \pi$.
\end{enumerate}
\end{corollary}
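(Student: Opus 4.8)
The plan is to realise $\TROu(E)$ directly inside the universal $C^*$-algebra $C^*(E)$ furnished by Theorem~\ref{CstaruXexists}, and then to obtain the universal TRO property by restricting $*$-homomorphisms. Concretely, I would set $\TROu(E) = \TRO(\utro E(E))$, the sub-TRO of $C^*(E)$ generated by $\utro E(E)$, and keep the same map $\utro E \colon E \to \TROu(E)$. Since $\utro E$ is an injective triple homomorphism into $C^*(E)$, it remains an injective triple homomorphism into $\TROu(E)$, so the injectivity and property (a) hold by construction (and, as the authors note, (a) and injectivity are in any case formal consequences of (b)).

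For the universal property (b), suppose $\pi \colon E \to T$ is a triple homomorphism into a TRO $T$. By the very definition of a TRO, $T$ sits as a sub-TRO of some $C^*$-algebra $A$ (e.g.\ the $C^*$-subalgebra it generates), so $\pi$ may be viewed as a triple homomorphism $E \to A$ into a $C^*$-algebra. Theorem~\ref{CstaruXexists}(b) then produces a unique $*$-homomorphism $\hat\pi \colon C^*(E) \to A$ with $\hat\pi \circ \utro E = \pi$. I would define $\tilde\pi$ to be the restriction of $\hat\pi$ to $\TROu(E)$; the restriction of a $*$-homomorphism to a sub-TRO is a TRO homomorphism, and clearly $\tilde\pi \circ \utro E = \pi$.

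The key point to check, and the step I expect to be the main (though mild) obstacle, is that $\tilde\pi$ actually takes values in $T$ rather than merely in $A$. Here I would use that $\hat\pi$ carries a spanning element $x_1 x_2^* x_3 \ldots x_{2n}^* x_{2n+1}$ of $\TRO(\utro E(E))$, with each $x_i \in \utro E(E)$, to the corresponding product of its $\pi$-images, which lies in $\TRO(\pi(E))$. Since $\pi(E) \subseteq T$ and $T$ is a sub-TRO of $A$, we have $\TRO(\pi(E)) \subseteq T$; as $\tilde\pi$ is completely contractive (Lemma~\ref{TROmorphsAreCC}) and such spanning elements are dense, it follows that $\tilde\pi(\TROu(E)) \subseteq T$.

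Uniqueness of $\tilde\pi$ is then immediate: any TRO homomorphism $\TROu(E) \to T$ agreeing with $\pi$ on $\utro E(E)$ agrees with $\tilde\pi$ on the generating set $\utro E(E)$, hence on the dense linear span of ternary products of generators, hence everywhere by contractivity. Finally, the uniqueness of $(\TROu(E), \utro E)$ up to TRO isomorphism and its naturality follow from the general remarks on universal objects opening this section, applied with $\mathscr{C}$ the category of TROs and $\mathscr{D}$ the category of \JCstar-triples. Apart from the containment $\tilde\pi(\TROu(E)) \subseteq T$, every step is a direct transcription of the universal $C^*$-algebra property already established.
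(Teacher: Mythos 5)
Your proposal is correct and follows essentially the same route as the paper: both define $\TROu(E)$ as the TRO generated by $\utro E(E)$ inside $C^*(E)$ and obtain $\tilde\pi$ by restricting the $*$-homomorphism furnished by Theorem~\ref{CstaruXexists}(b), with uniqueness coming from the generating property (a). Your explicit verification that the restriction actually lands in $T$ (via the spanning ternary products and norm-closedness of $T$) is a detail the paper leaves implicit, but it is the same argument.
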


\begin{proof}
Let
$(C^* (E), \utro E)$ be as in Theorem~\ref{CstaruXexists} and let
$\TROu (E)$ denote the TRO generated by $\utro E(E) $ in $ C^*(E)$
(giving (a)).
Given a triple
homomorphism $\pi \colon E \to
T \subset \BH$, where $T$ is a TRO,
the $*$-homomorphism $C^*(E)$ to $\BH$ satisfying
Theorem~\ref{CstaruXexists} (b) restricts to the required TRO
homomorphism
$\tilde \pi \colon \TROu (E) \to T$, the uniqueness of which being
implied by (a).
\end{proof}

\begin{definition}
Let $E$ be a \JCstar-triple. In the notation of
Theorem~\ref{CstaruXexists} and Corollary~\ref{TROuXexists}, we define
$C^* (E)$ and $\TROu (E)$, more formally
$(C^* (E), \utro E)$ and $(\TROu (E), \utro E)$,
to be the \emph{universal $C^*$-algebra} and \emph{universal TRO} of
$E$. In each case, we refer to $\utro E$ as the universal embedding.
\end{definition}

\begin{remarks}
\label{functorialremarks}
We may define the \emph{universal \JCstar-algebra} $J^*(E)$
of a \JCstar triple $E$ to be the \JCstar-algebra generated by $\utro
E (E)$ in $C^* (E)$, with universal embedding $\utro E \colon E \to
J^*(E)$ in this case, by the method of proof of
Corollary~\ref{TROuXexists}. Each triple homomorphism $\pi \colon E
\to \BH$ induces a unique Jordan $*$-homomorphism $\tilde{\pi} \colon
J^*(E) \to \BH$ with $\tilde{\pi} \circ \utro E = \pi$.

If $T$ is a TRO and $A$ is a \JCstar-algebra, we may define
the universal $C^*$-algebras
$C^*_{\mathrm{TRO}}(T)$ and $C^*_{\mathrm{J}}(A)$
by the procedure of Theorem~\ref{CstaruXexists} with
transparent modifications to the set $\mathscr{R}$ of relations.
Thus, via the arising universal embedding of $T$ into
$C^*_{\mathrm{TRO}}(T)$, each TRO homomorphism $T \to \BH$ `lifts' to
a *-homomorphism $C^*_{\mathrm{TRO}}(T) \to \BH$. The corresponding
statement is true of $C^*_{\mathrm{J}}(A)$ in terms of Jordan
$*$-homomorphisms $A \to \BH$. 
Of these we note that $C^*_{\mathrm{J}}(A)$ has been studied in
detail in \cite{AlfsenHOSchultzActa1980,HancheOlsenJC} and
\cite[chapter 7]{HOS} in the guise of the universal $C^*$-algebras of
the $JC$-algebra $A_{sa}$ ($ = \{ a \in A : a^* =
a\}$).

For a \JCstar-triple $E$, all five universal objects introduced above
appear in the following single statement
\begin{equation}
\label{UnivOfUnivIsUniv}
C^*_{\mathrm{J}}(J^*(E)) = C^*(E) = C^*_{\mathrm{TRO}}(\TROu(E))
\end{equation}
with universal embeddings given by the inclusions
$J^*(E), \TROu(E) \subset C^*(E)$.

Indeed, $J^*(E)$ generates $C^*(E)$ since $\utro E(E)$ does. Given a
Jordan homomorphism $\pi \colon J^*(E) \to \BH$, the universal
property of $\utro E \colon E \to C^*(E)$ guarantees a
$*$-homomorphism $\psi \colon C^*(E) \to \BH$ agreeing with $\pi$ on
$\utro E (E)$. Since $J^*(E)$ is a \JCstar-algebra generated by $\utro
E(E)$ , $\psi$ must agree with $\pi$ on $J^*(E)$. The remaining claim
has an analogous proof.

It follows from (\ref{UnivOfUnivIsUniv}) together with
\cite[Theorem 7.1.8]{HOS} that there is a unique $*$-antihomomorphism
$\Phi_0$, of $C^*(E) = C^*_{\mathrm{J}}(J^*(E))$ acting identically on $J^*(E)$.
Since $\Phi_0$ has order 2 and since given $y = x_1 x_2^* \cdots
x_{2n}^* x_{2n+1}$ with $x_1,  \ldots, x_{2n+1} \in \utro E(E)$,
we have $\Phi_0(y) = x_{2n+1} x_{2n}^* \cdots x_2^* x_1 \in \TROu(E)$,
it follows that $\Phi_0(\TROu(E)) = \TROu(E)$. Thus, the restriction
$\Phi$ of $\Phi_0$ to $\TROu(E)$ is a TRO antiautomorphism of order 2.
If $\psi \colon \TROu(E) \to \TROu(E)$ is any TRO antiautomorphism
of $\TROu(E)$ with $\psi \circ \utro E = \utro E$, then $\psi \circ
\Phi$ is a TRO automorphism of $\TROu(E)$ acting identically on $\utro
E(E)$, so that $\psi = \Phi^{-1} = \Phi$. We summarise these remarks
in the following.
\end{remarks}

\begin{theorem}
\label{canonicalinvolution}
Let $E$ be a \JCstar -triple. Then
\begin{enumerate}[(a)]
\item
there is a unique
$*$-antiautomorphism $\Phi_0$ of $C^*(E)$ acting identically on
$\utro E(E)$;
\item
the restriction $\Phi$ of $\Phi_0$ to $\TROu(E)$
is the unique TRO antiautomorphism of $\TROu(E)$ acting identically on
$\utro E(E)$;
\item
\label{PhiInvol}
$\Phi_0$ and $\Phi$ have order $2$;
\item
\label{PhiFixedJstarX}
$\Phi$ acts identically on $J^*(E)$;
\item
\label{RightLeftAlgsExchanged}
$\Phi(\mathscr{L}_T) = \mathscr{R}_T$,
where $T = \TROu(E)$.
\end{enumerate}
\end{theorem}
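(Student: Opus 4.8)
The plan is to carry (a)--(d) over from the preceding remarks and to concentrate the real effort on (e). For (a), I would take the $*$-antihomomorphism $\Phi_0$ furnished by \eqref{UnivOfUnivIsUniv} and \cite[Theorem 7.1.8]{HOS}: it fixes $J^*(E)$, hence $\utro E(E)$, and it has order $2$, so it is a bona fide $*$-antiautomorphism, which already delivers (c) and (d). Uniqueness in (a) is automatic, since $\utro E(E)$ generates $C^*(E)$ as a $C^*$-algebra and a $*$-antihomomorphism is determined by its values on a generating set; conversely any $*$-antiautomorphism fixing $\utro E(E)$ restricts to a Jordan $*$-homomorphism of $J^*(E)$ fixing its generators, hence fixes $J^*(E)$, reconciling (a) with (d). Part (b) is then the restriction argument already recorded: $\Phi_0$ sends a word $x_1 x_2^* \cdots x_{2n+1}$ in $\utro E(E)$ to $x_{2n+1} \cdots x_1 \in \TROu(E)$, so $\Phi_0(T) = T$ for $T = \TROu(E)$, the restriction $\Phi$ is an order-$2$ TRO antiautomorphism fixing $\utro E(E)$, and any competitor $\psi$ makes $\psi \circ \Phi$ a TRO automorphism fixing the generating set $\utro E(E)$, forcing $\psi = \Phi$.

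For (e), the first point to settle is what $\Phi(\mathscr{L}_T)$ should mean, since $\Phi$ is a priori defined only on $T$ whereas $\mathscr{L}_T$ and $\mathscr{R}_T$ are the $C^*$-subalgebras of $C^*(E)$ generated by $\{ab^* : a,b \in T\}$ and $\{a^*b : a,b \in T\}$. The clean resolution is to work with the global $*$-antiautomorphism $\Phi_0$ of $C^*(E)$, of which $\Phi$ is the restriction, and to read $\Phi(\mathscr{L}_T)$ as $\Phi_0(\mathscr{L}_T)$; this is harmless because, as the computation below shows, $\Phi_0$ acts on the generators of $\mathscr{L}_T$ through its values on $T$ alone.

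The computation is then immediate. Using that $\Phi_0$ is a $*$-antiautomorphism with $\Phi_0(T) = T$, for $a,b \in T$ we have $\Phi_0(ab^*) = \Phi_0(b)^*\Phi_0(a)$ with $\Phi_0(a), \Phi_0(b) \in T$, so $\Phi_0$ carries the generating set $\{ab^*\}$ of $\mathscr{L}_T$ into $\{c^*d : c,d \in T\} \subseteq \mathscr{R}_T$; since a $*$-antiautomorphism maps the $C^*$-algebra generated by a set into the $C^*$-algebra generated by the image, $\Phi_0(\mathscr{L}_T) \subseteq \mathscr{R}_T$. The symmetric identity $\Phi_0(a^*b) = \Phi_0(b)\Phi_0(a)^*$ gives $\Phi_0(\mathscr{R}_T) \subseteq \mathscr{L}_T$, and applying $\Phi_0$ to this inclusion together with $\Phi_0^2 = \id$ yields $\mathscr{R}_T \subseteq \Phi_0(\mathscr{L}_T)$, whence $\Phi_0(\mathscr{L}_T) = \mathscr{R}_T$. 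I expect the only genuinely delicate point to be the bookkeeping around the meaning of $\Phi(\mathscr{L}_T)$ and the passage to $\Phi_0$ on $C^*(E)$; once that is fixed, the two inclusions and the order-$2$ squeeze are routine, and everything else is a matter of quoting the remarks.
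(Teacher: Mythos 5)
Your proposal is correct and follows essentially the same route as the paper, which derives (a)--(d) from the identification $C^*(E) = C^*_{\mathrm{J}}(J^*(E))$ together with \cite[Theorem 7.1.8]{HOS}, the word-reversal argument showing $\Phi_0(\TROu(E)) = \TROu(E)$, and the competitor argument $\psi \circ \Phi = \id$ for uniqueness of $\Phi$. The extra details you supply --- the equalizer argument giving uniqueness among antiautomorphisms fixing only $\utro E(E)$ (rather than all of $J^*(E)$), and the explicit two-inclusion computation for (e) read via $\Phi_0$ --- are points the paper leaves implicit, and you handle them correctly.
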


We refer to $\Phi_0$ and $\Phi$ of Theorem~\ref{canonicalinvolution} 
as the \emph{canonical involutions} of $C^*(E)$ and $\TROu(E)$,
respectively.

\begin{proposition}
\label{MsummandsTheorem}
If a \JCstar -triple $E$ is the sum of orthogonal ideals
$I$ and $J$,
then $(\TROu(E), \utro E) \equiv
(\TROu(I) \oplus \TROu(J), \utro I \oplus
\utro J)$.
\end{proposition}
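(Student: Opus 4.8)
The plan is to verify directly that the pair $(\TROu(I) \oplus \TROu(J),\, \utro{I} \oplus \utro{J})$ enjoys the universal property recorded in Corollary~\ref{TROuXexists}, and then to conclude via the uniqueness of universal objects established in the general discussion of \S\ref{SectionUniversal}. Here $\utro{I} \oplus \utro{J}$ denotes the map carrying $a+b$ (with $a \in I$, $b \in J$) to $(\utro{I}(a), \utro{J}(b))$; since $E = I+J$ is an $\ell_\infty$ sum of orthogonal ideals and each of $\utro{I}$, $\utro{J}$ is an injective triple homomorphism into a TRO, this map is readily checked to be an injective triple homomorphism into the direct sum TRO (whose ternary product is computed componentwise). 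Generation is equally direct: writing $\alpha = \utro{I}\oplus\utro{J}$, we have $\alpha(a) = (\utro{I}(a), 0)$ and $\alpha(b) = (0, \utro{J}(b))$ for $a \in I$, $b \in J$, and as $\utro{I}(I)$, $\utro{J}(J)$ generate $\TROu(I)$, $\TROu(J)$ respectively, the TRO generated by $\alpha(E)$ contains both $\TROu(I)\oplus 0$ and $0 \oplus \TROu(J)$, hence all of $\TROu(I)\oplus \TROu(J)$.

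The substance lies in property~\eqref{TROuUniversalProp}. Given a triple homomorphism $\pi \colon E \to T$ into a TRO $T$, I would restrict to triple homomorphisms $\pi|_I \colon I \to T$ and $\pi|_J \colon J \to T$, then invoke the universal property of $\TROu(I)$ and $\TROu(J)$ to produce unique TRO homomorphisms $\sigma_I \colon \TROu(I) \to T$ and $\sigma_J \colon \TROu(J) \to T$ with $\sigma_I \circ \utro{I} = \pi|_I$ and $\sigma_J \circ \utro{J} = \pi|_J$. The candidate map is $\tilde\pi(x,y) = \sigma_I(x) + \sigma_J(y)$. That $\tilde\pi \circ \alpha = \pi$ is immediate, and uniqueness of $\tilde\pi$ follows from the generation above, a TRO homomorphism being determined on a generating set. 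The one genuinely non-formal point, and the main obstacle, is that $\tilde\pi$ really is a TRO homomorphism: expanding $\trop{\tilde\pi(x_1,y_1)}{\tilde\pi(x_2,y_2)}{\tilde\pi(x_3,y_3)}$ yields, besides the two diagonal terms $\trop{\sigma_I(x_1)}{\sigma_I(x_2)}{\sigma_I(x_3)}$ and $\trop{\sigma_J(y_1)}{\sigma_J(y_2)}{\sigma_J(y_3)}$, a collection of cross terms mixing the ranges of $\sigma_I$ and $\sigma_J$, and these must all vanish.

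To dispose of the cross terms I would show that the ranges $\sigma_I(\TROu(I))$ and $\sigma_J(\TROu(J))$ are mutually orthogonal in $T$. Since $I$ and $J$ are orthogonal ideals, $\tp{a}{a}{b} = 0$ for $a \in I$, $b \in J$, whence $\tp{\pi(a)}{\pi(a)}{\pi(b)} = \pi(\tp{a}{a}{b}) = 0$; realising $T$ concretely, this says $\pi(a)^*\pi(b) = \pi(a)\pi(b)^* = 0$, so that $\pi(I)$ and $\pi(J)$ are orthogonal subsets of the ambient $C^*$-algebra. By Lemma~\ref{MsummandsProp}, $\TRO(\pi(I))$ and $\TRO(\pi(J))$ are then orthogonal ideals of $\TRO(\pi(I)+\pi(J))$, so every product $\sigma_I(x)\sigma_J(y)^*$, $\sigma_I(x)^*\sigma_J(y)$ and their adjoints vanish. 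As $\sigma_I(\TROu(I)) = \TRO(\pi(I))$ and $\sigma_J(\TROu(J)) = \TRO(\pi(J))$ (each $\sigma$ carries a generating set onto the corresponding generating set, so its image is the TRO generated by that set), every cross term in the expansion vanishes and $\tilde\pi$ is a TRO homomorphism. This completes the verification of the universal property, and the natural equivalence $(\TROu(E), \utro{E}) \equiv (\TROu(I)\oplus\TROu(J),\, \utro{I}\oplus\utro{J})$ follows.
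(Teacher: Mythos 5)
Your proof is correct and takes essentially the same route as the paper: the paper's own proof is a one-line appeal to Lemma~\ref{MsummandsProp} together with the universal property of $\TROu(\cdot)$, which are exactly the two ingredients you deploy (the lemma to make the ranges of $\sigma_I$ and $\sigma_J$ orthogonal so the cross terms vanish, and the universal property both to construct $\sigma_I$, $\sigma_J$ and to conclude via uniqueness of universal objects). Your write-up simply supplies the details that the paper leaves implicit.
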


\begin{proof}
This follows from Lemma~\ref{MsummandsProp} and the universal property
of $\TROu(\cdot)$.
\end{proof}

\begin{proposition}
\label{TROuOfJCalg}
Let $A$ be a \JCstar-algebra. Then 
\[
(\TROu(A), \utro A) = (C^*_{\mathrm{J}}(A),
\beta_A)
\]
where $\beta_A \colon A \to C^*_{\mathrm{J}}(A)$ is the universal
\JCstar-algebra embedding. In this identification, the canonical
involution of $\TROu(A)$ is the involutory $*$-antiautomorphim of
$C^*_{\mathrm{J}}(A)$ acting identically on $\beta_A(A)$.
\end{proposition}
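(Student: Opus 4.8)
The plan is to show that the pair $(C^*_{\mathrm{J}}(A),\beta_A)$ satisfies the universal property characterising $(\TROu(A),\utro A)$ in Corollary~\ref{TROuXexists}; the stated identity then follows from the essential uniqueness of universal objects. To begin, recall that on a \JCstar-algebra the triple product is expressible through the Jordan product and involution, so the Jordan $*$-homomorphism $\beta_A$ is in particular a triple homomorphism into the $C^*$-algebra $C^*_{\mathrm{J}}(A)$ viewed as a TRO. For condition (a), since $\beta_A(A)$ is a \JCstar-subalgebra, Lemma~\ref{TROofAlgIsAlg} shows that the TRO it generates is a $C^*$-subalgebra of $C^*_{\mathrm{J}}(A)$; as this subalgebra contains the generating set $\beta_A(A)$, it equals $C^*_{\mathrm{J}}(A)$, so $\beta_A(A)$ generates $C^*_{\mathrm{J}}(A)$ as a TRO.

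For the universal property (b), I would start with a triple homomorphism $\pi\colon A\to T$ into a TRO $T\subseteq\BH$. Its kernel is a (Jordan) ideal, so the image $E=\pi(A)$ is a \JCstar-subtriple of $T$ linearly isometric to the \JCstar-algebra $A/\ker\pi$ (Lemma~\ref{TROmorphsAreCC}(a)), and Proposition~\ref{JCStAlgSubtriple} furnishes a partial isometry $u\in\BH$ such that $C:=u^*\TRO(E)$ is a $C^*$-subalgebra and $\rho\colon\TRO(E)\to C$, $x\mapsto u^*x$, is a TRO isomorphism. The crux is to verify that $\rho\circ\pi\colon A\to C$ is a Jordan $*$-homomorphism for the native $C^*$-structure of $C$. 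Taking $u=\psi(1)$ with $\psi$ the weak*-continuous triple extension of the isometry $A/\ker\pi\to E$, the identity $\tp{1}{1}{a}=a$ forces the extension to land in the Peirce $2$-space $E_2(u)$, on which a unital triple homomorphism preserves the \JCstar-structure $a^{\#}=\tp{u}{a}{u}$, $a\circ b=\tp{a}{u}{b}$; and by Lemma~\ref{TROPeirce2}(b) the map $\rho=(u^*\,\cdot\,)$ carries precisely this structure onto the native one of $C$. Composing, $\rho\circ\pi$ is Jordan.

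Granting this, the universal property of $C^*_{\mathrm{J}}(A)$ delivers a unique $*$-homomorphism $\sigma\colon C^*_{\mathrm{J}}(A)\to C$ with $\sigma\circ\beta_A=\rho\circ\pi$, whence $\tilde\pi:=\rho^{-1}\circ\sigma\colon C^*_{\mathrm{J}}(A)\to T$ is a TRO homomorphism with $\tilde\pi\circ\beta_A=\pi$; uniqueness of $\tilde\pi$ is immediate from condition (a). This yields $(C^*_{\mathrm{J}}(A),\beta_A)\equiv(\TROu(A),\utro A)$.

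For the final assertion about the canonical involution, note that the opposite algebra $C^*_{\mathrm{J}}(A)^{\mathrm{op}}$ still receives $\beta_A$ as a Jordan $*$-homomorphism, since the Jordan product is symmetric and the involution is unchanged. The universal property of $C^*_{\mathrm{J}}(A)$ then gives a $*$-homomorphism $C^*_{\mathrm{J}}(A)\to C^*_{\mathrm{J}}(A)^{\mathrm{op}}$ fixing $\beta_A(A)$, i.e.\ an involutory $*$-antiautomorphism of $C^*_{\mathrm{J}}(A)$ acting identically on $\beta_A(A)$ (cf.\ \cite[Theorem 7.1.8]{HOS}). Being a TRO antiautomorphism fixing $\utro A(A)=\beta_A(A)$ pointwise, it coincides with the canonical involution $\Phi$ by the uniqueness clause of Theorem~\ref{canonicalinvolution}(b). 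I expect the one genuine obstacle to be the Jordan-homomorphism claim of the second paragraph; the remainder is a routine manipulation of universal properties.
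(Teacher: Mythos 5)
Your overall architecture coincides with the paper's: condition (a) via Lemma~\ref{TROofAlgIsAlg}, condition (b) by converting the given triple homomorphism into a Jordan $*$-homomorphism landing in a Peirce-2 $C^*$-algebra and lifting it through the universal property of $C^*_{\mathrm{J}}(A)$, and the involution statement via \cite[Theorem 7.1.8]{HOS} together with the uniqueness clause of Theorem~\ref{canonicalinvolution}(b); that last part of your argument is correct and is exactly the mechanism the paper intends. However, your verification of (b) has a gap. You factor $\pi$ through its image $E=\pi(A)$ and describe $E$ as ``linearly isometric to the \JCstar-algebra $A/\ker\pi$'', which is precisely the hypothesis you need in order to invoke Proposition~\ref{JCStAlgSubtriple}. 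The paper only records that quotients of \JCstar-\emph{triples} by ideals are \JCstar-triples; the assertion that $A/\ker\pi$ is (linearly isometric to) a \JCstar-\emph{algebra}, i.e.\ that the class of \JCstar-algebras is closed under quotients, is a genuinely stronger statement. It is true (it follows, for instance, from the facts that $A^{**}$ is a \JWstar-algebra and that $(\ker\pi)^{**}$ is a weak*-closed Jordan ideal of the form $c \circ A^{**}$ for a central projection $c$, so that $A/\ker\pi$ embeds isometrically as a Jordan $*$-subalgebra of $(1-c)\circ A^{**}$), but it is established nowhere in the paper, so as written your proof rests on an unproved ingredient whenever $\pi$ fails to be injective (when $\pi$ is injective you could simply quote $A$ itself and there is no issue).

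The repair is exactly the paper's proof, and it renders Proposition~\ref{JCStAlgSubtriple}, the image $E$, and your TRO isomorphism $\rho$ unnecessary: apply the weak*-continuous extension to $\pi$ itself rather than to the quotient isometry. With $\psi \colon A^{**} \to \BH$ extending $\pi$ and $u = \psi(1)$, the identities $\tp{1}{1}{a} = a$, $\tp{1}{a}{1} = a^*$ and $\tp{a}{1}{b} = a \circ b$ in $A^{**}$ show at once that $\pi(A) \subseteq \BH_2(u)$ and that $\pi \colon A \to \BH_2(u)$ is a Jordan $*$-homomorphism for the native $C^*$-structure of the Peirce 2-space. The universal property of $C^*_{\mathrm{J}}(A)$ then yields a $*$-homomorphism $\tilde{\pi} \colon C^*_{\mathrm{J}}(A) \to \BH_2(u)$ with $\tilde{\pi} \circ \beta_A = \pi$, and Lemma~\ref{TROPeirce2}(a) says precisely that $\tilde{\pi}$, viewed as a map into $\BH$, is a TRO homomorphism; since $\beta_A(A)$ generates $C^*_{\mathrm{J}}(A)$ as a TRO, the image of $\tilde{\pi}$ is $\TRO(\pi(A)) \subseteq T$, so $\tilde{\pi}$ indeed maps into $T$, and uniqueness follows from (a) as you say. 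This route avoids quotients entirely and is where Lemma~\ref{TROPeirce2}(a), rather than part (b), does the work.
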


\begin{proof}
By Lemma~\ref{TROofAlgIsAlg} we have $\TRO(\beta_A(A)) =
C^*_{\mathrm{J}}(A)$.
Given a triple homomorphism $\pi \colon A \to \BH$, consider its
weak*-continuous extension $\psi \colon A^{**} \to \BH$ and put $u =
\psi(1)$. Then $\pi \colon A \to \BH_2(u)$ is a Jordan
$*$-homomorphism into the $C^*$-algebra $\BH_2(u)$, and so induces a
$*$-homomorphism $\tilde{\pi} \colon C^*_{\mathrm{J}}(A) \to \BH_2(u)$ with
$\tilde{\pi} \circ \beta_A = \pi$. By Lemma \ref{TROPeirce2} (a)
$\tilde{\pi} \colon C^*_{\mathrm{J}}(A) \to \BH$ is a TRO homomorphism, as
required.
\end{proof}

\section{Reversibility}
\label{SectionReversibility}

By definition (see \cite[2.3.2]{HOS}, for example) a \JCstar-subalgebra
$A$ of a $C^*$-algebra $B$ is called \emph{reversible} in $B$ if
\[
a_1 a_2  \cdots a_{n}
+
a_n  \cdots a_{2} a_{1} \in A
\]
whenever $a_1, \ldots, a_n \in A$. An equivalent condition is
\[
a_1 a_2^* a_3 \cdots a_{2n}^* a_{2n+1}
+
a_{2n+1} a_{2n}^*  \cdots a_{2}^* a_{1} \in A
\]
whenever $a_1, \ldots, a_{2n+1} \in A$,
since if $A$ satisfies the latter condition in $B$, then so does
$A^{**}$ in $B^{**}$ implying (since $A^{**}$ has an identity) that
$A^{**}$ is reversible in $B^{**}$ and hence that $A = A^{**} \cap B$
is reversible in $B$.

A \JCstar-algebra is said to be \emph{universally reversible} if its
canonical image in $C^*_{\mathrm{J}}(A)$ is reversible \cite{HancheOlsenJC} (the
reversibility of $A$ is equivalent to that of $A_{sa}$).

We shall now introduce the notion of reversibility for
\JCstar-triples.

\begin{definition}
\label{reverseibilityDef}
A \JCstar -subtriple $E$ of a TRO $T$ is said to be \emph{reversible}
in $T$ if 
\[
a_1 a_2^* a_3 \cdots a_{2n}^* a_{2n+1}
+
a_{2n+1} a_{2n}^* \cdots a_{2}^* a_{1} \in E
\]
whenever $a_1, \ldots, a_{2n+1} \in E$

We say that a \JCstar -triple $E$ is \emph{universally reversible} if
$\utro E(E)$ is reversible in $\TROu(E)$.
\end{definition}

By this definition and the preamble, the reversibility of a
\JCstar-algebra is equivalent to its reversibility as a \JCstar-triple
and, via Proposition~\ref{TROuOfJCalg}, a \JCstar-algebra is
universally reversible as a \JCstar-triple if and only if it is
universally reversible as a \JCstar-algebra.

Following \cite{HancheOlsenJC}, we shall proceed to obtain a serviceable
characterisation of the universal TRO of a universally reversible
\JCstar-triple.

\begin{lemma}
\label{canonicalinvolutionTRO}
The following are equivalent for a \JCstar-triple $E$.
\begin{enumerate}[(a)]
\item $E$ is universally reversible;
\item
if $\pi \colon E \to T$ is a triple homomorphism into a TRO, then
$\pi(E)$ is reversible in $T$;
\item 
\label{FPpropPhi}
$\utro E(E) = \{ a \in \TROu(E) : \Phi(a) = a \}$ (where $\Phi$ is the
canonical involution of $\TROu(E)$).
\end{enumerate}
\end{lemma}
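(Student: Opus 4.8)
The plan is to organise the three conditions into two linked equivalences: the pair (a) $\Leftrightarrow$ (b), which should fall out quickly from the universal property of $\TROu(E)$, and the pair (a) $\Leftrightarrow$ (c), whose engine is the symmetrisation map attached to the canonical involution $\Phi$.

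First I would dispose of (a) $\Leftrightarrow$ (b). The implication (b) $\Rightarrow$ (a) is immediate on specialising $\pi$ to the universal embedding $\utro E \colon E \to \TROu(E)$: then (b) asserts exactly that $\utro E(E)$ is reversible in $\TROu(E)$. For (a) $\Rightarrow$ (b), given a triple homomorphism $\pi \colon E \to T$ into a TRO, Corollary~\ref{TROuXexists}(\ref{TROuUniversalProp}) supplies a TRO homomorphism $\tilde\pi \colon \TROu(E) \to T$ with $\tilde\pi \circ \utro E = \pi$. Writing each $b_i \in \pi(E)$ as $\tilde\pi(\utro E(e_i))$ and using that a TRO homomorphism preserves the iterated products $a_1 a_2^* a_3 \cdots a_{2n}^* a_{2n+1}$, I would apply $\tilde\pi$ to the reversibility expression for the $\utro E(e_i)$; invoking (a) then shows $\pi(E)$ is reversible in $T$.

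The core is (a) $\Leftrightarrow$ (c), and I would build it on the contractive idempotent $P = \tfrac{1}{2}(\id + \Phi)$ on $\TROu(E)$. By Theorem~\ref{canonicalinvolution}, $\Phi$ is an isometric TRO antiautomorphism of order $2$ fixing $\utro E(E)$ pointwise, so $P$ is a bounded linear projection whose range is precisely the fixed-point set $\{a : \Phi(a) = a\}$; note that $\utro E(E) \subseteq \{a : \Phi(a) = a\}$ holds unconditionally. Since $\Phi$ fixes the generators and is a TRO antiautomorphism, it reverses each generating monomial $y = x_1 x_2^* \cdots x_{2n}^* x_{2n+1}$ (with $x_i \in \utro E(E)$) to $\Phi(y) = x_{2n+1} x_{2n}^* \cdots x_2^* x_1$, so that $2P(y) = y + \Phi(y)$ is exactly the reversibility sum. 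For (c) $\Rightarrow$ (a): the element $y + \Phi(y)$ is $\Phi$-fixed because $\Phi^2 = \id$, so (c) places it in $\utro E(E)$, which is the reversibility relation. For (a) $\Rightarrow$ (c): by (a), $P$ carries every generating monomial into $\utro E(E)$, hence by linearity carries the dense linear span of such monomials into $\utro E(E)$; as $P$ is continuous and $\utro E(E)$ is norm closed (being the isometric image of the Banach space $E$ under $\utro E$, cf. Lemma~\ref{TROmorphsAreCC}(a)), I conclude $P(\TROu(E)) \subseteq \utro E(E)$. Any $\Phi$-fixed $a$ then satisfies $a = P(a) \in \utro E(E)$, giving the inclusion $\{a : \Phi(a) = a\} \subseteq \utro E(E)$ and hence (c).

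The step I expect to be the main obstacle is the inclusion $\{a : \Phi(a) = a\} \subseteq \utro E(E)$ in (a) $\Rightarrow$ (c): one must upgrade the reversibility relation, which a priori only controls the symmetrisation of individual generating words, to a statement about the entire fixed-point space. The decisive points will be that $P$ is a genuinely bounded projection, guaranteed by the order-$2$ isometry $\Phi$ of Theorem~\ref{canonicalinvolution}, and that $\utro E(E)$ is norm closed, so that the containment $P(\text{monomials}) \subseteq \utro E(E)$ propagates by continuity to all of $\TROu(E)$.
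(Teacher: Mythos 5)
Your proposal is correct and follows essentially the same route as the paper: (a) $\Leftrightarrow$ (b) via the universal property of $\TROu(E)$, and (a) $\Leftrightarrow$ (c) by symmetrising with the canonical involution, observing that $b + \Phi(b)$ lies in $\utro E(E)$ for all generating monomials and hence, by linearity, density and continuity, for all $b \in \TROu(E)$. Your projection $P = \tfrac12(\id + \Phi)$ merely repackages the paper's map $b \mapsto b + \Phi(b)$, making explicit the density/closedness argument the paper compresses into ``and hence holds for all $b$''.
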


\begin{proof}
The implication (a) $\Rightarrow$ (b) is a simple consequence of the
universal property, (c) $\Rightarrow$ (a) is immediate from the fact
that $\Phi$ fixes each point of $\utro E (E)$ and (b) $\Rightarrow$
(a) is clear.
Assume (a). Then the condition $b + \Phi(b) \in \utro E (E)$ holds for
all $b$ of the form $a_1 a_2^* \cdots a_{2n}^* a_{2n+1}$ with the
$a_i \in \utro E (E)$, and hence holds for all $b \in \TROu(E)$.
Thus if $b \in \TROu(E)$ with $\Phi(b) = b$, then
$b = (b + \Phi(b))/2 \in \utro E(E)$, proving (c).
\end{proof}

\begin{lemma}
\label{PhiInvIdeals}
Let $E$ be a universally reversible \JCstar -triple and
$\mathcal{I}$ an  ideal of $ \TROu(E)$
such that
$\utro E(E) \cap \mathcal{I}= \{0\}$
and $\Phi(\mathcal{I}) = \mathcal{I}$. Then $\mathcal{I} = \{0\}$.
\end{lemma}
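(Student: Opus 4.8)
The plan is to determine how the canonical involution $\Phi$ acts on $\mathcal{I}$, and then to transfer the question to the bidual $\TROu(E)^{**}$, where weak*-closed ideals split off as direct summands and a simple eigenspace argument finishes the job. Throughout I write $T = \TROu(E)$ and identify $E$ with $\utro E(E) \subseteq T$.

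First I would show that $\Phi$ restricts to $-\id$ on $\mathcal{I}$. For $x \in \mathcal{I}$ the element $x + \Phi(x)$ is fixed by $\Phi$, because $\Phi$ has order $2$ (Theorem~\ref{canonicalinvolution}); hence $x + \Phi(x) \in \utro E(E)$ by the fixed-point characterisation of universal reversibility in Lemma~\ref{canonicalinvolutionTRO}. On the other hand $\Phi(x) \in \mathcal{I}$ since $\Phi(\mathcal{I}) = \mathcal{I}$, so $x + \Phi(x) \in \utro E(E) \cap \mathcal{I} = \{0\}$, and therefore $\Phi(x) = -x$. Thus $\Phi$ is $-\id$ on $\mathcal{I}$ and, by hypothesis, $+\id$ on $\utro E(E)$.

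Next I would pass to $T^{**}$, which is a $W^*$-TRO in which the ternary product is separately weak*-continuous and in which $T$ is weak*-dense. The bitranspose $\Phi^{**}$ is a weak*-continuous TRO antiautomorphism of order $2$ extending $\Phi$, and $\mathcal{I}^{**}$ embeds in $T^{**}$ as a weak*-closed, $\Phi^{**}$-invariant ideal. The key structural input is that a weak*-closed ideal of a $W^*$-TRO is complemented by a unique orthogonal weak*-closed ideal $\mathcal{K}$, giving $T^{**} = \mathcal{K} \oplus \mathcal{I}^{**}$; uniqueness of the complement forces $\Phi^{**}(\mathcal{K}) = \mathcal{K}$. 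Since $\Phi = -\id$ on the weak*-dense subset $\mathcal{I}$ of $\mathcal{I}^{**}$, weak*-continuity yields $\Phi^{**} = -\id$ on all of $\mathcal{I}^{**}$. I would then deduce $\utro E(E) \subseteq \mathcal{K}$: writing $e = e_{\mathcal{K}} + e_{\mathcal{I}}$ for $e \in \utro E(E)$ along the decomposition, the fact that $\Phi^{**}$ preserves each summand and acts as $-\id$ on $\mathcal{I}^{**}$ lets me compare $\mathcal{I}^{**}$-components in $\Phi^{**}(e) = e$ to get $e_{\mathcal{I}} = -e_{\mathcal{I}}$, i.e.\ $e_{\mathcal{I}} = 0$. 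Now $\mathcal{K}$ is a weak*-closed sub-TRO containing $\utro E(E)$; but $\utro E(E)$ generates $T$ as a TRO and $T$ is weak*-dense in $T^{**}$, so the only weak*-closed sub-TRO containing $\utro E(E)$ is $T^{**}$ itself. Hence $\mathcal{K} = T^{**}$, forcing $\mathcal{I} \subseteq \mathcal{I}^{**} = \{0\}$.

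The first, purely algebraic, step is routine. I expect the main obstacle to be the bidual structure theory: verifying that $T^{**}$ is a $W^*$-TRO on which $\Phi^{**}$ and the ternary product remain weak*-continuous, and, above all, that the $\Phi^{**}$-invariant weak*-closed ideal $\mathcal{I}^{**}$ splits off as an orthogonal direct summand $\mathcal{K}$. Once this complementation is available, the eigenspace bookkeeping and the generation argument are immediate.
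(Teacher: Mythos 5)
You follow the paper for the first step and then genuinely diverge, and your route also works. Both you and the paper begin identically: since $\Phi$ has order $2$ and $E$ is universally reversible, $b + \Phi(b) \in \utro E(E) \cap \mathcal{I} = \{0\}$ for $b \in \mathcal{I}$, so $\Phi$ is $-\id$ on $\mathcal{I}$ and $+\id$ on $\utro E(E)$. From there the paper stays entirely inside $\TROu(E)$ and argues algebraically: applying the antiautomorphism $\Phi$ to triple products built from $a \in \utro E(E)$ and $b \in \mathcal{I}$ (which lie in $\mathcal{I}$, hence are negated by $\Phi$, while the sign coming from the antiautomorphism computation disagrees) forces $b^*a = 0$ for every $a \in \utro E(E)$; since $\utro E(E)$ generates $\TROu(E)$, this gives $b^*\TROu(E) = \{0\}$ and so $b = 0$ --- a two-line, element-level argument using nothing beyond the definitions. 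You instead pass to the bidual: $\TROu(E)^{**}$ is a $W^*$-TRO with separately weak*-continuous product, the weak*-closure $\mathcal{I}^{**}$ is a $\Phi^{**}$-invariant weak*-closed ideal, and such an ideal splits off as an orthogonal direct summand with unique complement $\mathcal{K}$ (this is the fact that weak*-closed TRO ideals correspond to central projections of the linking von Neumann algebra); your eigenspace comparison then puts $\utro E(E)$ inside $\mathcal{K}$, and generation plus weak*-density forces $\mathcal{K} = \TROu(E)^{**}$, i.e.\ $\mathcal{I} = \{0\}$. Your supporting claims are all correct, and the uniqueness argument you need ($\mathcal{K} = \{z : z \perp \mathcal{I}^{**}\}$, hence $\Phi^{**}$-invariant) does go through; what your approach costs is the non-trivial (though standard, citable to Zettl/Horn/Effros--Ruan) $W^*$-TRO structure theory, which you correctly flag as the main obstacle, whereas the paper's proof is self-contained and elementary; what it buys is a conceptual, computation-free argument that isolates exactly where universal reversibility and the invariance of $\mathcal{I}$ are used.
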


\begin{proof}
Let $b \in \mathcal{I}$. Then $b + \Phi (b) \in \utro E(E) \cap
\mathcal{I}$ so that $\Phi(b) = -b$.
Thus for $a \in \utro E (E)$, since $\tp aab  \in I$ we have
$-\tp aa b = \Phi(\tp aab) = \tp aab $ and hence that $b^* a = 0$. It
follows that $b^*\TROu(E) = \{0\}$, giving $b = 0$.
\end{proof}

\begin{theorem}
\label{ThmUnivRev}
Let $E$ be a universally reversible \JCstar -triple, let
$\pi \colon E \to \BH$ be an injective triple
homomorphism
and let $\tilde{\pi} \colon \TROu(E) \to \BH$ be the TRO homomorphism
so that $\tilde{\pi} \circ \utro E = \pi$. Suppose
there is a TRO
antiautomorphism $\psi$ of $\TRO(E)$ so that $\psi \circ \pi = \pi$. Then
$\tilde{\pi}$ is a isomorphism. (Hence,
$(\TROu(E), \utro E) \equiv (\TRO(\pi(E)), \pi)
$ with canonical involution $\psi$.)
\end{theorem}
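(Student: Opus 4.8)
The plan is to show that $\tilde\pi$ is injective, from which the conclusion follows immediately: by Lemma~\ref{TROmorphsAreCC}(b) an injective TRO homomorphism is a complete isometry onto its (closed) image, and since $\utro{E}(E)$ generates $\TROu(E)$ as a TRO while $\tilde\pi(\utro{E}(E)) = \pi(E)$, that image is exactly $\TRO(\pi(E))$. Thus the whole argument reduces to proving that the kernel $\mathcal{I} = \ker\tilde\pi$ is zero, and the vehicle for this will be Lemma~\ref{PhiInvIdeals}.

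First I would record the two easier hypotheses of Lemma~\ref{PhiInvIdeals}. The kernel $\mathcal{I}$ is an ideal of $\TROu(E)$, and $\mathcal{I} \cap \utro{E}(E) = \{0\}$: if $\utro{E}(x) \in \mathcal{I}$ then $\pi(x) = \tilde\pi(\utro{E}(x)) = 0$, whence $x = 0$ by injectivity of $\pi$, so $\utro{E}(x)=0$. It then remains only to verify the invariance condition $\Phi(\mathcal{I}) = \mathcal{I}$, where $\Phi$ is the canonical involution of $\TROu(E)$ furnished by Theorem~\ref{canonicalinvolution}.

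The crux is the intertwining identity
\[
\tilde\pi \circ \Phi = \psi \circ \tilde\pi .
\]
Both sides are norm-continuous TRO antihomomorphisms from $\TROu(E)$ into $\BH$: the left side is the composite of the TRO antiautomorphism $\Phi$ with the TRO homomorphism $\tilde\pi$, and the right side is the composite of $\tilde\pi$ with the TRO antiautomorphism $\psi$. On the generating set $\utro{E}(E)$ the two maps agree, because $\Phi$ fixes $\utro{E}(E)$ pointwise (Theorem~\ref{canonicalinvolution}(b)) while $\psi \circ \pi = \pi$, so that each map sends $\utro{E}(x)$ to $\pi(x)$. Since $\utro{E}(E)$ generates $\TROu(E)$ as a TRO, and two continuous TRO antihomomorphisms agreeing on a generating set must agree everywhere (their agreement set is a closed subspace stable under the triple product), the identity holds on all of $\TROu(E)$.

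Granted the identity, for $a \in \mathcal{I}$ we get $\tilde\pi(\Phi(a)) = \psi(\tilde\pi(a)) = 0$, so $\Phi(\mathcal{I}) \subseteq \mathcal{I}$; as $\Phi$ is involutory (Theorem~\ref{canonicalinvolution}(c)) this gives $\Phi(\mathcal{I}) = \mathcal{I}$. Lemma~\ref{PhiInvIdeals} then yields $\mathcal{I} = \{0\}$, so $\tilde\pi$ is a TRO isomorphism onto $\TRO(\pi(E))$. For the final parenthetical claim, transporting $\Phi$ through this isomorphism gives $\tilde\pi \circ \Phi \circ \tilde\pi^{-1} = \psi$, exhibiting $\psi$ as the canonical involution of $\TRO(\pi(E))$ under $(\TROu(E), \utro{E}) \equiv (\TRO(\pi(E)), \pi)$; alternatively, this is immediate from the uniqueness clause of Theorem~\ref{canonicalinvolution}(b), since $\psi$ is a TRO antiautomorphism fixing $\pi(E)$. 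I expect the intertwining identity to be the main obstacle, chiefly the care needed to confirm that both composites are genuinely antihomomorphisms and that matching them on the generators of $\TROu(E)$ is enough to force their equality.
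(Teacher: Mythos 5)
Your proof is correct and follows essentially the same route as the paper's: identify $\mathcal{I} = \ker\tilde\pi$ as an ideal meeting $\utro{E}(E)$ trivially, establish the intertwining identity $\psi \circ \tilde\pi = \tilde\pi \circ \Phi$ by comparing maps on the generating set $\utro{E}(E)$, deduce $\Phi(\mathcal{I}) \subseteq \mathcal{I}$, and invoke Lemma~\ref{PhiInvIdeals}. The only cosmetic difference is that the paper compares the two TRO \emph{homomorphisms} $\psi \circ \tilde\pi \circ \Phi$ and $\tilde\pi$, whereas you compare the two \emph{antihomomorphisms} $\tilde\pi \circ \Phi$ and $\psi \circ \tilde\pi$ directly; both are valid for the same reason.
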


\begin{proof}
Let $\mathcal{I} = \ker \tilde{\pi}$.
We have $\utro E (E) \cap \mathcal {I} = \{0\}$ since $\pi$ is
injective. By assumption, the TRO homomorphism $\psi \circ \tilde{\pi}
\circ \Phi$ agrees with $\tilde{\pi}$ on $\utro E (E)$ and hence on
$\TROu(E)$, giving $\psi \circ \tilde{\pi} = \tilde{\pi} \circ \Phi$.
It follows that $\Phi(\mathcal{I}) \subseteq \mathcal{I}$. Hence
$\mathcal{I} = \{0\}$ by Lemma~\ref{PhiInvIdeals}.
\end{proof}

\begin{corollary}
\label{TROuUnivRevT}
Let $T$ be a universally reversible TRO in a $C^*$-algebra $A$.
Suppose $T$ has no ideals of codimension one and there is a TRO
antiautomorphism $\theta \colon A \to A$ of order $2$. Then $\TROu(T)
= T \oplus \theta(T)$ with universal embedding $a \mapsto a \oplus
\theta(a)$.
\end{corollary}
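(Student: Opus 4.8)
The plan is to realise the putative universal TRO as $\TRO(\pi(T))$ for the injective triple homomorphism $\pi\colon T\to A\oplus A$, $\pi(a)=a\oplus\theta(a)$, and then to invoke Theorem~\ref{ThmUnivRev} (whose reversibility hypothesis is exactly our standing assumption that $T$ is universally reversible). First I would check that $\pi$ is a triple homomorphism: since $\theta$ is a TRO antiautomorphism it reverses ternary products, $\theta(\trop abc)=\trop{\theta(c)}{\theta(b)}{\theta(a)}$, and reversing the two outer arguments leaves the symmetrised product unchanged, so $\theta(\tp abc)=\tp{\theta(a)}{\theta(b)}{\theta(c)}$; hence $\pi=(\id,\theta)$ is a triple homomorphism, injective because its first component is the identity. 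Next I would introduce the ``swap-and-twist'' map $\Theta\colon A\oplus A\to A\oplus A$, $\Theta(x\oplus y)=\theta(y)\oplus\theta(x)$, and verify by direct computation that it is a TRO antiautomorphism of order $2$ (using $\theta^2=\id$) fixing $\pi(T)$ pointwise, since $\Theta(a\oplus\theta(a))=\theta(\theta(a))\oplus\theta(a)=a\oplus\theta(a)$. Being a TRO antiautomorphism fixing the generating set $\pi(T)$, $\Theta$ maps $\TRO(\pi(T))$ onto itself, so its restriction $\psi$ is a TRO antiautomorphism of $\TRO(\pi(T))$ with $\psi\circ\pi=\pi$. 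Theorem~\ref{ThmUnivRev} then gives $(\TROu(T),\utro T)\equiv(\TRO(\pi(T)),\pi)$, with canonical involution $\psi=\Theta|$.

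It remains to identify $S:=\TRO(\pi(T))$ with $T\oplus\theta(T)$, and this is where the absence of codimension-one ideals enters. One inclusion is immediate: the coordinate projections $p_1,p_2$ are TRO homomorphisms carrying $\pi(T)$ onto $T$ and onto $\theta(T)$, so they map $S$ onto $T$ and onto $\theta(T)$, whence $S\subseteq T\oplus\theta(T)$. For the reverse inclusion I would set $I\oplus 0:=S\cap(T\oplus 0)=\ker(p_2|_S)$, where $I$ is an ideal of $T$; since $\Theta(T\oplus 0)=0\oplus\theta(T)$ and $\Theta(S)=S$, this gives $S\cap(0\oplus\theta(T))=0\oplus\theta(I)$. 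It then suffices to prove $I=T$: for then $T\oplus 0\subseteq S$, and as $p_2(S)=\theta(T)$ this forces $0\oplus\theta(T)\subseteq S$, so $S=T\oplus\theta(T)$.

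To establish $I=T$ I would pass to the quotient $\bar S:=S/(I\oplus\theta(I))\subseteq (T/I)\oplus(\theta(T)/\theta(I))$. By construction both coordinate projections of $\bar S$ are injective as well as surjective, so $\bar S$ is the graph of a TRO \emph{isomorphism} $\sigma\colon T/I\to\theta(T)/\theta(I)$. On the other hand $\bar S$ is generated by the image of $\pi(T)$, which is precisely the graph of the TRO \emph{antiisomorphism} $\bar\theta\colon T/I\to\theta(T)/\theta(I)$ induced by $\theta$; as this graph lies in $\bar S=\mathrm{graph}(\sigma)$, we must have $\sigma=\bar\theta$. Thus $\bar\theta$ is simultaneously an isomorphism and an antiisomorphism, which forces $\trop xyz=\trop zyx$ throughout $T/I$, i.e.\ $T/I$ is an abelian TRO. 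The main obstacle is the concluding step: a \emph{nonzero} abelian TRO always has an ideal of codimension one (equivalently a TRO character into $\IC$). I would obtain this by passing to the bidual, where the abelian relation makes the relevant factor summand commutative, so a minimal weak*-closed ideal is one-dimensional and yields a weak*-continuous character restricting to a nonzero character on the original TRO. Granting this, a codimension-one ideal of $T/I$ would pull back to one of $T$, contradicting the hypothesis; hence $T/I=\{0\}$, so $I=T$, completing the identification and the proof.
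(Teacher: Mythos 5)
Your overall strategy coincides with the paper's: both proofs introduce the embedding $\pi(a)=a\oplus\theta(a)$ together with the swap-and-twist antiautomorphism (your $\Theta$ is literally the paper's $\psi$), invoke Theorem~\ref{ThmUnivRev} to reduce the corollary to the identity $\TRO(\pi(T))=T\oplus\theta(T)$, and derive that identity from the absence of codimension-one ideals combined with the fact that a nonzero TRO satisfying $\trop{x}{y}{z}=\trop{z}{y}{x}$ has a codimension-one ideal. Where you genuinely differ is in how the abelian quotient is produced. The paper works with the ideal $\mathcal{J}$ of $T$ generated by the elements $\trop{a}{b}{c}-\trop{c}{b}{a}$, showing by explicit manipulation of TRO monomials that $\mathcal{J}\oplus\{0\}\subseteq\TRO(\pi(T))$, so that if $\mathcal{J}\neq T$ a nontrivial TRO homomorphism annihilating $\mathcal{J}$ has abelian image. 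You instead set $I\oplus 0=\TRO(\pi(T))\cap(T\oplus 0)$, use $\Theta$-invariance to identify the other trace as $0\oplus\theta(I)$, and show that $\TRO(\pi(T))/(I\oplus\theta(I))$ is simultaneously the graph of a TRO isomorphism and of the antiisomorphism induced by $\theta$, forcing $T/I$ to be abelian. This graph argument is a cleaner, more structural route that avoids the paper's combinatorial shuffling of products with one entry from the antisymmetrised set; your supporting verifications (that $I$ is an ideal, that the coordinate projections of the quotient are bijective, that the two graphs coincide) all check out.

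The one genuine gap is in your sketch of the auxiliary fact that a nonzero abelian TRO $V$ admits a TRO character. Your bidual argument takes for granted that $V^{**}$ possesses a minimal weak*-closed ideal, and that is exactly the nontrivial point: an abelian $W^*$-TRO in general has none (for example $L^\infty[0,1]$, regarded as a TRO, has no minimal weak*-closed ideals and no weak*-continuous characters). What rescues the argument is that $V^{**}$ is a \emph{bidual}: by the Gelfand--Naimark theorem for \JBstar-triples of Friedman and Russo \cite{FRgn}, the atomic part of $V^{**}$ is nonzero (indeed $V$ maps isometrically into it), so minimal tripotents exist; since in an abelian TRO the Peirce-1 space of any tripotent vanishes, a minimal tripotent $e$ yields a decomposition $V^{**}=\IC e\oplus N$ into orthogonal weak*-closed ideals, and the weak*-continuous projection onto $\IC e$ restricts to a nonzero character of $V$. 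You should either supply this (or an equivalent) argument, or simply quote \cite[Proposition 6.2]{KaupFibreBund}, which is what the paper does at precisely this point.
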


\begin{proof}
We have an injective triple homomorphism $\pi \colon T \to T \oplus
\theta(T)$ ($a \mapsto a \oplus \theta(a)$), a TRO antiautomorphism
$\psi \colon T \oplus \theta(T)$ ($a \oplus \theta(b) \mapsto b \oplus
\theta(a)$), such that $\psi \circ \pi = \pi$. Put $E = \pi(T)$. By
Theorem~\ref{ThmUnivRev}, it is enough to show $\TRO(E) = T \oplus
\theta(T)$.

Given $a, b ,c \in T$, we have $(\trop a b c - \trop c b a , 0) =
\trop{\pi(a)}{\pi(b)}{\pi(c)} - \pi(\trop c b a ) \in \TRO(E)$.
Thus $S \oplus \{0\} \subset \TRO(E)$ where $S$ is the set $\{ \trop a
b c - \trop c b a : a, b, c \in T\}$. The norm closure of all TRO
products $[a_1 \ldots a_{2n+1}]$ ($= a_1 a_2^* \cdots a_{2n}^*
a_{2n+1}$), where the $a_i \in T$ and at least one of them belongs to
$S$, is the ideal $\mathcal{J}$ of $T$ generated by $S$. Since
\[
[a_1 \ldots a_{2n+1}] \oplus 0 = [ \pi(a_1) \cdots (a_i \oplus 0)
\cdots \pi(a_{2n+1})] \in \TRO(E)
\]
whenever $a_i \in S$ with $1 < i < 2n+1$, and correspondingly when
$i = 1$ or $i = 2n+1$, we have that
$\mathcal{J} \oplus \{0\} \subseteq \TRO(E)$. Suppose $\mathcal{J} \neq T$.
We may choose a non-trivial TRO homomorphism $\phi \colon T \to \BH$
vanishing on $\mathcal{J}$, and then $\phi(x)\phi(y)^* \phi(z) =
\phi(z) \phi(y)^* \phi(x)$ for $x, y, z \in T$.
It follows that (see \cite[Proposition 6.2]{KaupFibreBund}) $\phi(T)$
has an ideal of codimension one, as therefore does $T$, a
contradiction. Hence $\mathcal{J} = T$. Therefore $T \oplus \theta(T)
= \TRO(E)$.
\end{proof}

\section{Cartan factors}
\label{SectionCartanFs}

The Cartan factors in question are the \JWstar-triple factors
possessing minimal tripotents. (The two exceptional factors of general
\JBstar-triple theory are beyond our scope.) An exhaustive analysis
may be found in \cite{DangFriedman1987}. A measure of their
significance is that \JCstar-triples may be realised as a weak*-dense
subtriple of an $\ell_\infty$ sum of Cartan factors \cite{FRgn}.
The purpose of
this section is to obtain concrete descriptions of the universal TROs
of Cartan factors and to determine which Cartan factors are
universally reversible.

Given a Hilbert space $H$ we may define a transposition, $x \mapsto
x^t$, on $\BH$ by $x^t(\xi) = \overline{x^*\left(\bar{\xi} \right)}$ where $\xi
\to \bar \xi$ is a conjugation on $H$. By the dimension, $\dim (H)$,
of $H$ we shall mean the cardinality of an orthonormal basis of $H$,
allowing infinite cardinals.

There are four types of Cartan factors, named \emph{rectangular},
\emph{hermitian}, \emph{symplectic} and \emph{spin factors}, which up
to linear isometry may be realised in the following concrete forms:
\begin{description}
\item[\emph{rectangular}, $R_{m,n}$]
$\BH e$ where $e$ is a projection in $\BH$ with $m = \dim (eH) \leq
\dim (H) = n$

\item[\emph{hermitian}, $S_n$]
$\{x \in \BH : x^t = x \}$ for $\dim (H) =n \geq 2$

\item[\emph{symplectic}, $A_n$]
$\{x \in \BH : x^t = -x \}$ for $\dim(H) = n \geq 4$

\item
[\emph{spin}, $\Sp(n)$]
the norm closed linear span, in a unital $C^*$-algebra,
of the identity and a \emph{spin
system} $\{s_i : i \in I\}$ of cardinality $n \geq 2$ of
anti-commuting symmetries (self adjoint unitaries with $s_is_j +
s_js_i = 0$ if $i \neq j$).
\end{description}

Other than the fact that $R_{2,2}$, $S_2$ and $A_4$ are spin factors
(of dimensions $4$, $3$ and $6$ respectively) the types described
above are mutually exclusive. The \emph{rank} of a Cartan factor is
the cardinality of a maximal orthogonal family of minimal tripotents
it contains. By definition, Hilbert spaces are the Cartan factors of
rank 1. Spin factors, $R_{m,n}$ and $S_n$ have rank 2, $m$ and $n$,
respectively, and $A_n$ has rank $\left[ \frac n 2 \right]$ when $n <
\infty$ and rank $n$ otherwise. The reflexive Cartan factors are those
of finite rank.

We shall proceed on a case by case basis roughly distinguished by rank
and pathology.

\subsection*{Hilbert spaces}

Let $H$ be a Hilbert space. Our approach involves the CAR algebra
$\CAR (H)$ acting upon antisymmetric Fock space $\Fock(H)$ brief
details of which \cite{KadisonRingroseII, KadisonRingroseIV, HOS}
are rehearsed below.

We recall that $\Fock(H)$ is the $\ell_2$ direct sum
$\bigoplus_{m=0}^\infty \Lambda^m H$, where $\Lambda^0 H = \IC$ and
for each $m \geq 1$, $\Lambda^m H$ is the Hilbert space of normalised
antisymmetric tensors given by
\[
\xi_1 \wedge \xi_2 \wedge \cdots \wedge \xi_m
= (m!)^{1/2} P_m (\xi_1 \otimes \cdots \otimes \xi_m),
\]
$P_m$ being the projection on $\bigotimes^m H$ given by $P_m =
\frac 1{m!}\sum_{\sigma \in S_m} \sgn (\sigma) u_\sigma$, where $S_m$ is the
symmetric group on $m$ letters and $u_\sigma$ is the unitary
determined by
\[
u_\sigma( \xi_1 \otimes \cdots \otimes \xi_m)
=
\xi_{\sigma(1)} \otimes \cdots \otimes \xi_{\sigma(m)}.
\]

Let $\{ e_i : i \in I\}$ be an orthonormal basis. With respect to a
linear ordering on $I$ $\{ e_{i_1} \wedge \cdots \wedge e_{i_m}: i_1 <
\cdots < i_m \}$ is an orthonormal basis of $\Lambda^m H$. When $H$
has finite dimension $n$, $\Lambda^m H = \{0\}$ for all $m > n$ and
has dimension $\binom  n m$ whenever $m \leq n$, and $\Fock(H)$ has
dimension $2^n$. In general, for each $\xi \in H$ the annihilation
operator $a(\xi) \in \mathcal{B}(\Fock(H))$ (the dual of the creation
operator $c(\xi)$ determined by $c(\xi)\alpha = \xi \wedge \alpha$, $\alpha
\in \Lambda^m H$, $m \geq 0$) depends antilinearly on $\xi$ and
satisfies 
\[
\|a(\xi)\| = \|\xi\|, \quad
a(\xi) (\Lambda^{m+1} H) \subset \Lambda^{m} H
\qquad (\mbox{for all }\xi \in H \mbox{ and } m \geq0),
\]
together with the canonical anticommutation relations (CAR)
\[
a(\xi)a(\eta) + a(\eta)a(\xi) = 0, \quad
a(\xi)a(\eta)^* + a(\eta)^*a(\xi) = \langle \eta, \xi \rangle \id,
\]
(for all $ \xi, \eta \in H$),
where $\id$ is the identity element in $\mathcal{B}(\Fock(H))$.
Writing $a_i = a(e_i)$ for each $i \in I$ the CAR specialise to
\[
a_i a_j + a_j a_i =0, \quad a_i a_j^* +  a_j^* a_i = \delta_{(i,j)}
\id, \qquad \mbox{for all } i, j \in I.
\]
Put $\tilde{H} = \{ a(\xi) : \xi \in H\}$. The $C^*$-algebra generated
by $\tilde{H}$ is the CAR algebra $\CAR(H)$, over $H$. Letting $\xi
\mapsto \bar \xi$ denote the conjugation determined by $\{ e_i : i \in
I\}$ we have that the map
\[
\psi \colon H \to \CAR(H) \quad (\xi \mapsto a\left( \bar \xi \right))
\]
is a triple isomorphism onto the \JCstar-subtriple $\tilde{H}$ of
$\CAR(H)$.

Retaining these notations, we shall show that the TRO generated by
$\tilde{H}$ in $\CAR(H)$ is $\TROu(H)$ and that $\psi = \utro H$.

\begin{theorem}
\label{TROuHTheorem}
$\TROu(H)$ is  TRO generated by
$\tilde{H}$ in $\CAR(H)$ with $\utro H$ given by $\utro H(\xi) =
a\left( \bar \xi \right)$. If $H$ has finite dimension, $n$, then
$\TROu(H)$ coincides with the $\ell_\infty$ sum
$\bigoplus_{m=0}^{n-1} \mathcal{B} \left( \Lambda^{m+1} H, \Lambda^m
H\right)$. In addition, $H$ is universally reversible if and only if
$\dim(H) \leq 2$.
\end{theorem}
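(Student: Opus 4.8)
The plan is to work throughout in the Fock model, exploiting that $\psi\colon H\to\CAR(H)$, $\xi\mapsto a(\bar\xi)$, is a triple isomorphism onto $\tilde{H}$. Writing $T=\TRO(\tilde{H})$, the universal property of Corollary~\ref{TROuXexists} applied to $\psi\colon H\to T$ produces a surjective TRO homomorphism $q\colon\TROu(H)\to T$ with $q\circ\utro H=\psi$, injective on $\utro H(H)$ since $\psi$ is. Everything then reduces to showing $q$ is an isomorphism, equivalently that $T$ itself has the universal property, whence $(T,\psi)\equiv(\TROu(H),\utro H)$ and the stated form of $\utro H$ is read off from $q\circ\utro H=\psi$.

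For finite $\dim H=n$ I would first pin down $T$ concretely. Since $a(\xi)$ lowers Fock degree by one, each alternating word $a(\bar\xi_1)a(\bar\xi_2)^*\cdots a(\bar\xi_{2k+1})$ does too, so $T\subseteq\bigoplus_{m=0}^{n-1}\mathcal{B}(\Lambda^{m+1}H,\Lambda^m H)=:\bigoplus_m M_m$. For the reverse inclusion I would compute the left and right algebras: by the anticommutation relations $a(\xi)a(\eta)^*=\langle\eta,\xi\rangle\one-a(\eta)^*a(\xi)$, the operators $a(\xi)a(\eta)^*\in\mathscr{L}_T$ generate on each $\Lambda^m H$ the same algebra as the one-particle operators $a(\eta)^*a(\xi)$ together with $P_m$; these realise the $\mathfrak{gl}(H)$-action on the irreducible module $\Lambda^m H$, so by Burnside's theorem they exhaust $\mathcal{B}(\Lambda^m H)$. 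Hence $\mathscr{L}_T=\bigoplus_{m=0}^{n-1}\mathcal{B}(\Lambda^m H)$ and symmetrically $\mathscr{R}_T=\bigoplus_{m=1}^{n}\mathcal{B}(\Lambda^m H)$. The block projections thus lie in $\mathscr{L}_T$ and $\mathscr{R}_T$, and $\mathscr{L}_T T=T=T\mathscr{R}_T$ gives $T=\bigoplus_m P_m T P_{m+1}$ with each $P_m T P_{m+1}$ a nonzero $(\mathcal{B}(\Lambda^m H),\mathcal{B}(\Lambda^{m+1}H))$-subbimodule of $M_m$, forced to be all of $M_m$ by a matrix-unit argument. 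So $T=\bigoplus_{m=0}^{n-1}M_m$, a finite $\ell_\infty$-sum of simple matricial TROs.

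The universal property of $T$ is the crux, and I expect it to be the main obstacle. Each block projection gives a triple homomorphism $\pi_m\colon H\to M_m$, $\pi_m(\xi)=P_m a(\bar\xi)P_{m+1}$, with $\TRO(\pi_m(H))=M_m$, and the induced maps satisfy $q=\bigoplus_m\widetilde{\pi_m}$; thus $q$ is injective exactly when $\bigcap_m\ker\widetilde{\pi_m}=\{0\}$. Since a TRO embeds faithfully in the direct sum of its irreducible representations, this intersection vanishes once one knows that $\pi_0,\dots,\pi_{n-1}$ exhaust the irreducible triple representations of $H$. Establishing this classification — that every irreducible Hilbertian subtriple of a TRO is one of these fundamental Fock pieces — is the hard step; I would extract it from the relation $b(\xi)b(\eta)^*b(\zeta)+b(\zeta)b(\eta)^*b(\xi)=\langle\xi,\eta\rangle b(\zeta)+\langle\zeta,\eta\rangle b(\xi)$ obeyed by any triple homomorphism $b$, reducing to the representation theory of the underlying Clifford/CAR relations, or import it from the structure theory of rank-one Cartan factors. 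The infinite-dimensional case I would then obtain by writing $H=\varinjlim H_\lambda$ over finite-dimensional subspaces and using that the generators-and-relations construction of $\TROu$ commutes with direct limits, together with $\CAR(H)=\varinjlim\CAR(H_\lambda)$.

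For universal reversibility I would use the canonical involution $\Phi$ of Theorem~\ref{canonicalinvolution}: in the Fock model it is the restriction to $T$ of the unique $*$-antiautomorphism $\Theta_0$ of $\CAR(H)$ fixing every $a(\xi)$, which exists because the generators satisfy the CAR in the opposite algebra. Computing $\Theta_0(N_i)=\one-N_i$ for the number operators $N_i=a_i^*a_i$ yields $\Theta_0(P_m)=P_{n-m}$, hence $\Theta_0(M_m)=M_{n-1-m}$. By Lemma~\ref{canonicalinvolutionTRO}, $H$ is universally reversible iff $\utro H(H)$ equals the fixed-point space $T^{\Phi}$, i.e. (as $\utro H(H)\subseteq T^{\Phi}$ has dimension $n$) iff $\dim T^{\Phi}=n$. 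A dimension count decides this: each swapped pair $\{M_m,M_{n-1-m}\}$ contributes $\dim M_m=\binom nm\binom n{m+1}$, and a central fixed block (present when $n$ is odd) a further positive amount. For $n\le2$ the only contribution is the pair $\{M_0,M_{n-1}\}$, giving $\dim T^{\Phi}=\binom n0\binom n1=n$, so $H$ is universally reversible; for $n\ge3$ the block $M_1$ contributes strictly more, so $\dim T^{\Phi}>n$ and it is not. For infinite-dimensional $H$ (necessarily of dimension $\ge3$) I would restrict to a three-dimensional subtriple $H_0$: its annihilation operators generate a copy of $\CAR(H_0)$ inside $\CAR(H)$ on which $\Theta_0$ restricts to the corresponding involution, so a length-five word witnessing non-reversibility in $\CAR(\IC^3)$ does the same in $\CAR(H)$.
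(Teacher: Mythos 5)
Your treatment of the central assertion --- that $T=\TRO(\tilde H)$ has the universal property --- contains a genuine gap. You reduce injectivity of $q\colon\TROu(H)\to T$ to the claim that the block maps $\pi_0,\dots,\pi_{n-1}$ exhaust the irreducible triple representations of $H$, and you yourself flag this classification as ``the hard step'', saying you would extract it from the CAR-type relations or import it from the structure theory of rank-one Cartan factors. No proof (and no precise citation) is supplied, and this classification is essentially equivalent in strength to the theorem being proved: once one knows $\TROu(H)=\bigoplus_{m}\mathcal{B}(\Lambda^{m+1}H,\Lambda^m H)$, its irreducible representations are exactly the block projections, while conversely your route needs the classification before it can reach that structure. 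So the crux of the proof is missing. The paper avoids representation theory entirely: from the triple-product \emph{rules} satisfied by \emph{any} image $x_i=\pi(e_i)$ it shows that $\TRO(\pi(H))$ is linearly spanned by the ordered words $x_{i_1}x_{j_1}^*\cdots x_{j_m}^*x_{i_{m+1}}$ with indices as in (\ref{Formb}); that in the Fock model the corresponding $a$-words are linearly independent (via Wick ordering), which already yields the $\ell_\infty$-sum description by a dimension count; and that the assignment (\ref{HilbSpPiTilde}) is then a well-defined TRO homomorphism because reduction to ordered form uses only the formally identical rules. If you wish to complete your route you must actually establish the classification of irreducible Hilbertian triple representations, which is Neal--Russo-type work that this paper deliberately sidesteps.

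The reversibility part, granted the first part of the theorem, is a correct and genuinely different argument: identifying $\Phi$ with the restriction of the unique $*$-antiautomorphism $\Theta_0$ of $\CAR(H)$ fixing each $a(\xi)$, computing $\Theta_0(P_m)=P_{n-m}$ so that $\Phi$ exchanges $\mathcal{B}(\Lambda^{m+1}H,\Lambda^m H)$ with $\mathcal{B}(\Lambda^{n-m}H,\Lambda^{n-m-1}H)$, and comparing $\dim T^{\Phi}$ with $n$ via Lemma~\ref{canonicalinvolutionTRO} is cleaner than the paper's explicit witness computation (\ref{HnotRevEqn}). However, your infinite-dimensional reduction has two unaddressed points. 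First, your finite-dimensional argument is a pure dimension count and produces no ``length-five word''; you would need either to exhibit one (as the paper does) or to note that $T^{\Phi}$ is spanned by the elements $w+\Phi(w)$ over words $w$, so that some word must witness non-reversibility. Second, having found $w+\Phi(w)\in\CAR(H_0)\setminus\tilde H_0$ for a three-dimensional $H_0\subset H$, you must still rule out the possibility $w+\Phi(w)\in\tilde H\setminus\tilde H_0$, i.e.\ prove $\tilde H\cap\CAR(H_0)=\tilde H_0$; this is true but requires an argument. The paper's computation, showing $a_i^*ya_i^*=0$ for \emph{all} $i\neq i_3$ in the full index set and hence $y\in\IC a_{i_3}$ whenever $y\in\tilde H$, handles infinite-dimensional $H$ in one stroke with no such reduction.
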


\begin{proof}
Let $\pi \colon H \to A$ be a triple homomorphism into a $C^*$-algebra
$A$ and put $T = \TRO(\pi(H))$. Then $\pi(H)$ is a Hilbert space with
orthonormal basis $\{x_i : i \in I\}$ where $x_i = \pi(e_i)$ for each $
i \in I$. Given $i$, $j$ and $k$ in $I$ we have the \emph{rules}
\[
x_i x_j^*x_k + x_k x_j^* x_i = 2 \tp{x_i}{x_j}{x_k} =
\delta_{(j,k)}x_i + \delta_{(i,j)}x_k,
\]
so that
\[
x_i x_j^*x_i = 0 \mbox{ and } x_i x_j^*x_j = - x_j x_j^* x_i + x_i
\mbox{ whenever } i \neq j,
\]
and
$x_i x_j^* x_k = - x_k x_j^* x_i$ whenever $i \neq j \neq k$.

To prove the statement we shall suppose, first, that $H$ has finite
dimension $n$ with $I = \{ 1, \ldots, n\}$ (given the standard
ordering). Then $T$ is the norm closed linear span of TRO products of
the form
\begin{equation}
\label{Forma}
x_{i_1} x_{j_1}^* \cdots x_{j_m}^* x_{i_{m+1}}, 
\end{equation}
the indices $ i_r$ and $ j_s $ ranging over $ 1,
\ldots, n$.

Via the above \emph{rules} appropriate shuffling of TRO products reveals
that $T$ is linearly generated by the products of the form
(\ref{Forma}) where
\begin{equation}
\label{Formb}
1 \leq i_1 < \cdots < i_{m+1} \leq n, \quad
1 \leq j_1 < \cdots < j_m \leq n.
\end{equation}
Formally, there are $\sum_{m=0}^{n-1} \binom{n}{m+1} \binom{n}{m}$
such products. In the special case  of $\TRO(\tilde{H})$ we note
that the products
\begin{equation}
\label{Formc}
a_{i_1} a_{j_1}^* \cdots a_{j_m}^* a_{i_{m+1}}, \quad
\mbox{where the indices satisfy (\ref{Formb})}
\end{equation}
are linearly independent (as follows from the linear independence
of the corresponding Wick-ordered products
$a_{j_1}^* \cdots a_{j_m}^* a_{i_1}  \cdots a_{i_{m+1}}$ in
$\CAR(H)$ \cite[10.5.88, 12.4.40]{KadisonRingroseIV}).

Since all such products lie in
$\bigoplus_{m=0}^{n-1} \mathcal{B} \left( \Lambda^{m+1} H, \Lambda^m
H\right)$,
a dimension count shows that this space is exactly
$\TRO(\tilde{H})$. To establish the universal property, consider the
linear map $\tilde{\pi} \colon \TRO(\tilde{H}) \to T$ such that
\begin{equation}
\label{HilbSpPiTilde}
\tilde{\pi} (a_{i_1} a_{j_1}^* \cdots a_{j_m}^* a_{i_{m+1}})
=
x_{i_1} x_{j_1}^* \cdots x_{j_m}^* x_{i_{m+1}}
\end{equation}
whenever the indices satisfy (\ref{Formb}).
Since the $a_i$ and $x_i$ formally satisfy the same TRO relations, as
the above \emph{rules} show, $\tilde{\pi}$ is a TRO homomorphism satisfying
the requirements. This settles the case $I$ finite.

Suppose now that $I$ is infinite (and linearly ordered as in the
preamble). It follows from the finite case settled above that the
linear map $\tilde{\pi}$ defined on the (algebraic) linear span of
$\{ a_{i_1} a_{j_1}^* \cdots a_{j_m}^* a_{i_{m+1}} : \mbox{ the
indices satisfy (\ref{Formb})}, m \geq 0\}$
by the formula
(\ref{HilbSpPiTilde})
obeys the TRO rules $\tilde{\pi}(\trop x y  z) = \trop{\tilde{\pi}(x)}
{\tilde{\pi}(y)} {\tilde{\pi}(z)}$ and (hence) is contractive. Thus
$\tilde{\pi}$ extends to a TRO homomorphism from $\TRO(\tilde{H})$
onto $T$, as required.

We shall now turn to the question of universal reversibility.
If the dimension of $H$ does not exceed 2, then it is easy to see that
$H$ is reversible in $\TROu(H)$.
On the other hand if $H$ has dimension greater than 2, consider $y = x
+ \Phi(x)$ where $x =
a_{i_1} a_{i_1}^* a_{i_2} a_{i_2}^* a_{i_3}$ with the indices
belonging
to $I$ such that $i_1 < i_2 < i_3$. By repeated application of the
CAR,
\begin{equation}
\label{HnotRevEqn}
y
= 2 x
-  a_{i_1} a_{i_1}^* a_{i_3}
-  a_{i_2} a_{i_2}^* a_{i_3}
+ a_{i_3}
\end{equation}
which lies in $\tilde{H}$ if the latter
is reversible in $\TRO(\tilde{H})$.
In which case, since $a_i^* y a_i^* = 0$ for all $i \neq i_3$, the
right hand side of (\ref{HnotRevEqn}) belongs to $\IC a_{i_3}$
contradicting the linear independence of the terms involved.
\end{proof}

\subsection*{Spin factors}
In view of \cite[\S6.2]{HOS}, \cite[\S9.3]{Pisier}
and Proposition~\ref{TROuOfJCalg} the universal
TRO of a spin factor is essentially known.
Let $V$ be a spin factor generated by $1$ and a  spin system indexed
by $I$ in a $C^*$-algebra $A$ and let $A_V$ denote the $C^*$-algebra
generated by $V$ in $A$. We assume $I$ has cardinality at least 2.
Since $V$ is a \JCstar-subalgebra  of $A$, by
Proposition~\ref{TROuOfJCalg} together with the discussion in
\S\ref{SectionReversibility} we have:

\begin{lemma}
\label{SpinsRevLemma}
$(\TROu(V), \utro V) = (C^*_{\mathrm{J}}(V), \beta_V)$, and $V$ is universally
reversible (as a \JCstar-triple) if and only if $\dim(V) \leq 4$.
\end{lemma}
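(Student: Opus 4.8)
The plan is to read off the identification of the universal TRO from the already-established \JCstar-algebra case, and then to settle the reversibility dichotomy by a direct eigenspace computation in the universal $C^*$-algebra. First I would note that, being generated by $1$ and a spin system inside $A$, the spin factor $V$ is a \JCstar-subalgebra of $A$, hence a \JCstar-algebra in its own right. Proposition~\ref{TROuOfJCalg} then applies verbatim and yields $(\TROu(V), \utro V) = (C^*_{\mathrm{J}}(V), \beta_V)$, with canonical involution $\Phi$ the involutory $*$-antiautomorphism of $C^*_{\mathrm{J}}(V)$ fixing $\beta_V(V)$ pointwise. This disposes of the first assertion with no further work.

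For the reversibility statement I would first reduce to a question inside $C^*_{\mathrm{J}}(V)$: by the discussion in \S\ref{SectionReversibility}, $V$ is universally reversible as a \JCstar-triple if and only if $\beta_V(V)$ is reversible in $C^*_{\mathrm{J}}(V)$, and by Lemma~\ref{canonicalinvolutionTRO}(c) this holds precisely when $\beta_V(V)$ is the full fixed-point set of $\Phi$. Since $\Phi$ fixes each symmetry $s_i$ and is an antiautomorphism, it acts on a normal-form Clifford monomial by reversal,
\[
\Phi(s_{i_1}\cdots s_{i_k}) = s_{i_k}\cdots s_{i_1} = (-1)^{\binom{k}{2}} s_{i_1}\cdots s_{i_k},
\]
the sign arising from the $\binom{k}{2}$ transpositions of anticommuting symmetries. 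Hence each monomial is an eigenvector of $\Phi$, fixed exactly when $\binom{k}{2}$ is even, i.e.\ when $k \equiv 0, 1 \pmod 4$, so that the fixed-point space of $\Phi$ is the closed span of all monomials of such degrees.

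Comparing with $V = \SPAN\{1, s_i\}$, which is the span of the monomials of degree $0$ and $1$, the fixed-point space coincides with $V$ if and only if no surviving monomial of degree $k \geq 2$ occurs. The degrees $k \equiv 2, 3 \pmod 4$ are automatically killed by the sign above, so the first problematic degree is $k = 4$; thus $\beta_V(V)$ is reversible exactly when the spin system admits no product of four distinct symmetries, that is, when it has at most three elements, equivalently $\dim(V) \leq 4$. This gives both directions of the stated dichotomy.

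The step I expect to be the main obstacle is the \emph{only if} direction, namely making rigorous the claim that when $\dim(V) \geq 5$ the monomial $s_1 s_2 s_3 s_4$ is genuinely nonzero in $C^*_{\mathrm{J}}(V)$ and lies outside $\beta_V(V)$; equally, the eigenspace description above presupposes that the normal-form monomials are linearly independent and topologically span $C^*_{\mathrm{J}}(V)$. This is exactly the structural description of the universal $C^*$-algebra of a spin $JC$-algebra as a Clifford-type (CAR-like) algebra, which I would supply by citing \cite[\S6.2]{HOS}, with \cite[\S9.3]{Pisier} for the accompanying operator-space picture; it is precisely this known input that lets one declare the universal TRO of a spin factor essentially understood.
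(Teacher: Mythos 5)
Your proposal is correct and matches the paper's route: the identification $(\TROu(V), \utro V) = (C^*_{\mathrm{J}}(V), \beta_V)$ is obtained exactly as in the paper from Proposition~\ref{TROuOfJCalg}, and the reversibility dichotomy rests on the same external input, namely the structure of $C^*_{\mathrm{J}}(V)$ as a Clifford-type algebra from \cite[\S6.2]{HOS}. The only difference is presentational: where the paper simply cites \cite[\S6.2]{HOS} together with the discussion in \S\ref{SectionReversibility} for the fact that spin factors are universally reversible precisely when $\dim(V) \leq 4$, you unpack that citation into the classical argument behind it --- Lemma~\ref{canonicalinvolutionTRO}(c) combined with the sign $(-1)^{\binom{k}{2}}$ of the canonical involution on normal-form monomials --- which is sound provided, as you correctly flag, one imports from \cite{HOS} the linear independence and density of those monomials in $C^*_{\mathrm{J}}(V)$.
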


If the dimension of $V$ is odd (\emph{i.e.} if $I$ has even
cardinality) or infinite then $C^*_{\mathrm{J}}(V)$ is (*-isomorphic to) the CAR
algebra over $\ell_2(I)$ so that $(\TROu(V), \utro V) \equiv (A_V, V
\hookrightarrow A_V)$ and $A_V \cong \CAR(\ell_2(I))$. Thus, when $V$
has odd or infinite dimension every concrete \JCstar-triple
embedding of $V$ is universal. On the other hand when $I$ has
cardinality $2n+1 < \infty$, $V$ has universal embedding in
$M_{2^{n+1}}(\IC)$ with $\TROu(V) = M_{2^n}(\IC) \oplus M_{2^n}(\IC)$,
so that the restrictions of the natural projections $M_{2^n}(\IC)
\oplus M_{2^n}(\IC) \to M_{2^n}(\IC)$ to $\utro V(V)$ cannot be
universal.

In order to provide details sufficient for operator space
considerations (see \S\ref{secOpSpStructs}) we shall recall the
standard representations of finite dimensional spin factors
\cite[\S6]{HOS}. Given $x \in M_2(\IC)$ and $n \geq 1$ let the
$n$-fold tensor product $x \otimes \cdots \otimes x \in M_{2^n}(\IC)$
be denoted $x^n$ and let $x^0 \otimes y = y \otimes x^0 = y$ for all
$y \in M_{2^n}(\IC)$. Let 
$\sigma_1$, $\sigma_2$ and $\sigma_3$ denote the Pauli spin matrices
$\begin{pmatrix}
1 & 0\\ 0 & -1
\end{pmatrix}$,
$\begin{pmatrix}
0 & 1\\ 1 & 0
\end{pmatrix}$,
$\begin{pmatrix}
0 & i\\ -i & 0
\end{pmatrix}$,
respectively.

Fixing $n \geq 1$ and putting $1 = 1_2^n$, we shall write
\[
V_{2n} = \SPAN \{ 1, s_1, \ldots, s_{2n} \}
\mbox{ and }
V_{2n+1} = \SPAN \{ V_{2n} \otimes 1_2 , s_{2n+1} \}
\]
where
$s_1 = \sigma_1 \otimes
1_2^{n-1}$, $ s_2 = \sigma_2 \otimes 1_2^{ n-1}$,
$ s_3 = \sigma_3 \otimes \sigma_1 \otimes 1_2^{ n-2}$,
$ s_4 = \sigma_3 \otimes \sigma_2 \otimes 1_2^{ n-2}$,
\dots, $s_{2n-1} = \sigma_3^{ n-1} \otimes \sigma_1$,
$ s_{2n} = \sigma_3^{ n-1} \otimes \sigma_2$,
and
$s_{2n+1} = \sigma_3^n \otimes \sigma_1$,
noting that $\{ s_1, \ldots, s_{2n} \}$ and
$\{ s_1 \otimes 1_2, \ldots, s_{2n} \otimes 1_2, s_{2n+1} \}$
are spin systems in $M_{2^n}(\IC)$ and $M_{2^{n+1}}(\IC)$
respectively.
With universal embeddings given by the inclusions
$V_{2n} \hookrightarrow M_{2^n}(\IC)$
and
$V_{2n+1} \hookrightarrow M_{2^{n+1}}(\IC)$ the upshot is that
\[
\TROu(V_{2n}) = M_{2^n}(\IC)
\mbox{ and } 
\TROu(V_{2n+1})
= M_{2^n}(\IC) \otimes D_2
= M_{2^n}(\IC) \oplus M_{2^n}(\IC),
\]
where $D_2$ is the diagonal subalgebra of $M_2(\IC)$.

Putting $t_{2n+1} = \sigma_3^n$ and noting that 
$\{ s_1, \ldots, s_{2n}, t_{2n+1}\}$ is a spin system in
$M_{2^n}(\IC)$, we shall write
\[
\tilde{V}_{2n+1} = \SPAN \{ 1, s_1, \ldots, s_{2n}, t_{2n+1}\}.
\]
We define linear maps
$\beta_n \colon \tilde{V}_{2n+1}  \to  \tilde{V}_{2n+1}$ by
\[
\beta_n(x) = x \mbox{ if } x \in V_{2n}, \quad \beta_n(t_{2n+1})
= - t_{2n+1};
\]
$\mu_n \colon \tilde{V}_{2n+1}  \to  V_{2n+1}$ by
\[
\mu_n(x) = x \otimes 1_2
\mbox{ if } x \in V_{2n}, \quad \mu_n(t_{2n+1}) =  s_{2n+1};
\]
$\gamma_n \colon M_{2^{n+1}}(\IC) \to M_{2^{n+1}}(\IC)$ by
\[
\gamma_n(x) = s x s,
\]
where $s = 1_2^n \otimes \sigma_2$, and we note that:
\begin{lemma}
\label{OddSpinsLemma}
\begin{enumerate}[(a)]
\item $\beta_n$ and $\gamma_n$ are Jordan isomorphisms.
\item $\gamma_n$ is a *-isomorphism of $M_{2^{n+1}}(\IC)$ with
$\gamma_n(V_{2n+1}) = V_{2n+1}$
such that $\gamma_n$ acts
identically on $V_{2n} \otimes 1_2$
and
$\gamma_n(s_{2n+1}) = - s_{2n+1}$.
\item
$\tilde{V}_{2n+1} \to M_{2^n}(\IC) \oplus M_{2^n}(\IC)$
($x \mapsto x \oplus \beta_n(x)$) is the universal embedding of
$\tilde{V}_{2n+1}$ into
$\TROu(\tilde{V}_{2n+1})$.
\end{enumerate}
\end{lemma}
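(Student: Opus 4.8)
The plan is to dispatch (a) and (b) by direct computation with the Pauli matrices and to obtain the substantive assertion (c) by transporting the universal embedding of $V_{2n+1}$, already recorded above, through the Jordan isomorphism $\mu_n$. For $\gamma_n$ I would note that $s = 1_2^n \otimes \sigma_2$ is a self-adjoint unitary, so that $\gamma_n(x) = sxs$ is conjugation by $s$ and hence a $*$-automorphism of $M_{2^{n+1}}(\IC)$, in particular a Jordan isomorphism; this gives the $\gamma_n$ part of (a) and the first clause of (b). For $\beta_n$ the point is that $\tilde V_{2n+1}$ is a spin factor, whose Jordan product and involution are completely determined by the relations $s_i \circ s_j = \delta_{ij}\,1$, the self-adjointness of the generators, and the unit $1$. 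Since $\{s_1,\dots,s_{2n},-t_{2n+1}\}$ is again a spin system, $\beta_n$ carries the defining spin system bijectively onto another one while fixing $1$, and so preserves the Jordan product and the involution; thus $\beta_n$ is a Jordan isomorphism. The remaining computations in (b) use only $(A\otimes B)(C\otimes D)=AC\otimes BD$, $\sigma_2^2 = 1_2$ and the anticommutator $\sigma_2\sigma_1 = -\sigma_1\sigma_2$: they yield $\gamma_n(x\otimes 1_2) = x\otimes\sigma_2^2 = x\otimes 1_2$ for every $x$, and $\gamma_n(s_{2n+1}) = \sigma_3^n\otimes\sigma_2\sigma_1\sigma_2 = -\sigma_3^n\otimes\sigma_1 = -s_{2n+1}$. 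As these generate $V_{2n+1}$ and $\gamma_n^2 = \id$, we obtain $\gamma_n(V_{2n+1}) = V_{2n+1}$, completing (b).

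For (c) I would first observe, exactly as for $\beta_n$, that $\mu_n$ sends the spin system $\{s_1,\dots,s_{2n},t_{2n+1}\}$ bijectively onto $\{s_1\otimes 1_2,\dots,s_{2n}\otimes 1_2,s_{2n+1}\}$ and fixes $1$, so it is a Jordan, hence triple, isomorphism $\tilde V_{2n+1}\to V_{2n+1}$. Composing a universal embedding with a triple isomorphism on the source again yields a universal embedding (a direct consequence of the defining universal property and the naturality of $(\TROu,\utr)$ discussed in \S\ref{SectionUniversal}); hence the embedding $V_{2n+1}\hookrightarrow M_{2^{n+1}}(\IC)$ recorded above, for which $\TROu(V_{2n+1}) = M_{2^n}(\IC)\otimes D_2$, pulls back along $\mu_n$ to a universal embedding of $\tilde V_{2n+1}$ with the same universal TRO.

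It then remains to read off this embedding after the TRO isomorphism $M_{2^n}(\IC)\otimes D_2 \to M_{2^n}(\IC)\oplus M_{2^n}(\IC)$, $a\otimes e_{11}+b\otimes e_{22}\mapsto (a,b)$, where $e_{11},e_{22}$ are the diagonal matrix units of $M_2(\IC)$. The crucial observation is that all generators of $V_{2n+1}$ already lie in $M_{2^n}(\IC)\otimes D_2$: one has $1_2 = e_{11}+e_{22}$ and, decisively, $\sigma_1 = e_{11}-e_{22}\in D_2$, so that $s_{2n+1} = \sigma_3^n\otimes\sigma_1 = t_{2n+1}\otimes(e_{11}-e_{22})$. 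Using $\mu_n(x) = x\otimes 1_2$ for $x\in V_{2n}$ and $\mu_n(t_{2n+1}) = s_{2n+1}$, the composite sends $x\otimes 1_2\mapsto (x,x)$ and $s_{2n+1}\mapsto (t_{2n+1},-t_{2n+1})$; comparing with the definition of $\beta_n$ shows it is precisely $z\mapsto z\oplus\beta_n(z)$, as claimed. The only real obstacle here is organisational rather than technical: one must legitimately invoke the naturality of $(\TROu,\utr)$ to transport the universal embedding of $V_{2n+1}$ across $\mu_n$, and keep careful track of which Pauli matrix is the diagonal symmetry, so that $s_{2n+1}$ is recognised as lying in $M_{2^n}(\IC)\otimes D_2$ and splitting as $(t_{2n+1},-t_{2n+1})$. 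Once the identification $\sigma_1 = e_{11}-e_{22}$ is in hand, everything else is routine.
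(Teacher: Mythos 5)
Your proposal is correct: the paper states this lemma without proof (as a consequence of the preceding construction, in particular the already-recorded fact that $V_{2n+1}\hookrightarrow M_{2^{n+1}}(\IC)$ is universal with $\TROu(V_{2n+1})=M_{2^n}(\IC)\otimes D_2$), and your argument is exactly the intended filling-in: direct Pauli computations for (a) and (b), and for (c) the transport of the universal embedding across the triple isomorphism $\mu_n$ followed by the identification $M_{2^n}(\IC)\otimes D_2\cong M_{2^n}(\IC)\oplus M_{2^n}(\IC)$, under which $s_{2n+1}=t_{2n+1}\otimes(e_{11}-e_{22})$ splits as $(t_{2n+1},-t_{2n+1})$. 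In particular your attention to the paper's convention that $\sigma_1$ is the diagonal Pauli matrix is precisely the point that makes the computation come out as $x\mapsto x\oplus\beta_n(x)$.
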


\subsection*{Rectangular factors, rank $\geq 2$}

\begin{theorem}
\label{TROuRmn}
Let $E = \BH e$ where $e$ is a projection in $\BH$ of rank $\geq 2$.
Then $E$ is universally reversible with $\TROu(E) = E \oplus E^t$ and
$\utro E(x) = x \oplus x^t$. It may be supposed that $e^t = e$.
\end{theorem}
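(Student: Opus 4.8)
The plan is to derive the entire statement from Corollary~\ref{TROuUnivRevT}, applied with $A = \BH$, $T = E = \BH e$, and $\theta$ the transposition $x \mapsto x^t$. First I would record the elementary properties of $\theta$: since the transposition is linear with $(xy)^t = y^t x^t$, $(x^*)^t = (x^t)^*$ and $(x^t)^t = x$, it is a TRO antiautomorphism of $\BH$ of order $2$ (indeed $\trop abc^{\,t} = c^t\,(b^t)^*\,a^t = \trop{c^t}{b^t}{a^t}$), and $\theta(\BH e) = e^t\BH$. To match the stated form I would normalise $e$ at the outset: because $\TROu$ depends only on the triple-isomorphism class and any two projections of $\BH$ of equal rank are unitarily equivalent, hence yield TRO-isomorphic factors $\BH e$, I may replace $e$ by a $0$--$1$ diagonal projection relative to an orthonormal basis of $H$ obtained by splicing a basis of $eH$ with a basis of $(1-e)H$; taking the conjugation determined by this basis makes $e^t = e$. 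Then $\theta(E) = E^t = e\BH$, and the external TRO sum $E \oplus E^t$ is precisely the object in the statement. This disposes of the final sentence.

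Three hypotheses of Corollary~\ref{TROuUnivRevT} then remain to be checked. That $E$ is a TRO is clear, since every triple product $\trop{x_1e}{x_2e}{x_3e} = x_1 e x_2^* x_3 e$ ends in $e$, so lands in $\BH e$. For the absence of codimension-one ideals I would use that $\mathrm{rank}(e) \geq 2$ supplies matrix units $e_{ij} \in E$ with $\trop{e_{11}}{e_{21}}{e_{22}} = e_{12}$ while $\trop{e_{22}}{e_{21}}{e_{11}} = 0$; this asymmetry shows that $E$ carries no nonzero triple homomorphism onto $\IC$, so by \cite[Proposition 6.2]{KaupFibreBund} it has no ideal of codimension one.

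The technical heart, and the step I expect to be the main obstacle, is the universal reversibility of $E$. By Lemma~\ref{canonicalinvolutionTRO} it suffices to show that for every triple homomorphism $\pi$ of $E$ into a TRO $T$ the image $\pi(E)$ is reversible in $T$. Writing $v_{ij} = \pi(e_{ij})$ for the image grid, the reversibility test reduces to showing that each symmetrised product $v_{i_1j_1}v_{i_2j_2}^*\cdots v_{i_{2k+1}j_{2k+1}} + v_{i_{2k+1}j_{2k+1}}v_{i_{2k}j_{2k}}^*\cdots v_{i_1j_1}$ lies in $\SPAN\{v_{ij}\}$. The length-three case is immediate, being twice $\tp{v_{i_1j_1}}{v_{i_2j_2}}{v_{i_3j_3}}$. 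For longer words I would induct on the length, using the matrix-unit relations carried by the $v_{ij}$ together with the availability of at least two distinct column indices (this is exactly where $\mathrm{rank} \geq 2$ enters) to route a word through an auxiliary matrix unit and thereby express a symmetrised word of length $2k+1$ as a combination of symmetrised words of strictly smaller length modulo $\SPAN\{v_{ij}\}$. The bookkeeping of this reduction — ensuring the reversal is respected at each stage and that the length genuinely decreases — is the delicate point; a route I would keep in reserve is to pass to a Peirce-$2$ space, identify it via Lemma~\ref{TROPeirce2} with a matrix $C^*$-algebra, and deduce reversibility of $E$ from the universal reversibility of the associated Jordan matrix algebra \cite{HOS}.

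With these three hypotheses established, Corollary~\ref{TROuUnivRevT} gives $\TROu(E) = E \oplus \theta(E) = E \oplus E^t$ with universal embedding $\utro E(x) = x \oplus x^t$, while the universal reversibility proved above furnishes the first assertion of the theorem.
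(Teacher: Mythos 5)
Your architecture --- prove reversibility and absence of codimension\mbox{-}one ideals, then invoke Corollary~\ref{TROuUnivRevT} --- matches the paper only in the infinite-rank case, and the step you yourself flag as the main obstacle is a genuine gap: universal reversibility of $E$ is never established, and it cannot be established in the form you set up. First, when $e$ has infinite rank the closed linear span of the matrix units $e_{ij}$ is $\KH e = \KH \cap \BH e$, a \emph{proper} closed ideal of $E = \BH e$; so checking symmetrised words in the image grid $v_{ij}=\pi(e_{ij})$ says nothing about words in arbitrary elements of $\pi(E)$, and your ``reversibility test'' simply does not reduce to the grid. Second, your reserve route fails structurally: the Peirce-2 space of a tripotent $u \in \BH e$ is the square corner $uu^*\BH e \cong \mathcal{B}(eH)$, a proper subtriple whenever $\mathrm{rank}(e)<\dim H$, and reversibility of that corner gives no control over products involving elements outside it --- this is exactly what makes the genuinely rectangular case nontrivial. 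Third, even for finite rank, the word-length induction you defer is the entire content of the theorem (a rectangular analogue of Hanche-Olsen's reversibility theorem for Jordan matrix algebras); leaving its ``bookkeeping'' open leaves the theorem unproved. There is also a smaller gap of the same flavour in your codimension-one step: the asymmetry $\trop{e_{11}}{e_{21}}{e_{22}}=e_{12}$, $\trop{e_{22}}{e_{21}}{e_{11}}=0$ only forces a character to vanish on the ideal generated by the matrix units, namely $\KH e$, which is proper when $e$ has infinite rank, so a further argument (e.g.\ via the chain of ideals coming from Proposition~\ref{PropTROIdeals}) is needed.

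The paper's proof is organised precisely to avoid the combinatorics you postpone, and it splits into two cases by rank. For $eH$ infinite dimensional it does prove reversibility first, but not via matrix units: any $x_1,\dots,x_{2k+1} \in \BH e$ have ranges spanning a subspace of dimension at most $\dim(eH)$ (cardinal arithmetic), hence lie in a corner $f\BH e$ with $\dim fH = \dim eH$, which is linearly isometric to the $C^*$-algebra $\mathcal{B}(eH)$ and therefore universally reversible by Proposition~\ref{TROuOfJCalg} and \cite{HancheOlsenJC,HOS}; only then is Corollary~\ref{TROuUnivRevT} applied. This exhaustion is unavailable for finite rank $m$ (the $2k+1$ ranges need not fit in an $m$-dimensional space), and there the paper \emph{reverses the logical order}: it constructs $\TROu(E) \cong E \oplus E^t$ directly, patching together the TRO homomorphisms $\pi_K \colon E_K \oplus E_K^t \to \TROu(E)$ furnished on square blocks $E_K \cong M_m(\IC)$ by the known universal $C^*$-algebra of $M_m(\IC)$, building elements $e_{j,i}, f_{i,j}$ in $\TROu(E)$ and verifying that they satisfy the matrix-unit TRO relations; universal reversibility is then read off at the end as the fixed-point set of the canonical involution $x \oplus y^t \mapsto y \oplus x^t$, rather than being an input. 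To salvage your approach you would either have to carry out the grid induction in full for finite rank and find a genuinely different argument for infinite rank, or adopt the paper's order of deduction in the finite-rank case.
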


\begin{proof}
To ease notation we shall denote $\utro E$ by $\utr$ throughout. We
separate two cases.
\begin{enumerate}[(a)]
\item
Suppose $e$ has finite rank $m$.

Choose an orthonormal basis $\{ h_j : j \in J \}$ of $H$ so that
$\{h_i : i \in I\}$ is a basis of $eH$, $I$ being a subset of $J$ of
cardinality $m$, and denote the standard matrix units $h_k \otimes
h_i$ of $\BH$ by $E_{k, i}$ as $(k, j)$ ranges over $J \times J$.

Let $(j, i) \in J \times I$. Our first claim is that, for all $r \in
I$ with $r \neq i$, the elements $e_{j,i}$ and $f_{i, j}$ of
$\TROu(E)$ given by
\[
e_{j, i} = \trop {\utr (E_{j,i})} { \utr(E_{j,r})} {\utr (E_{j, i})},
\quad
f_{i, j} = \utr (E_{j,i}) - e_{j, i}
\]
are well-defined.
To see this take any subset $K \subset J$ of cardinality $m$ and let
$E_K$ denote the linear span of $\{ E_{s,r} : (s, r) \in K \times
J\}$. The latter is a linear isometric copy of $M_m(\IC)$ and hence
(by 
Proposition~\ref{TROuOfJCalg} and
\cite[7.4.15]{HOS} or \cite[Corollary 4.5]{HancheOlsenJC}) there
is a TRO homomorphism $\pi_K \colon E_K \oplus E_K^t \to \TROu(E)$
such that $\pi_K(x \oplus x^t) = \utr(x)$ for all $x \in E_K$.
For any $s \in K$ and $r \in I$ with $r \neq i$, it follows that
\[
\trop{\utr (E_{s,r})} { \utr(E_{s,r})} {\utr (E_{s, i})}
= \pi_K(E_{s,i} \oplus 0).
\]
Thus $e_{j,i}$ and $f_{i,j}$ are well-defined with $e_{j, i} =
\pi_K(E_{j,i} \oplus 0)$ and $f_{i,j} = \pi_K(0 \oplus E_{i,j})$
whenever $K$ is chosen so that $j \in K$. (We note that $E_{i,j} =
E_{j,i}^t = E_{j, i}^*$ for $(j, i) \in J \times I$.)

We next claim that the $e_{j, i}$ satisfy the same  TRO relations as
the $E_{j, i}$. Indeed, let $j, k, s \in J$ and $i, \ell, r \in I$. If
$\{ j, k, s\}$ has cardinality $\leq m$, \emph{a fortiori} if $3 \leq
m$, then the above implies (by a suitable choice of $K \subseteq J$)
\[
\trop{e_{j,i}}{e_{k,\ell}}{e_{s,r}}
= \delta_{(i, \ell)} \delta_{(k,s)} e_{j,r}.
\]
If $m =2$, we conclude that
\begin{eqnarray*}
\trop{e_{j,i}}{e_{k,\ell}}{e_{s,r}} 
&=& \trop {e_{j,i}}
{\trop  {e_{k,\ell}} {e_{k,\ell}} {e_{k,\ell}}  }
{ \trop{e_{s,r}}{e_{s,r}}{e_{s,r}} }
\\
&=& \trop{ \trop{e_{j,i}}{e_{k,\ell}}{e_{k,\ell}} }
{ \trop{e_{s,r}}{e_{s,r}}{e_{k,\ell}} } {e_{s,r}}\\
&=& \delta_{(i, \ell)} \delta_{(k,s)} e_{j,r}.
\end{eqnarray*}
By similar methods, we can also see that the $f_{i,j}$ satisfy the
same  TRO relations as the $E_{i, j}$ and are orthogonal to the $e_{k,
\ell}$.

Since the linear span of $\{ E_{j, i} : (j, i) \in J \times I\}$ is
norm
dense in $E$, it follows that there is a TRO isomorphism $\pi \colon E
\oplus E^t \to \TROu(E)$ sending each $E_{j, i} \oplus E_{r,s}^t$ to
$e_{j,i} + f_{r,s}$, in particular sending $E_{j, i} \oplus
E_{j,i}^t$ to $\utr(E_{j,i})$. Thus $\TROu(E)$ and $\utr$ may be
identified as in the statement.

The corresponding canonical involution is given by $x \oplus y^t
\mapsto y \oplus x^t$, the set of fixed points being $\{ x \oplus x^t
: x \in E\}$, proving that $E$ is universally reversible.

\item
Let $eH$ be infinite dimensional.

Consider $x_1, \ldots , x_{2k+1} \in E$. Since the dimension of the
Hilbert subspace generated by $x_1H + \cdots + x_{2k+1}H$ cannot exceed
that of $eH$ there is a projection $f$ in $\BH$ such that $eH$ and
$fH$ have the same dimension and $x_1, \ldots, x_{2k+1} \in f \BH e$,
which is universally reversible since linearly isometric to
$\mathcal{B}(eH)$ (via
Proposition~\ref{TROuOfJCalg} and
\cite[7.4.15]{HOS} or \cite[Corollary 4.5]{HancheOlsenJC}). Hence,
considering the
representation of \JCstar-triples
$f \BH e \to \utr(f \BH e) \subseteq \utr(E)$
and taking $y_i = \utr(x_i)$ for $i = 1, \ldots, 2k+1$,
we have $[y_1 \ldots y_{2k+1}] \in \utr(f \BH e) \subseteq
\utr(E)$. This shows that $E$ is universally reversible, and the
result follows from Corollary~\ref{TROuUnivRevT}.
\qedhere
\end{enumerate}
\end{proof}

\subsection*{Hermitian and symplectic factors}

\begin{theorem}
\label{HermAndSympThm}
Let $H$ be a Hilbert space of dimension $n$, possibly infinite. Let
$E$ denote
\begin{enumerate}[(a)]
\item $S_n = \{x \in \BH : x^t = x\}$ where $2 \leq n \leq \infty$, or
\item $A_n = \{x \in \BH : x^t = -x\}$ where $5 \leq n \leq \infty$.
\end{enumerate}
Then $E$ is universally reversible and $(\TROu(E), \utro E)  = (\BH,
\mathrm{inclusion})$.
\end{theorem}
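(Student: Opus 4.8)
The plan is to deduce both assertions from Theorem~\ref{ThmUnivRev}, taking $\pi$ to be the inclusion $E \hookrightarrow \BH$ (an injective triple homomorphism) and taking $\psi$ to be the transposition $t \colon x \mapsto x^t$ in case (a) and $\psi \colon x \mapsto -x^t$ in case (b); throughout I fix an orthonormal basis realising the conjugation, so that $t$ is the literal matrix transpose. Since $(xy)^t = y^t x^t$ and $(x^*)^t = (x^t)^*$, the map $t$ is a $*$-antiautomorphism of $\BH$ of order $2$, and a direct check gives $t(\trop abc) = \trop{t(c)}{t(b)}{t(a)}$; the same identity holds for $-t$, so each of $t$ and $-t$ is an order-$2$ TRO antiautomorphism of $\BH$ (note $-t$ is not a $*$-antiautomorphism, but this is not needed). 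Its fixed-point set is exactly $S_n$ for $t$ and $A_n$ for $-t$, so in either case $\psi$ fixes $E$ pointwise, whence $\psi \circ \pi = \pi$ and $\psi$ leaves $\TRO(E)$ invariant. Granting the two facts (i) $E$ is universally reversible and (ii) $\TRO(E) = \BH$, Theorem~\ref{ThmUnivRev} then produces an isomorphism $\tilde\pi \colon \TROu(E) \to \BH$ with $\tilde\pi \circ \utro E = \pi$, giving $(\TROu(E), \utro E) \equiv (\TRO(E), \mathrm{inclusion}) = (\BH, \mathrm{inclusion})$ with canonical involution $\psi$, as required; as a consistency check, the fixed-point set of $\psi$ being $\utro E(E)$ re-expresses universal reversibility via Lemma~\ref{canonicalinvolutionTRO}.

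For (ii) I would first treat finite dimensions by an explicit matrix-unit calculation. With matrix units $e_{ij}$ adapted to the chosen basis, in case (a) one has $e_{ij} = \trop{e_{ii}}{e_{ij}+e_{ji}}{e_{jj}}$, a TRO product of symmetric elements, so $\TRO(S_n)$ contains every $e_{ij}$ and hence equals $\BH$; in case (b) a short computation such as $\trop{e_{13}-e_{31}}{e_{23}-e_{32}}{e_{12}-e_{21}} = -e_{11}$ (where $n \ge 5$ guarantees enough distinct indices) places a rank-one projection, and then all matrix units, into $\TRO(A_n)$. In infinite dimensions the finite calculation only yields $\KH$, so the real point is to manufacture the non-compact part of $\BH$. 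In case (a) the identity $I$ lies in $S_n$, making $\TRO(S_n) = C^*(S_n)$ a unital, irreducible $C^*$-algebra containing $\KH$; since $\BH = S_n \oplus A_n$ is the $\pm1$ eigenspace decomposition of $t$, it suffices to show $C^*(S_n) \cap A_n = A_n$, which I would establish by producing, for the relevant antisymmetric operators, partners of the form $W - W^*$ as commutators $ss' - s's$ of symmetric operators $s,s'$ together with symmetric partners $W + W^* \in S_n$, so that $W$, and thus the generator, lies in $C^*(S_n)$. Case (b) is handled by the analogous manufacture of non-compact antisymmetric generators.

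For (i) the two cases are of genuinely different natures. In case (a), $S_n$ is norm-closed and closed under $x \circ y = \tfrac12(xy+yx)$ and under $*$ (both preserve $x^t = x$), so it is a \JCstar-algebra; by Proposition~\ref{TROuOfJCalg} and the remark following Definition~\ref{reverseibilityDef}, $S_n$ is universally reversible as a triple precisely when it is so as a \JCstar-algebra, and the latter is the known universal reversibility of the \JCstar-algebra of symmetric operators (for $n=2$, $S_2$ is the spin factor covered by Lemma~\ref{SpinsRevLemma}). In case (b), $A_n$ is \emph{not} a \JCstar-algebra, since $x \circ y \in S_n$ rather than $A_n$ for $x,y \in A_n$; I would instead reduce to finite dimensions exactly as in the proof of Theorem~\ref{TROuRmn}: any finite family from $A_n$ lies in $P A_n P$ for a finite-rank projection $P = P^t$, and $P A_n P$ is a finite symplectic factor $A_m$ with $m \ge 5$. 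Granting universal reversibility of the finite $A_m$, every odd TRO product of elements of $\utro E(E)$ together with its reverse returns to $\utro E(E)$, which is universal reversibility by Lemma~\ref{canonicalinvolutionTRO}.

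The hard part is the universal reversibility in case (b): unlike $S_n$, the symplectic factor carries no Jordan-algebra structure to exploit, so one cannot merely cite the \JCstar-algebra theory, and everything rests on establishing universal reversibility of the finite symplectic factors $A_m$ ($m \ge 5$), together with the infinite-dimensional generation step in (ii), where matrix units alone give only $\KH$ and the non-compact generators of $\BH$ must be produced from products of elements of $E$. Once (i) and (ii) are secured, the passage through Theorem~\ref{ThmUnivRev} and the identification of $\psi$ as the canonical involution are purely formal.
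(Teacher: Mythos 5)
Your formal skeleton---applying Theorem~\ref{ThmUnivRev} to the inclusion $E \hookrightarrow \BH$ with $\psi(x) = x^t$ in case (a) and $\psi(x) = -x^t$ in case (b)---is sound, and your finite-dimensional matrix-unit computations showing $\TRO(E) = \BH$ are fine. But the proposal then reduces the theorem to exactly its two substantive claims, proves neither, and one of your reductions is actually false. For case (b) with $n$ infinite you claim that any finite family in $A_n$ lies in $PA_nP$ for a \emph{finite-rank} projection $P = P^t$; this fails because elements of $A_n$ can have infinite rank. The step in the proof of Theorem~\ref{TROuRmn} that you appeal to does not compress to finite rank: it uses a projection $f$ with $\dim(fH) = \dim(eH)$, and its whole point is that $f\BH e$ is linearly isometric to the $C^*$-algebra $\mathcal{B}(eH)$, so that \JCstar-algebra theory applies. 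The symplectic analogue compresses $A_n$ only to $PA_nP \cong A_{\dim(PH)}$ with $\dim(PH)$ infinite---the very object under study---so the reduction is circular. Your generation sketch for infinite dimensions is likewise not a proof, and as written its key assertion fails: if $W^t = -W$ then $(W^*)^t = -W^*$, so $(W+W^*)^t = -(W+W^*)$ and $W + W^*$ lies in $A_n$, not in $S_n$. (In case (a) both gaps can be closed at once by citing \cite{HancheOlsenJC}: $S_n$ is universally reversible and $C^*_{\mathrm{J}}(S_n) = \BH$ with the inclusion as universal embedding, which by Proposition~\ref{TROuOfJCalg} already yields the theorem with no need for Theorem~\ref{ThmUnivRev}; this is the paper's proof of (a).)

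What remains---universal reversibility of the symplectic factors---is the mathematical core of case (b), and you explicitly ``grant'' it. Moreover, your guiding remark that the symplectic factor ``carries no Jordan-algebra structure to exploit'' is the opposite of how the paper proceeds: for $n$ even or infinite there is a conjugation $\mathfrak{j}$ with $\mathfrak{j}^2 = -1$ such that $\omega(x) = -\mathfrak{j}x^*\mathfrak{j}$ is a $*$-antiautomorphism of $\BH$ whose fixed-point set $A$ is a \JCstar-algebra (the quaternionic hermitian operators, Jordan $*$-isomorphic to the complexification of $\mathcal{B}(H_{\IH})_{sa}$), and $A_n = uA$ for a suitable unitary $u$; the results of \cite{HancheOlsenJC} give that $A$ is universally reversible with $C^*_{\mathrm{J}}(A) = \BH$, and Proposition~\ref{TROuOfJCalg} together with the TRO automorphism $x \mapsto u^*x$ finishes these cases. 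Odd finite $n \geq 7$ is then obtained by a matrix-unit bootstrap from the even case, and $n = 5$ requires a separate, genuinely delicate argument precisely because $A_4$ is triple isomorphic to the spin factor $V_5$, which is \emph{not} universally reversible---a difficulty your plan never confronts. So the proposal correctly isolates the hard points but supplies no idea for resolving them, and the route it suggests for the infinite-dimensional symplectic case cannot work as described.
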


\begin{proof}
\begin{enumerate}[(a)]
\item 
If $E$ is the Hermitian factor the result is immediate from 
Proposition~\ref{TROuOfJCalg} and the well known facts  \cite[Theorem
2.2 and p. 1070]{HancheOlsenJC} that $E$ is universally reversible and
that $C^*_{\mathrm{J}}(E) = \BH$ with universal \JCstar-algebra embedding $E
\hookrightarrow \BH$.

\item Let $E$ be the symplectic factor $A_n$.
(We could rely on \cite[Lemmas 4.1, 4.2 \&
4.3]{NealRussoTAMS03} for finite $n \geq 5$,
but instead we give a different argument.)

If $n$ is even or infinite, then (see \cite[pp 167--169]{HOS}) there
is a conjugation $\mathfrak{j} \colon H \to H$ with $\mathfrak{j}^2 = -1$ inducing a
quaternionic structure $H_{\IH}$ on $H$ and a $*$-antiautomorphism
$\omega \colon \BH \to \BH$ given by $\omega(x) = - \mathfrak{j} x^*
\mathfrak{j}$ such that
the \JCstar-algebra, $A$, of fixed points of $\omega$ is Jordan $*$
isomorphic to the complexification of the $JC$-algebra
$\mathcal{B}(H_\IH)_{sa}$. By \cite[Theorem
2.2 and p. 1070]{HancheOlsenJC} we have that $A$ is universally
reversible and that $C^*_{\mathrm{J}}(A) = \BH$ with universal embedding $A
\hookrightarrow \BH$.
Moreover $u^* E = A$ where $u$ is the unitary defined by $u(\xi) = -
\overline{ \mathfrak{j}\left( \bar \xi \right)}$ for $\xi \in H$.
Application of Proposition~\ref{TROuOfJCalg} alongside the fact that
$x \mapsto u^* x$ is a TRO automorphism of $\BH$ completes the proof
for $n \geq 6$  even or infinite.

For $n \geq 7$ odd, a straightforward argument applies. Let
$1 \leq r \leq n$, $E = A_n$,
$T = M_n(\IC)$,
$T_{\hat{r}} = \{ x \in T : x_{r,j} = 0 =
x_{j,r} \mbox{ for } 1 \leq j \leq n\}$ (a $*$-subalgebra of $T$
isomorphic to $M_{n-1}(\IC)$) and $E_{\hat{r}} = E \cap T_{\hat{r}}$,
a subtriple of $E$ triple isomorphic to $A_{n-1}$.
Write $E_{i,j}$ ($i, j = 1, \ldots , n$) for the standard
matrix units
of $M_n(\IC)$, $u_{i, j} = \utro E (E_{i,j} - E_{j, i})$.
By the preceeding paragraph we know that
$\TROu(E_{\hat{r}}) = T_{\hat{r}}$ and so there is a TRO homomorphism
$\pi_r \colon T_{\hat{r}} \to \TROu(E)$ with $\pi_r (E_{i,j} - E_{j,
i}) = u_{i,j}$ for $i$ and $j$ both different from $r$, and if $k
\notin \{i, j , r\}$
we have
\[
\trop {u_{i,k}} {u_{i,k}} {u_{i, j}} = \pi_r(E_{i, j}),
\]
hence that $f_{i, j} = \pi_r(E_{i, j})$ is independent of $r \notin
\{i, j\}$ (for $i \neq j$). From $\trop{f_{i,j}}{f_{k, j}}{f_{k,i}} =
\pi_r(E_{i,i})$ (for distinct $i$, $j$, $k$ and $r$) we see that
$f_{i, i} = \pi_r(E_{i,i})$ is also independent of $r \neq i$.
It follows easily that $f_{i, j}$ obey the same TRO
relations as $E_{i, j}$ and hence that the map $M_n(\IC) \to
\TROu(E)$ ($E_{i, j} \mapsto f_{i, j}$) is a TRO isomorphism.
As $x \mapsto - x^t$ is an antiautomorphism of $M_n(\IC)$ with fixed
point set $A_n$, we have that $A_n$ is universally reversible.

For the remaining case, $E = A_5$, a more complicated argument is
required as $A_4$ is triple isomorphic to the
spin factor $V_5$. Working in $A_4 \subset
M_4$, write $v_{i, j} = E_{i, j} - E_{j, i}$ ($1 \leq i \neq j \leq
4$) and $v = v_{1,3} + v_{2,4}$. Then $v$ is unitary,
$s'_1 =  v^*(v_{1,3} - v_{2,4})$,
$s'_2 = v^*(v_{2,3} + v_{1,4})$,
$s'_3 = i v^*(v_{2,3} - v_{1,4})$,
$s'_4 = v^*(v_{1,2} + v_{3,4})$,
$s'_5 = i v^*(v_{1,2} - v_{3,4})$,
are 5 anticommuting symmetries
in $v^* A_4$ which together with $s'_0 = v^*v = 1_4$, span $v^*A_4$.
This provides a Jordan $*$ isomorphism $v^* A_4 \to V_5$ (notation as
in Lemma~\ref{OddSpinsLemma}), mapping $s_i'
\mapsto s_i$ ($1 \leq i \leq 4$) and $s'_5 \mapsto t_5$. Under the
triple isomorphism $A_4 \to v^* A_4 \to V_5$, the map $\beta_2$
corresponds to the isometry of $A_4$ exchanging $v_{1,2}$ and
$v_{3,4}$ but leaving the other $v_{i,j}$ (with $i < j$) fixed. From
Lemma~\ref{OddSpinsLemma} we may realise $\utro{A_4} \colon A_4 \to
M_4(\IC) \oplus M_4(\IC)$ as the linear map given by
$v_{i,j} \mapsto v_{i,j} \oplus
v_{i,j}$ when $i < j$ satisfy $(i, j) \notin \{ (1,2), (3,4) \}$, $v_{1,2}
\mapsto v_{1,2} \oplus v_{3,4}$, $v_{3,4} \mapsto v_{3,4} \oplus
v_{1,2}$.

Let $h_{i,j}$ denote the matrix units $E_{i,j} \oplus 0 \in
\TROu(A_4) = M_4(\IC) \oplus M_4(\IC)$ and $g_{i, j} = 0 \oplus
E_{i,j} \in \TROu(A_4)$ ($1 \leq i, j \leq 4$).
It follows that for $w_{i,j} = \utro{A_4}(v_{i,j})$ we have
\[
w_{i,j} = (h_{i, j} - h_{j, i}) \oplus (g_{i, j} - g_{j, i})
\quad (i < j, (i, j) \notin \{ (1,2), (3,4) \}),
\]
$w_{1,2} = (h_{1, 2} - h_{1, 2}) \oplus (g_{3, 4} - g_{4, 3})$,
$w_{3,4} = (h_{3, 4} - h_{3, 4}) \oplus (g_{1, 2} - g_{2, 1})$,
$
w_{3,4} w_{3,4}^* + 
w_{2,3} w_{2,3}^* + 
w_{2,4} w_{2,4}^* = 2(h_{2,2} + h_{3,3} + h_{4,4}) \oplus (g_{1,1} +
3 g_{2,2} +  g_{3,3} + g_{4,4})$, and hence that there is a polynomial
$P(z) $ without constant term so that $(1_4 - h_{1,1})
\oplus 1_4
= P( w_{3,4} w_{3,4}^* +
w_{2,3} w_{2,3}^* +
w_{2,4} w_{2,4}^*)
$.
Similarly $(1_4 - h_{1,1})
\oplus 1_4 = P( w_{3,4}^* w_{3,4} + w_{2,3}^* w_{2,3} + w_{2,4}^*
w_{2,4})$.

Now we turn to the triple embedding
$\pi_5 \colon A_4 \to \TROu(A_5)$ ($x \mapsto \utro{A_5} (x \oplus
0_1)$) (where $0_1$ means the $1 \times 1$ zero matrix)
and the induced TRO homomorphism
$\tilde{\pi}_5 \colon \TROu(A_4) \to \TROu(A_5)$.
For $1 \leq i \neq j \leq 5$, write $u_{i,j} = E_{i,j} - E_{j,i} \in
A_5$ and $W_{i,j} = \utro{A_5} (u_{i,j}) \in \TROu(A_5)$. Note
$\tilde{\pi}_5(w_{i,j}) = \tilde{\pi}_5( \utro{A_4}(v_{i,j}) ) =
\pi_5( v_{i,j})  = \utro{A_5}(u_{i,j}) = W_{i,j} $ (for $1 \leq i < j \leq
4$).
Since the TRO
homomorphism $\tilde{\pi}_5$ induces $*$-homomorphisms on the left and
right $C^*$-algebras,
\[
p
= P( W_{3,4} W_{3,4}^* + W_{2,3} W_{2,3}^* + W_{2,4} W_{2,4}^*)
\]
is a projection in the left $C^*$-algebra of $\TROu(A_5)$ and
\[
q
= P( W_{3,4}^* W_{3,4} + W_{2,3}^* W_{2,3} + W_{2,4}^* W_{2,4})
\]
is a projection in the right $C^*$-algebra of $\TROu(A_5)$. As
$u_{1,5}$ is orthogonal in $A_5$ to $u_{3,4}$, $u_{2,3}$ and
$u_{2,4}$, we have that $W_{1,5}$ is orthogonal in $\TROu(A_5)$ to $W_{3,4}$,
$W_{2,3}$ and $W_{2,4}$, hence that $p W_{1,5} = 0$. Similarly
$W_{1,5} q = 0$.
Let $H_{i,j} = \tilde{\pi}_5(h_{i,j})$, $G_{i,j} =
\tilde{\pi}_5(g_{i,j})$ ($1 \leq i, j \leq 4$).
As $G_{i,j} G_{i,j}^* W_{1,5} =
G_{i,j} G_{i,j}^* p W_{1,5} =0$
and
$W_{1,5}  G_{i,j}^* G_{i,j} = W_{1,5} q G_{i,j}^* G_{i,j}  = 0$,
we have that $W_{1,5}$ is
orthogonal to all $G_{i,j}$ ($1 \leq i, j \leq 4$).
As
$ 2 \tp { u_{1,5}} { u_{1,5}} {u_{1,2}} = u_{1,2}$,
we can then apply $\utro {A_5}$ to
conclude that
$2 \tp { W_{1,5}} { W_{1,5}} {H_{1,2} - H_{2,1}} = H_{1,2} -
H_{2,1} + G_{1,2} - G_{2,1}$, then multiply on the left
by $(G_{1,1}+ G_{2,2})(G_{1,1}+ G_{2,2})^*$ and on
the right
by $(G_{1,1}+ G_{2,2})^*(G_{1,1}+ G_{2,2})$
to get
$0= G_{1,2} - G_{2,1}$,
from which it follows that $G_{i, j} = 0$ for all $1 \leq i,
j \leq 4$.

Thus the subTRO of $\TROu(A_5)$ generated by $\{ W_{i,j} : 1 \leq i, j
\leq 4\}$ is a copy of $M_4(\IC)$ and we can recover the elements
$H_{i,j}$ (which satisfy the same TRO rules as $E_{i, j}$  ($1 \leq i,
j \leq 4$)) via $H_{i,j} = \trop{W_{i,k}}{W_{i,k}}{W_{i, j}}$,
$H_{i, i}  = \trop{H_{i,j}}{H_{i,j}}{H_{i, k}}$
(for distinct $i, j, k$ ($1 \leq i, j, k \leq 4$)).
Similar matrix units $H^r_{i, j}$
arise from all four element subsets $K_r = \{1, 2, 3,
4, 5\} \setminus \{r\}$ (now with $(i, j) \in K_r \times K_r$)
and it is quite straightforward to check that $H^r_{i, j}$ is
independent of the choice of $r \in \{1, 2, 3,
4, 5\} \setminus \{i, j\}$, and then that the TRO generated by
$\utro{A_5}(A_5)$ must be a copy of $M_5(\IC)$.
Finally note that that $A_5$ is reversible in $M_5(\IC)$ to complete
the proof.
\qedhere
\end{enumerate}
\end{proof}

\subsection*{Reversibility}

Collecting information from Theorems
\ref{TROuHTheorem}, \ref{TROuRmn} and \ref{HermAndSympThm},
and Lemma~\ref{SpinsRevLemma},
we have the following classification of the Cartan
factors which are universally reverible \JCstar-triples.

\begin{theorem}
\label{CharactRevCFs}
Spin factors of dimension greater than 4 and Hilbert spaces of
dimension greater than 2 are not universally reversible. All other
Cartan factors are universally reversible.
\end{theorem}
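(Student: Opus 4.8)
The plan is to assemble Theorems~\ref{TROuHTheorem}, \ref{TROuRmn} and \ref{HermAndSympThm} together with Lemma~\ref{SpinsRevLemma} into a single case analysis running over the list of Cartan factor types, paying particular attention to the three coincidental identifications ($R_{2,2}$, $S_2$ and $A_4$ as spin factors). Since every Cartan factor is either a Hilbert space (rank $1$) or else a rectangular, hermitian, symplectic or spin factor, it is enough to settle universal reversibility on each type and then reconcile the overlaps.

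First I would dispose of the two families that the statement claims to fail. By Theorem~\ref{TROuHTheorem} a Hilbert space $H$ is universally reversible exactly when $\dim(H) \leq 2$, so Hilbert spaces of dimension greater than $2$ are excluded; by Lemma~\ref{SpinsRevLemma} a spin factor $V$ is universally reversible exactly when $\dim(V) \leq 4$, so spin factors of dimension greater than $4$ are excluded. The same two results supply the converse half for these types: Hilbert spaces of dimension at most $2$ and spin factors of dimension at most $4$ are universally reversible.

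Next I would treat the remaining types directly. Rectangular factors $R_{m,n}$ of rank $m \geq 2$ are universally reversible by Theorem~\ref{TROuRmn} (whose proof covers both finite and infinite rank), while all hermitian factors $S_n$ with $2 \leq n \leq \infty$ are covered by Theorem~\ref{HermAndSympThm}(a). For the symplectic factors $A_n$, Theorem~\ref{HermAndSympThm}(b) yields universal reversibility for every $n \geq 5$; the one symplectic factor left over is $A_4$, which I would instead classify through its spin structure.

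The step I expect to be the only genuine point of care---and the natural hiding place for a gap---is the reconciliation of the three coincidental spin factors, so I would verify these explicitly. As spin factors, $R_{2,2}$, $S_2$ and $A_4$ have dimensions $4$, $3$ and $6$ respectively. The first two have spin dimension at most $4$, so Lemma~\ref{SpinsRevLemma} makes them universally reversible, in agreement with their readings as a rank-$2$ rectangular factor (Theorem~\ref{TROuRmn}) and a hermitian factor (Theorem~\ref{HermAndSympThm}(a)). Decisively, $A_4$ has spin dimension $6 > 4$, so Lemma~\ref{SpinsRevLemma} places it among the \emph{non}-universally-reversible factors; this causes no conflict precisely because Theorem~\ref{HermAndSympThm}(b) was stated only for $n \geq 5$ and makes no assertion about $A_4$. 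Once these overlaps are seen to be consistent, every Cartan factor has been sorted exactly as the statement requires, which completes the argument.
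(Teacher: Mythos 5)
Your proposal is correct and takes essentially the same approach as the paper, whose entire proof is precisely the assembly of Theorems~\ref{TROuHTheorem}, \ref{TROuRmn} and \ref{HermAndSympThm} together with Lemma~\ref{SpinsRevLemma}. Your explicit reconciliation of the overlapping cases --- in particular that $A_4$, being a spin factor of dimension $6>4$, is not universally reversible and is deliberately omitted from Theorem~\ref{HermAndSympThm}(b) --- is exactly the (unstated) bookkeeping on which the paper's one-line proof relies.
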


\section{Operator space structures}
\label{secOpSpStructs}

We consider operator space structures of \JCstar-triples arising from
concrete triple embeddings in $C^*$-algebras, making essential use of
universal TROs and establishing links with injective envelopes and
triple envelopes, concentrating upon Cartan factors in the latter part
of the section. Frequent use is made of the coincidence of complete
isometries between TROs and TRO isomorphisms.

\begin{definition}
\label{DefOfJCStarOpSp}
A \emph{\JC -operator space structure} on a \JCstar-triple $E$ is
an operator space structure determined by a linear isometry from $E$
onto a \JCstar-subtriple of $\BH$.
By a \emph{\JC-operator space} is meant a \JCstar -triple $E$
together with a
prescribed \JC -operator space structure on $E$.
\end{definition}

Let $E$ be a \JCstar-triple. Two \JC -operator space structures
$E_1$ and $E_2$ are deemed equal, written $E_1 = E_2$, if and only if
the identity map $E_1 \to E_2$ is a complete isometry.
We shall make extensive use of the (norm closed) ideals $\mathcal{I}$
of $\TROu(E)$ for which $\utro E(E) \cap \mathcal{I} = \{0\}$ which,
henceforth, we shall refer to as the \emph{operator space ideals} of
$\TROu(E)$.
For each operator space ideal $\mathcal{I}$
of $\TROu(E)$ we have the \JC-operator space
structure,
$E_{\mathcal{I}}$,
on $E$ determined by the isometric embedding $E \to
\TROu(E)/\mathcal{I}$ ($x \mapsto \utro E (x) + \mathcal{I}$), the
\JCstar-triple image of which we denote $\tilde{E}_{\mathcal{I}}$. We
note that $\TRO(\tilde{E}_{\mathcal{I}}) = \TROu(E)/\mathcal{I}$.
Given operator space ideals $\mathcal{I}$ and $\mathcal{J}$ of
$\TROu(E)$, these notations imply that $E_{\mathcal{I}} =
E_{\mathcal{J}}$ if and only if $\tilde{E}_{\mathcal{I}}  \to
\tilde{E}_{\mathcal{J}}$ ($\utro E (x) + \mathcal{I} \mapsto \utro E
(x) + \mathcal{J}$) is a complete isometry.

By the functorial properties of the universal TRO, if $\pi \colon E
\to F$ is a linear isometry onto a \JCstar-triple $F$, then $\TROu(\pi)
\colon \TROu(E) \to \TROu(F)$ preserves operator space ideals.
By Proposition~\ref{MaximalOpSpIdeals} we note that every operator
space ideal in $\TROu(E)$ is contained in a  maximal operator space
ideal.

\begin{proposition}
\label{PropIdealsGiveOpSpaceStructs}
Let $E$ be a \JCstar -triple and let
$\pi \colon E \to F$
be a linear isometry onto a \JCstar -subtriple $F$ of $\BH$ (regarded
as an operator subspace of $\BH$). Then there is an operator space
ideal
$\mathcal{I} $ of $ \TROu(E)$
such that $\pi \colon E_{\mathcal{I}} \to F$ is a
complete isometry. Hence every \JC-operator space structure on $E$
equals ${E}_{\mathcal{I}}$ for some operator space ideal
$\mathcal{I}$.
\end{proposition}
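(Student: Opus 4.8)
The plan is to realise $\pi$ as a triple homomorphism into the TRO generated by $F$, invoke the universal property of $\TROu(E)$, and take $\mathcal{I}$ to be the kernel of the induced TRO homomorphism. First I would note that, since $\pi\colon E\to F$ is a surjective linear isometry onto a \JCstar-subtriple of $\BH$, Lemma~\ref{TROmorphsAreCC}(a) makes it a triple isomorphism, and in particular a triple homomorphism. Composing with the inclusion $F\subseteq \TRO(F)$ yields a triple homomorphism $\pi\colon E\to \TRO(F)$ into a TRO, so by the universal property Corollary~\ref{TROuXexists}(\ref{TROuUniversalProp}) there is a unique TRO homomorphism $\tilde\pi\colon \TROu(E)\to \TRO(F)$ with $\tilde\pi\circ\utro E=\pi$. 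I would then set $\mathcal{I}=\ker\tilde\pi$, a norm closed ideal of $\TROu(E)$. To check that $\mathcal{I}$ is an operator space ideal, suppose $\utro E(x)\in\mathcal{I}$; then $\pi(x)=\tilde\pi(\utro E(x))=0$, and injectivity of $\pi$ forces $x=0$, so that $\utro E(E)\cap\mathcal{I}=\{0\}$.

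Next I would identify the quotient. Because $\utro E(E)$ generates $\TROu(E)$ as a TRO and $\tilde\pi(\utro E(E))=\pi(E)=F$ generates $\TRO(F)$, the image of $\tilde\pi$ is the (norm closed) sub-TRO generated by $F$, namely all of $\TRO(F)$; hence $\tilde\pi$ is onto and induces a bijective TRO homomorphism $\overline{\tilde\pi}\colon \TROu(E)/\mathcal{I}\to \TRO(F)$. By Lemma~\ref{TROmorphsAreCC}(b) this injective surjective TRO homomorphism is a complete isometry. By the very definition of the \JC-operator space structure, the embedding $E_{\mathcal{I}}\to \TROu(E)/\mathcal{I}$ given by $x\mapsto\utro E(x)+\mathcal{I}$ is a complete isometry onto $\tilde{E}_{\mathcal{I}}$. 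Composing these two complete isometries and tracing the image along $x\mapsto \overline{\tilde\pi}(\utro E(x)+\mathcal{I})=\pi(x)$, I obtain a complete isometry of $E_{\mathcal{I}}$ onto $F$; since $F$ carries the operator space structure inherited from $\BH$, which coincides with that inherited from $\TRO(F)\subseteq\BH$, this composite is precisely $\pi\colon E_{\mathcal{I}}\to F$, establishing the first assertion.

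The final assertion would then be immediate: by Definition~\ref{DefOfJCStarOpSp} an arbitrary \JC-operator space structure on $E$ is the structure determined by some linear isometry $\pi\colon E\to F$ onto a \JCstar-subtriple of some $\BH$, and the construction above produces an operator space ideal $\mathcal{I}$ with $\pi\colon E_{\mathcal{I}}\to F$ a complete isometry, so that the given structure equals $E_{\mathcal{I}}$. I expect no serious obstacle here: the substance is carried entirely by the universal property of $\TROu(E)$ together with the coincidence of TRO isomorphisms with surjective complete isometries. The only point demanding care is the bookkeeping of operator space structures, namely verifying that the two complete isometries compose to $\pi$ itself and that routing $F$ through $\TRO(F)$ leaves its inherited matrix norms unchanged, rather than any genuine difficulty.
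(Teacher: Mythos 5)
Your proposal is correct and follows essentially the same route as the paper: take $\tilde{\pi}\colon \TROu(E)\to\TRO(F)$ given by the universal property, set $\mathcal{I}=\ker\tilde{\pi}$, and observe that the induced TRO isomorphism $\TROu(E)/\mathcal{I}\to\TRO(F)$ restricts to a complete isometry $\tilde{E}_{\mathcal{I}}\to F$ carrying $\utro E(x)+\mathcal{I}$ to $\pi(x)$. Your write-up merely makes explicit the details the paper leaves implicit (that $\pi$ is a triple isomorphism via Lemma~\ref{TROmorphsAreCC}(a), the surjectivity of $\tilde{\pi}$, and the bookkeeping of operator space structures), all of which are verified correctly.
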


\begin{proof}
Letting $\mathcal{I} = \ker \tilde{\pi}$ where $\tilde{\pi} \colon
\TROu(E) \to \TRO(F)$ is the (surjective) TRO homomorphism with
$\tilde{\pi} \circ \utro E = \pi$, we have that $\mathcal{I}$ is an
operator space ideal of $\TROu(E)$ and the induced TRO isomorphism
$\TROu(E)/\mathcal{I} \to \TRO(F)$ restricts (via
Lemma~\ref{TROmorphsAreCC}) to the complete isometry
$\tilde{E}_{\mathcal{I}} \to F$ ($\utro E (x) + \mathcal{I} \mapsto
\pi(x)$).
\end{proof}

\begin{proposition}
\label{CompIsomEIs}
Let $\pi \colon E \to F$ be a linear isometry between \JCstar-triples.
Let $\mathcal{I}$ and $\mathcal{J}$ be operator space ideals of
$\TROu(E)$ and $\TROu(F)$, respectively, and let $\mathcal{K} =
\TROu(\pi)(\mathcal{I})$. Then $\pi \colon E_{\mathcal{I}} \to
F_{\mathcal{J}}$ is a complete isometry if and only if
$F_{\mathcal{J}} = F_{\mathcal{K}}$.
\end{proposition}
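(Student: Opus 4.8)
The plan is to reduce the whole statement to the single observation that $\pi$ always implements a complete isometry from $E_{\mathcal{I}}$ onto $F_{\mathcal{K}}$; once that is in hand, both implications follow formally by composing complete isometries. First I would record the naturality underlying the definition of $\mathcal{K}$. Since $\pi$ is a linear isometry of $E$ onto $F$, it is a triple isomorphism by Lemma~\ref{TROmorphsAreCC}(a), so the universal property of $\TROu(\cdot)$ produces a TRO \emph{isomorphism} $\TROu(\pi)\colon\TROu(E)\to\TROu(F)$ with $\TROu(\pi)\circ\utro E=\utro F\circ\pi$ and inverse $\TROu(\pi^{-1})$. By construction $\TROu(\pi)$ carries $\mathcal{I}$ onto $\mathcal{K}=\TROu(\pi)(\mathcal{I})$, which is an operator space ideal of $\TROu(F)$ by the functorial remark recorded earlier in this section.

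Next I would pass to quotients. As $\TROu(\pi)$ maps $\mathcal{I}$ onto $\mathcal{K}$, it descends to a TRO isomorphism $\TROu(E)/\mathcal{I}\to\TROu(F)/\mathcal{K}$, which is automatically a complete isometry by Lemma~\ref{TROmorphsAreCC}(b). Tracing the identity $\TROu(\pi)\circ\utro E=\utro F\circ\pi$ through the quotient shows that this complete isometry sends $\utro E(x)+\mathcal{I}$ to $\utro F(\pi(x))+\mathcal{K}$, so its restriction to $\tilde{E}_{\mathcal{I}}$ is precisely the realisation of $\pi\colon E_{\mathcal{I}}\to F_{\mathcal{K}}$. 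Hence $\pi\colon E_{\mathcal{I}}\to F_{\mathcal{K}}$ is a complete isometry, and therefore so is its inverse $\pi^{-1}\colon F_{\mathcal{K}}\to E_{\mathcal{I}}$.

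With this in place the two directions are immediate. Recall that $F_{\mathcal{J}}=F_{\mathcal{K}}$ means exactly that the identity map of $F$, viewed as $\id_F\colon F_{\mathcal{K}}\to F_{\mathcal{J}}$ (that is, $\utro F(y)+\mathcal{K}\mapsto\utro F(y)+\mathcal{J}$), is a complete isometry; this holds in either direction since complete isometries are invertible. If $F_{\mathcal{J}}=F_{\mathcal{K}}$, then $\pi\colon E_{\mathcal{I}}\to F_{\mathcal{J}}$ factors as $E_{\mathcal{I}}\xrightarrow{\pi}F_{\mathcal{K}}\xrightarrow{\id_F}F_{\mathcal{J}}$, a composite of complete isometries, hence is a complete isometry. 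Conversely, if $\pi\colon E_{\mathcal{I}}\to F_{\mathcal{J}}$ is a complete isometry, then $\id_F=\pi\circ\pi^{-1}\colon F_{\mathcal{K}}\xrightarrow{\pi^{-1}}E_{\mathcal{I}}\xrightarrow{\pi}F_{\mathcal{J}}$ is a composite of complete isometries, whence $F_{\mathcal{J}}=F_{\mathcal{K}}$. The only genuine content is the naturality bookkeeping of the second paragraph, namely identifying the quotient TRO isomorphism induced by $\TROu(\pi)$ with the map realising $\pi\colon E_{\mathcal{I}}\to F_{\mathcal{K}}$; everything after that is formal manipulation of complete isometries, so I anticipate no substantial obstacle.
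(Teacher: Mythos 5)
Your proposal is correct and follows essentially the same route as the paper: the paper likewise notes that $\TROu(\pi)$ induces a TRO isomorphism $\TROu(E)/\mathcal{I} \to \TROu(F)/\mathcal{K}$ restricting to the complete isometry $\tilde{E}_{\mathcal{I}} \to \tilde{F}_{\mathcal{K}}$, so that $\pi \colon E_{\mathcal{I}} \to F_{\mathcal{K}}$ is a complete isometry, ``whence the result.'' Your second and third paragraphs merely make explicit the naturality bookkeeping and the formal composition argument that the paper leaves implicit.
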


\begin{proof}
The natural TRO homomorphism $\TROu(E)/\mathcal{I} \to
\TROu(F)/\mathcal{K}$ induced by $\TROu(\pi)$ restricts to the
complete isometry $\tilde{E}_{\mathcal{I}} \to
\tilde{F}_{\mathcal{K}}$ ($\utro E (x) + \mathcal {I} \mapsto \utro F
(\pi(x)) + \mathcal{K}$, implying that $\pi \colon  E_{\mathcal{I}}
\to F_{\mathcal{K}}$ is a complete isometry, whence the result.
\end{proof}

\begin{proposition}
\label{compareEIs}
Let $E$ be a \JCstar-triple and let
$\mathcal{I}$ and $\mathcal{J}$ be operator space ideals of
$\TROu(E)$  with $\mathcal{I} \subseteq \mathcal{J}$.
Then the identity map $\pi \colon E_{\mathcal{I}} \to
E_{\mathcal{J}}$ is a complete contraction.
\end{proposition}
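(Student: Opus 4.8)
The plan is to realise the identity map $\pi$ as the restriction of a single quotient TRO homomorphism between the two TRO quotients, and then to invoke the complete contractivity of TRO homomorphisms from Lemma~\ref{TROmorphsAreCC}(b).

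First I would produce the linking map. Since $\mathcal{I} \subseteq \mathcal{J}$ are both (norm closed) ideals of $\TROu(E)$, the quotient map $\TROu(E) \to \TROu(E)/\mathcal{J}$ factors through $\TROu(E)/\mathcal{I}$ (the coset $z+\mathcal{I}$ determines $z+\mathcal{J}$ because $\mathcal{I}\subseteq\mathcal{J}$), yielding a well-defined surjective TRO homomorphism
\[
q \colon \TROu(E)/\mathcal{I} \to \TROu(E)/\mathcal{J}, \qquad
z + \mathcal{I} \mapsto z + \mathcal{J}.
\]
By Lemma~\ref{TROmorphsAreCC}(b), $q$ is completely contractive.

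Next I would trace through the definitions of the two \JCstar-operator space structures. By construction, $E_{\mathcal{I}}$ is the operator space structure pulled back along the isometry $E \to \TROu(E)/\mathcal{I}$, $x \mapsto \utro E(x) + \mathcal{I}$, with image $\tilde{E}_{\mathcal{I}}$, and likewise $E_{\mathcal{J}}$ is pulled back along $x \mapsto \utro E(x) + \mathcal{J}$ with image $\tilde{E}_{\mathcal{J}}$. Applying $q$ to the element $\utro E(x) + \mathcal{I}$ of $\tilde{E}_{\mathcal{I}}$ returns $\utro E(x) + \mathcal{J}$, which is precisely the image of $x$ under the defining isometry for $E_{\mathcal{J}}$. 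Hence, under the isometric identifications $E_{\mathcal{I}} \cong \tilde{E}_{\mathcal{I}}$ and $E_{\mathcal{J}} \cong \tilde{E}_{\mathcal{J}}$, the identity map $\pi$ corresponds exactly to the restriction $q|_{\tilde{E}_{\mathcal{I}}} \colon \tilde{E}_{\mathcal{I}} \to \tilde{E}_{\mathcal{J}}$. Since the matrix norms on $\tilde{E}_{\mathcal{I}}$ are those inherited from $\TROu(E)/\mathcal{I}$, the restriction of the complete contraction $q$ to this operator subspace is again a complete contraction, so $\pi$ is a complete contraction, as required.

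The argument is essentially formal and I do not expect a genuine analytic obstacle: the complete contractivity comes for free from Lemma~\ref{TROmorphsAreCC}(b). The only point demanding care is the bookkeeping in the middle step, namely verifying that the purely set-theoretic identity on $E$ is indeed carried by the single TRO map $q$ once both operator space structures are expressed concretely inside the respective TRO quotients; this is where one must be attentive to keep the two isometric identifications straight.
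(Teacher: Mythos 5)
Your proposal is correct and is essentially identical to the paper's own proof: both realise the identity map as the restriction to $\tilde{E}_{\mathcal{I}}$ of the quotient TRO homomorphism $\TROu(E)/\mathcal{I} \to \TROu(E)/\mathcal{J}$ and invoke complete contractivity of TRO homomorphisms (Lemma~\ref{TROmorphsAreCC}(b)). Your version merely spells out the bookkeeping that the paper leaves implicit.
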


\begin{proof}
$\tilde{E}_{\mathcal{I}} \to \tilde{E}_{\mathcal{J}}$ ($\utro E +
\mathcal {I} \mapsto \utro E (x) + \mathcal{J}$) is the restriction of
the quotient TRO homomorphism $\TROu(E)/\mathcal{I} \to
\TROu(E)/\mathcal{J}$ and so is a complete contraction.
\end{proof}

Given a \JCstar -triple $E$ we denote by $\mbox{MAX}_{JC}(E)$ the
\JC-operator space structure determined by the zero ideal in
$\TROu(E)$. By Proposition~\ref{compareEIs} the identity map
$\mbox{MAX}_{JC}(E) \to E$ is a complete contraction for every
\JC-operator space structure on $E$. By Proposition~\ref{CompIsomEIs}
every surjective linear isometry $\pi \colon \mbox{MAX}_{JC}(E) \to
\mbox{MAX}_{JC}(E)$ is a complete isometry.

Recall that the injective envelope $(I(V), j)$ of an operator space $V$
is an injective operator space $I(V)$ together with a completely
isometric injection, $j \colon V \to I(V)$, such that $I(V)$ is
minimal injective containing $j(V)$.
Moreover $(I(V), j)$ is unique up to complete isometry and $I(V)$ may
be realised as a TRO; in which case $\TRO(j(V))$ is said to be the
\emph{triple envelope},  $\mathcal{T}(V)$, associated with $V$ and it
possesses the following universal property \cite{Hamana99} (see also
\cite[Theorem 8.3.9]{BlecherleMerdy})
\begin{quote}
Given an operator space $U \subset \BH$ and a complete isometry $\phi
\colon U \to V$ onto $V$, there is a unique TRO homomorphism
$\hat{\phi} \colon \TRO(U) \to \mathcal{T}(V)$ such that $j \circ \phi
= \hat{\phi} |_U$.
\end{quote}

\begin{theorem}
\label{TripleEnvThm}
Let $(I(E), j)$ be an injective envelope of a \JC-operator space $E$,
where $I(E)$ is a TRO. Then
\begin{enumerate}[(a)]
\item $j \colon E \to I(E)$ is a triple homomorphism;
\item there is a unique TRO homomorphism $\tilde{j} \colon \TROu(E)
\to \mathcal{T}(E)$ such that $\tilde{j} \circ \utro E = j$, and
$\tilde{j}$ is surjective;
\item $\ker \tilde{j}$ is the largest operator space ideal
$\mathcal{I}$ of
$\TROu(E)$ with $E = E_{\mathcal{I}}$.
\end{enumerate}
\end{theorem}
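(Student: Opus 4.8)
The plan is to derive all three parts from the two universal properties already available: that of the triple envelope $\mathcal{T}(E) = \TRO(j(E))$ recalled just before the statement, and that of the universal TRO $\TROu(E)$ from Corollary~\ref{TROuXexists}. For (a), I would fix a concrete realisation of the \JC-operator space $E$ as a \JCstar-subtriple of some $\BH$, so that $\TRO(E) \subseteq \BH$ and $E$ is a \JCstar-subtriple of this TRO. Applying the universal property of the triple envelope with $U = E$, $V = E$ (the given structure) and $\phi = \id_E$ yields a unique TRO homomorphism $\hat{\phi} \colon \TRO(E) \to \mathcal{T}(E)$ with $\hat{\phi}|_E = j$. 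Since the restriction of a TRO homomorphism to a \JCstar-subtriple is a triple homomorphism, $j = \hat{\phi}|_E$ is a triple homomorphism, establishing (a).

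For (b), part (a) makes $j \colon E \to \mathcal{T}(E)$ a triple homomorphism into a TRO, so the universal property of $\TROu(E)$ (Corollary~\ref{TROuXexists}(b)) supplies the unique TRO homomorphism $\tilde{j} \colon \TROu(E) \to \mathcal{T}(E)$ with $\tilde{j} \circ \utro E = j$. Surjectivity is then immediate: the image of $\tilde{j}$ is a sub-TRO containing $\tilde{j}(\utro E(E)) = j(E)$, and $\mathcal{T}(E) = \TRO(j(E))$ is generated as a TRO by $j(E)$.

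For (c), set $\mathcal{I}_0 = \ker \tilde{j}$. First I would check that $\mathcal{I}_0$ is an operator space ideal: since $\tilde{j} \circ \utro E = j$ is injective (being a complete isometry), $\utro E(E) \cap \mathcal{I}_0 = \{0\}$. Next, the map induced by $\tilde{j}$ on the quotient is a TRO isomorphism $\TROu(E)/\mathcal{I}_0 \to \mathcal{T}(E)$ (hence a complete isometry by Lemma~\ref{TROmorphsAreCC}(b)) carrying $\utro E(x) + \mathcal{I}_0$ to $j(x)$; since $j$ is completely isometric onto $j(E) \subseteq \mathcal{T}(E) \subseteq I(E)$, this shows that $E_{\mathcal{I}_0}$ coincides with the given structure, i.e.\ $E = E_{\mathcal{I}_0}$.

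The substantive step, and the one I expect to be the crux, is maximality. Given any operator space ideal $\mathcal{I}$ with $E = E_{\mathcal{I}}$, I would show $\mathcal{I} \subseteq \mathcal{I}_0$. The equality $E = E_{\mathcal{I}}$ means that $x \mapsto \utro E(x) + \mathcal{I}$ is a complete isometry of $E$ onto $\tilde{E}_{\mathcal{I}} \subseteq \TROu(E)/\mathcal{I} = \TRO(\tilde{E}_{\mathcal{I}})$; let $\phi$ be its inverse, a complete isometry of $\tilde{E}_{\mathcal{I}}$ onto $E$. Applying the universal property of the triple envelope with $U = \tilde{E}_{\mathcal{I}}$, $V = E$ and this $\phi$ produces a TRO homomorphism $\hat{\phi} \colon \TROu(E)/\mathcal{I} \to \mathcal{T}(E)$ with $\hat{\phi}|_{U} = j \circ \phi$. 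Composing with the quotient map $q_{\mathcal{I}} \colon \TROu(E) \to \TROu(E)/\mathcal{I}$ gives a TRO homomorphism $\hat{\phi} \circ q_{\mathcal{I}}$ satisfying $(\hat{\phi} \circ q_{\mathcal{I}}) \circ \utro E = j$, so by the uniqueness asserted in (b) it must equal $\tilde{j}$. Hence $\mathcal{I} = \ker q_{\mathcal{I}} \subseteq \ker(\hat{\phi} \circ q_{\mathcal{I}}) = \ker \tilde{j} = \mathcal{I}_0$, which completes the argument. The delicate point is deploying the triple envelope's universal property in this ``reverse'' direction and then pinning down $\hat{\phi} \circ q_{\mathcal{I}}$ through the uniqueness furnished by $\TROu(E)$.
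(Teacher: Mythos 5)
Your proposal is correct, and parts (b) and (c) follow essentially the same route as the paper: (b) is the universal property of $\TROu(E)$ applied to $j$, and for (c) you, like the paper, show $E = E_{\ker\tilde{j}}$ via the induced isomorphism $\TROu(E)/\ker\tilde{j} \to \mathcal{T}(E)$, and then obtain maximality by feeding the complete isometry $\tilde{E}_{\mathcal{J}} \to E$ into Hamana's universal property of $\mathcal{T}(E)$ and invoking the uniqueness from (b) to identify the resulting composite with $\tilde{j}$. Where you genuinely diverge is part (a). The paper realises an injective envelope $W$ of $E$ inside $\BH$ as the range of a completely contractive projection $P$ and invokes Youngson's theorem to get a complete isometry $\phi \colon W \to I(E)$ satisfying $\phi(P(\trop a b c)) = \trop{\phi(a)}{\phi(b)}{\phi(c)}$, from which $j(\tp x x x) = \trop{j(x)}{j(x)}{j(x)}$ follows (cubes sufficing by polarisation). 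You instead apply Hamana's universal property to the identity map on a concrete realisation of $E$, exhibiting $j$ as the restriction to $E$ of a TRO homomorphism $\TRO(E) \to \mathcal{T}(E)$, whence it preserves $\tp{\cdot}{\cdot}{\cdot}$. Your route is shorter and makes the entire theorem rest on the single quoted external input (the universal property of the triple envelope), at the cost of using that stronger black box even for (a); the paper's route via Youngson is more self-contained at that point, displaying directly how the ternary structure on $I(E)$ interacts with $j$ without yet appealing to $\mathcal{T}(E)$. One small point worth making explicit in your version of (a): the prescribed \JC-operator space structure is what guarantees that the identification of $E$ with its concrete realisation is a complete isometry, which is exactly the hypothesis Hamana's property requires of $\phi$.
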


\begin{proof}
\begin{enumerate}[(a)]
\item
We may suppose $E$ is a \JCstar-subtriple (and operator subspace) of
$\BH$ and may choose an injective envelope $W$ of $E$ in $\BH$
together with a completely contractive projection $P$ on $\BH$ with
image $W$. There is a complete isometry $\phi \colon W \to I(E)$ such
that $\phi |_E = j$ and $\phi( P(\trop a b c) ) = \trop { \phi(a)} {
\phi(b)} {\phi(c)}$ for all $a, b , c \in W$ \cite{Youngson83}. In
particular, for each $x$ in $E$ we have $\phi(\tp x x x) =
\phi(P (\trop x x x) ) = \trop {\phi(x)} {\phi(x)} {\phi(x)}$, proving
(a).

\item
This is immediate from (a) and the universal property of $\TROu(E)$.

\item
Letting $\mathcal{I} = \ker \tilde{j}$, the map
$\tilde{E}_{\mathcal{I}} \to
j(E)$ ($\utro E (x) + \mathcal{I} \mapsto j(x)$) is a complete
isometry as in the proof of
Proposition~\ref{PropIdealsGiveOpSpaceStructs} so that composing with
$j^{-1} \colon j(E) \to E$ we have that the identity map
$E_{\mathcal{I}} \to E$ is a complete isometry.

Let $\mathcal{J}$ be any operator space ideal of $\TROu(E)$ such that
$E_{\mathcal{J}} = E$, and let $q \colon \TROu(E) \to
\TROu(E)/\mathcal{J}$ be the quotient map. Since $\pi \colon
\tilde{E}_{\mathcal{J}} \to E$ ($\utro E (x) + \mathcal{J} \mapsto x$)
is a complete isometry, the above-mentioned universal property of
$\mathcal{T}(E)$ implies that there is a TRO homomorphism $\hat{\pi}
\colon \TRO( \tilde{E}_{\mathcal{J}} ) ( = \TROu(E)/\mathcal{J}) \to
\mathcal{T}(E)$ such that $\pi \circ q \circ \utro E = j$. Hence
$\hat{\pi} \circ q = \tilde{j}$ by (b), proving that $\mathcal{J}
\subseteq \ker \tilde{j}$.
\qedhere
\end{enumerate}
\end{proof}

We remark that in the notation of Theorem~\ref{TripleEnvThm}, $\ker
(\tilde{j})/\mathcal{J}$ corresponds to the \v{S}ilow boundary
\cite{Hamana99} of $E_{\mathcal{J}}$.

A natural question arises from our prior discussion. Do there exist
\JCstar-triples $E$ for which there are operator space ideals
$\mathcal{I} \subsetneq \mathcal{J} \subset \TROu(E)$  with
$E_{\mathcal{I}} = E_{\mathcal{J}}$? (Equivalently, do there exist
\JC-operator spaces with non-zero \v{S}ilow boundary?)

As an application we shall show that there are no such examples among
non-Hilbertian Cartan factors and shall completely describe the 
\JC-operator spaces in these cases. For the finite rank non-Hilbertian
Cartan
factors our results may be seen as complements of those of
\cite{NealRussoTAMS03} and
\cite[\S7]{LeMerdyRicardRoydorv2}.

\begin{proposition}
\label{MAXJCisUnique}
Let $E$ be a Cartan factor that is not rectangular of dimension
greater than one nor an even dimensional spin factor. Then 
$\mbox{MAX}_{JC}(E)$ is the unique \JC-operator space structure on
$E$.
\end{proposition}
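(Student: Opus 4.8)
The plan is to reduce everything to a statement about ideals: I claim the hypotheses guarantee that $\{0\}$ is the only operator space ideal of $\TROu(E)$. Granting this, the conclusion is immediate from Proposition~\ref{PropIdealsGiveOpSpaceStructs}, which says that every \JC-operator space structure on $E$ equals $E_{\mathcal{I}}$ for some operator space ideal $\mathcal{I}$; if $\{0\}$ is the only such ideal, then every structure equals $E_{\{0\}} = \mbox{MAX}_{JC}(E)$. Since an operator space ideal is by definition an ideal $\mathcal{I}$ of $\TROu(E)$ with $\utro E(E) \cap \mathcal{I} = \{0\}$, the entire task becomes: show that every nonzero norm closed ideal of $\TROu(E)$ meets $\utro E(E)$ nontrivially.

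First I would identify $\TROu(E)$ using the computations already made in this section. The factors allowed by the hypothesis are exactly $\IC$, the hermitian factors $S_n$ ($2 \le n \le \infty$), the symplectic factors $A_n$ ($5 \le n \le \infty$; note $A_4$ is an even dimensional spin factor and is excluded), and the spin factors of odd or infinite dimension. By Theorem~\ref{TROuHTheorem}, Theorem~\ref{HermAndSympThm}, Lemma~\ref{SpinsRevLemma} and the discussion of standard spin representations, these fall into two classes. In the first, $\TROu(E)$ is simple: this covers $\IC$, the finite dimensional hermitian and symplectic factors (full matrix algebras $M_n(\IC)$), the odd dimensional spin factors (where $\TROu(E) = M_{2^n}(\IC)$), and the infinite dimensional spin factors (where $\TROu(E)$ is the CAR algebra). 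In the second, $E$ is an infinite dimensional hermitian or symplectic factor and $\TROu(E) = \BH$ with $\dim H$ infinite.

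The simple case is trivial: the only nonzero closed ideal of $\TROu(E)$ is $\TROu(E)$ itself, which meets the nonzero subspace $\utro E(E)$. For the case $\TROu(E) = \BH$ I would use that the compacts $\KH$ constitute the minimal nonzero closed ideal of $\BH$, so that every nonzero closed ideal contains $\KH$; it then suffices to exhibit a nonzero finite rank, hence compact, element of $\utro E(E) = E$, for example $E_{i,j} + E_{j,i}$ in the hermitian case and $E_{i,j} - E_{j,i}$ in the symplectic case, taken with respect to a basis adapted to the conjugation. This gives $\KH \cap E \neq \{0\}$, so every nonzero closed ideal meets $E$.

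Nearly all the substance is carried by the structure theorems already in place, so the principal point here is organisational: confirming that the stated hypothesis isolates precisely the factors whose universal TRO admits no nonzero operator space ideal. In particular it excludes the rectangular factors of rank at least two and the even dimensional spin factors, for which $\TROu(E)$ is a nontrivial direct sum ($E \oplus E^t$, respectively $M_{2^n}(\IC) \oplus M_{2^n}(\IC)$) whose summands are operator space ideals producing genuinely different structures. The only analytic step is the infinite dimensional case $\TROu(E) = \BH$, where the one thing to verify is that $E$ meets the minimal ideal $\KH$.
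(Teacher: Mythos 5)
Your proposal is correct and follows essentially the same route as the paper: reduce via Proposition~\ref{PropIdealsGiveOpSpaceStructs} to showing $\TROu(E)$ has no non-zero operator space ideals, note that this is automatic when $\TROu(E)$ is simple (the finite dimensional and spin cases from \S\ref{SectionCartanFs}), and in the infinite dimensional hermitian and symplectic cases use that every non-zero ideal of $\TROu(E)=\BH$ contains $\mathcal{K}(H)$, which meets $\utro E(E)$ in non-zero finite rank elements. The paper's proof is just a terser version of exactly this argument, leaving the simplicity bookkeeping implicit.
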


\begin{proof}
By the results of \S\ref{SectionCartanFs}, $\TROu(E)$ has no non-zero
operator space ideals. In the infinite dimensional hermitian and
symplectic cases this is because for every non-zero ideal
$\mathcal{J}$ of $\TROu(E) = \BH$, $\mathcal{K}(H) \subseteq
\mathcal{J}$ and so $\utro E(E) \cap \mathcal{J}$ contains
non-trivial compact operators.
\end{proof}

By \cite[Proposition 7.1]{LeMerdyRicardRoydorv2}
(with different notation) the isometry
$\beta_n$ of Lemma~\ref{OddSpinsLemma}
is not a complete isometry, an alternative
proof of
which is included in the next result which is a complement of
\cite[Proposition 7.3 (1),(2)]{LeMerdyRicardRoydorv2}.

\begin{proposition}
\label{SpinOpSpace}
For $n = 1, 2, \ldots$,
the spin factor $V_{2n+1}$  has precisely three \JC-operator space
structures, two of which are completely isometric.
\end{proposition}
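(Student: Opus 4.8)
The plan is to work throughout with $\tilde{V}_{2n+1}$, which is triple isomorphic to $V_{2n+1}$ (e.g.\ via $\mu_n$) and hence carries the same \JC-operator space structures. By Lemma~\ref{OddSpinsLemma}(c) its universal TRO is $\TROu(\tilde{V}_{2n+1}) = M_{2^n}(\IC)\oplus M_{2^n}(\IC)$, with universal embedding $x \mapsto x\oplus\beta_n(x)$, which I abbreviate $\utr$. Since each summand is simple, the only ideals of $\TROu(\tilde{V}_{2n+1})$ are $\{0\}$, $M_{2^n}(\IC)\oplus\{0\}$, $\{0\}\oplus M_{2^n}(\IC)$ and the whole algebra. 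As $\beta_n$ is injective, $\utr(x)$ lies in a proper summand only when $x=0$, so each of the first three meets $\utr(\tilde{V}_{2n+1})$ in $\{0\}$, while the whole algebra does not. Thus there are exactly three operator space ideals $\mathcal{I}_0=\{0\}$, $\mathcal{I}_1 = M_{2^n}(\IC)\oplus\{0\}$, $\mathcal{I}_2 = \{0\}\oplus M_{2^n}(\IC)$, and by Proposition~\ref{PropIdealsGiveOpSpaceStructs} every \JC-operator space structure equals one of $E_{\mathcal{I}_0}$, $E_{\mathcal{I}_1}$, $E_{\mathcal{I}_2}$; so there are at most three.

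Next I would record the three structures concretely. Quotienting by $\mathcal{I}_2$ (respectively $\mathcal{I}_1$) identifies $\tilde{E}_{\mathcal{I}_2}$ (respectively $\tilde{E}_{\mathcal{I}_1}$) with the subspace $\tilde{V}_{2n+1}\subseteq M_{2^n}(\IC)$ through the first (respectively second) coordinate, so that for $X=[x_{pq}]\in M_k(\tilde{V}_{2n+1})$ one has $\|X\|_{\mathcal{I}_2} = \|[x_{pq}]\|$ and $\|X\|_{\mathcal{I}_1} = \|[\beta_n(x_{pq})]\|$ (norms in $M_k(M_{2^n}(\IC))$), while $E_{\mathcal{I}_0} = \mbox{MAX}_{JC}(\tilde{V}_{2n+1})$ carries $\|X\|_{\mathcal{I}_0} = \max(\|[x_{pq}]\|,\|[\beta_n(x_{pq})]\|)$. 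In particular $\beta_n$ is a surjective linear isometry $E_{\mathcal{I}_1}\to E_{\mathcal{I}_2}$ which is completely isometric, since on the common realisation $\tilde{V}_{2n+1}\subseteq M_{2^n}(\IC)$ it is the identity; this produces the two completely isometric structures of the statement.

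It then remains to show the three structures are pairwise distinct, i.e.\ that no identity map between two of them is completely isometric. From the norm formulas (and Proposition~\ref{compareEIs}) the identity $E_{\mathcal{I}_0}\to E_{\mathcal{I}_2}$, the identity $E_{\mathcal{I}_0}\to E_{\mathcal{I}_1}$, and the identity $E_{\mathcal{I}_1}\to E_{\mathcal{I}_2}$ are completely isometric precisely when $\beta_n$ is, respectively, completely contractive, completely contractive, and completely isometric on the concrete space $\tilde{V}_{2n+1}\subseteq M_{2^n}(\IC)$. Because $\beta_n$ is an isometric involution it is completely contractive if and only if it is completely isometric, so all three distinctness claims reduce to the single assertion that $\beta_n$ is \emph{not} a complete isometry on its concrete realisation.

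The main obstacle is this last assertion, for which I would give the following argument (an alternative to \cite[Proposition 7.1]{LeMerdyRicardRoydorv2}). As $\tilde{V}_{2n+1}$ contains the identity and its spin system $\{s_1,\ldots,s_{2n},t_{2n+1}\}$ generates $M_{2^n}(\IC)$ as a $C^*$-algebra, the TRO it generates is all of $M_{2^n}(\IC)$. If $\beta_n$ were a complete isometry of $\tilde{V}_{2n+1}$ onto itself, it would extend (by the triple-envelope extension property for complete isometries) to a TRO homomorphism on $M_{2^n}(\IC)$; since $M_{2^n}(\IC)$ is simple and the map is non-zero, this extension has trivial kernel and, by Lemma~\ref{TROmorphsAreCC}, is a TRO automorphism $\hat{\beta}_n$ of $M_{2^n}(\IC)$. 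As $\hat{\beta}_n$ fixes the identity we get $\hat{\beta}_n(ab)=\hat{\beta}_n(a\,1^*b)=\hat{\beta}_n(a)\hat{\beta}_n(b)$ and $\hat{\beta}_n(a^*)=\hat{\beta}_n(a)^*$, so $\hat{\beta}_n$ is a unital $*$-automorphism, in particular multiplicative. Now $t_{2n+1}$ anticommutes with every $s_i$ (being part of a spin system), and since $s_1,\ldots,s_{2n}$ generate $M_{2^n}(\IC)$ the product $s_1 s_2\cdots s_{2n}$ is, up to a non-zero scalar, the unique element anticommuting with all of them; hence $t_{2n+1}=c\,s_1 s_2\cdots s_{2n}$ with $c\neq0$. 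Being multiplicative and fixing each $s_i$, $\hat{\beta}_n$ fixes this product, so $\hat{\beta}_n(t_{2n+1})=t_{2n+1}$, contradicting $\beta_n(t_{2n+1})=-t_{2n+1}\neq0$. Therefore $\beta_n$ is not a complete isometry, which establishes the three distinctness claims and completes the proof.
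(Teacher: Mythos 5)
Your proof is correct, and its overall skeleton coincides with the paper's: both enumerate the operator space ideals of $\TROu(\tilde{V}_{2n+1}) = M_{2^n}(\IC)\oplus M_{2^n}(\IC)$ as $\{0\}$, $M_{2^n}(\IC)\oplus\{0\}$ and $\{0\}\oplus M_{2^n}(\IC)$, obtain the same three candidate structures via Proposition~\ref{PropIdealsGiveOpSpaceStructs}, and identify the two proper-summand structures as completely isometric (you check this directly from the matrix-norm formulas; the paper invokes Proposition~\ref{CompIsomEIs} with the exchange automorphism $\TROu(\beta_n)$). Where you genuinely diverge is the key distinctness step. The paper proves that $\beta_n$ is not completely \emph{contractive}: such a $\beta_n$ would extend, by injectivity of $M_{2^n}(\IC)$ \cite[Theorem 4.1.5]{ER}, to a complete contraction of $M_{2^n}(\IC)$ acting identically on $V_{2n}$, which by rigidity of the injective envelope \cite[Theorem 6.2.1]{ER} must be the identity map, a contradiction; this exploits the fact that $M_{2^n}(\IC)$ is the triple (hence injective) envelope of the \emph{even} spin factor $V_{2n}$. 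You prove instead that $\beta_n$ is not completely \emph{isometric}, reducing complete contractivity to this via the involutivity of $\beta_n$ (a clean reduction the paper does not need), and then argue algebraically: a completely isometric $\beta_n$ extends, by Hamana's universal property of the triple envelope, to a TRO homomorphism out of $\TRO(\tilde{V}_{2n+1}) = M_{2^n}(\IC)$; simplicity together with Lemma~\ref{TROmorphsAreCC} makes this a unital TRO isomorphism, hence a $*$-automorphism fixing each $s_i$, hence fixing $s_1 s_2\cdots s_{2n}$, which (being, up to scalar, the unique element anticommuting with all the $s_i$) is a non-zero multiple of $t_{2n+1}$ --- contradicting $\beta_n(t_{2n+1}) = -t_{2n+1}$. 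Your route avoids injective envelopes and rigidity altogether at the price of the spin-system computation; the paper's argument is softer but relies on more envelope theory. One point you should make explicit: Hamana's extension takes values in the triple envelope $\mathcal{T}(\tilde{V}_{2n+1})$, not literally in $M_{2^n}(\IC)$; since $\TRO(\tilde{V}_{2n+1}) = M_{2^n}(\IC)$ is simple, the canonical TRO homomorphism $M_{2^n}(\IC) \to \mathcal{T}(\tilde{V}_{2n+1})$ extending $j$ is an isomorphism, and composing with its inverse produces the automorphism of $M_{2^n}(\IC)$ extending $\beta_n$ that your argument uses --- routine, but it is the step that justifies the phrase ``TRO automorphism of $M_{2^n}(\IC)$''.
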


\begin{proof}
It is convenient to recall the Jordan $*$ isomorphic copy,
$\tilde{V}_{2n+1}$, of $V_{2n+1}$ and to recall the linear isometry
$\beta_n \colon \tilde{V}_{2n+1} \to \tilde{V}_{2n+1}$ of
Lemma~\ref{OddSpinsLemma} which is not the identity map but does act
identically on $V_{2n}$. Since $V_{2n} \subset \tilde{V}_{2n+1}
\subset M_{2^n}(\IC) = \TROu(V_{2n})$, it follows from
Theorem~\ref{TripleEnvThm} that $M_{2^n}(\IC)$ is a triple envelope of
$V_{2n}$. If $\beta_n$ is a complete contraction, it has a completely
contractive extension $\phi \colon M_{2^n}(\IC) \to M_{2^n}(\IC)$
\cite[Theorem 4.1.5]{ER} which, since it acts
identically on $V_{2n}$ must be the identity map \cite[Theorem
6.2.1]{ER}, as therefore must $\beta_n$, a
contradiction. Hence, $\beta_n$ is not completely contractive.

Since the projection maps $M_{2^n}(\IC) \oplus M_{2^n}(\IC) \to
M_{2^n}(\IC)$ ($x \oplus y \mapsto x$, $x \oplus y \mapsto y$) are
complete contractions, it follows that $\mu_n, \pi \colon
\tilde{V}_{2n+1} \to M_{2^n}(\IC) \oplus M_{2^n}(\IC)$ are not
completely contractive, where $\mu_n(x) = x \oplus \beta_n(x)$ (the
canonical embedding) and $\pi(\beta_n(x)) = \mu_n(x)$, for all $x \in
\tilde{V}_{2n+1}$. Therefore the inclusion $\tilde{V}_{2n+1} \subset
M_{2^n}(\IC)$, $\beta_n$ and $\mu_n$ determine three distinct operator
space structures. In the same order, these are the structures
determined by the complete set of operator space ideals
$\{0\} \oplus M_{2^n}(\IC)$, $M_{2^n}(\IC) \oplus \{0\}$ and the zero
ideal.
The first two mentioned \JC-operator space structures are completely
isometric by Proposition~\ref{CompIsomEIs} since $\TROu(\beta_n)$ is
the exchange automorphism of $\TROu(\tilde{V}_{2n+1}) = M_{2^n}(\IC)
\oplus M_{2^n}(\IC)$.
\end{proof}

Projections $e$ and $f$ in $\BH$ are equivalent if and only if $eH$
and $fH$ have the same dimension. Since an ideal $\mathcal{I}$ of
$\BH$ is the norm closed linear span of its projections
and since a projection $e \in \mathcal{I}$ implies $e^t \in
\mathcal{I}$, because $e^t \sim e$, we have $\mathcal{I}^t =
\mathcal{I}$. Given ideals $\mathcal{I}$ and $\mathcal{J}$ of $\BH$ we
have $\mathcal{I} \subseteq \mathcal{J}$ or
$\mathcal{J} \subseteq \mathcal{I}$ with $\mathcal{K}(H)$ being the
minimal non-zero ideal. If $\mathcal{I} \subsetneq \mathcal{J}$ we may
choose equivalent projections $e$, $f$ in $\mathcal{J}$ with $e, f
\notin \mathcal{I}$ and hence a $*$ isomorphic copy, $M$, of
$M_2(\IC)$ with $M \subset \mathcal{J}$ and $M \cap \mathcal{I} =
\{0\}$. (If $u \in \BH$ with $e = u^* u$, $f = u u^*$, then $u \in
\mathcal{J}$ and the linear span of
$\{e, f, u, u^*\}$ is $*$-isometric to
$M_2(\IC)$.) We further note that if $U$ is an operator subspace of
$\BH$ containing a completely isometric copy
of $M_2(\IC)$ and if
(temporarily) $U_t$ and $U_\delta$ denote the operator space
structures induced on $U$ by $U \to \BH$ ($x \mapsto x^t$) and $U \to
\BH \oplus \BH$
($x \mapsto x \oplus x^t$), respectively, then the 
the identity maps $U \to U_t$, $U \to U_\delta$ and $U_t \to U_\delta$
are not complete isometries.

\begin{theorem}
\label{ThmClassifOPSTR}
Let $E$ be a Cartan factor such that $E$ is not a Hilbert space.
Then $\mathcal{I}
\leftrightarrow E_{\mathcal{I}}$ is a bijective correspondence
between
the operator space ideals of $\TROu(E)$ and the \JC-operator space
structures of $E$.
\end{theorem}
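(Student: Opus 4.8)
The map is surjective by Proposition~\ref{PropIdealsGiveOpSpaceStructs}, so the entire content is injectivity: I must show that if $\mathcal{I}$ and $\mathcal{J}$ are operator space ideals of $\TROu(E)$ with $E_{\mathcal{I}} = E_{\mathcal{J}}$, then $\mathcal{I} = \mathcal{J}$. My plan is to run through the classification of non-Hilbertian Cartan factors from \S\ref{SectionCartanFs}, since the shape of $\TROu(E)$, and hence the pool of operator space ideals, is entirely different for the different types.

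For the hermitian factors $S_n$ ($n\ge 2$), the symplectic factors $A_n$ ($n\ge 5$), the odd-dimensional spin factors and the infinite-dimensional spin factors, Proposition~\ref{MAXJCisUnique} already shows that $\TROu(E)$ has no non-zero operator space ideal (it is a full matrix algebra, the CAR algebra, or $\BH$ with $H$ infinite, in which last case every non-zero ideal meets $\utro E(E)$ in non-zero compacts). Here $\{0\}$ is the only operator space ideal and $\MAX_{JC}(E)$ the only structure, so the correspondence is trivially bijective. The even-dimensional spin factors $V_{2n+1}$ are handled by Proposition~\ref{SpinOpSpace}: its three operator space ideals $\{0\}$, $M_{2^n}(\IC)\oplus\{0\}$, $\{0\}\oplus M_{2^n}(\IC)$ are shown there to determine three pairwise distinct structures (two of which happen to be completely isometric, but not via the identity map), so again the correspondence is bijective.

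The substantial case is the rectangular factor $E = \BH e$ of rank $\ge 2$, where Theorem~\ref{TROuRmn} gives $\TROu(E) = E\oplus E^t$, $\utro E(x) = x\oplus x^t$ (with $e^t = e$) and canonical involution the exchange $x\oplus y^t\mapsto y\oplus x^t$. I would first determine the operator space ideals. Every ideal of $E\oplus E^t$ splits as an orthogonal sum $\mathcal{I}_1\oplus\mathcal{I}_2$; by Proposition~\ref{PropTROIdeals} the ideals of $\BH e$ and of $e\BH$ are exactly the $\mathcal{J}e$ and $e\mathcal{J}$ for ideals $\mathcal{J}$ of $\BH$; and the operator space condition $\utro E(E)\cap(\mathcal{I}_1\oplus\mathcal{I}_2)=\{0\}$ unwinds (using $e^t=e$ and $\mathcal{J}^t=\mathcal{J}$) to $\mathcal{I}_1\cap\mathcal{I}_2^t=\{0\}$, i.e. to $\min(\mathcal{J}_1,\mathcal{J}_2)=\{0\}$ in the totally ordered ideal lattice of $\BH$. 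Thus the operator space ideals are precisely the $\mathcal{J}e\oplus\{0\}$ and the $\{0\}\oplus e\mathcal{J}$, and the corresponding matrix norm on $M_N(E)$ is $\max(\|\cdot\|_{s,\mathcal{J}},\|\cdot\|_t)$, respectively $\max(\|\cdot\|_s,\|\cdot\|_{t,\mathcal{J}})$, where $\|\cdot\|_s$ and $\|\cdot\|_t$ are the standard and transposed matrix norms and the subscript $\mathcal{J}$ indicates passage to the quotient by $\mathcal{J}$. When $e$ has finite rank only $\{0\}$, $E\oplus\{0\}$, $\{0\}\oplus E^t$ survive, giving the standard, transposed and $\MAX$ structures, which the observations immediately preceding the statement separate by means of a completely isometric copy of $M_2(\IC)$ inside $E$.

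For $e$ of infinite rank the various ideals must be separated at matrix level. For a completely isometric copy $M\cong M_2(\IC)$ of block matrix units in $E$ and the test matrix $X=[m_{ab}]\in M_2(E)$ one computes $\|X\|_s = 2$ and $\|X\|_t = 1$, the transpose $X^t$ reversing the two values. To separate $\mathcal{J}e\oplus\{0\}$ from $\mathcal{J}'e\oplus\{0\}$ with $\mathcal{J}\subsetneq\mathcal{J}'$, I would (as in those observations) take the blocks of $M$ to have rank in the gap between $\mathcal{J}$ and $\mathcal{J}'$, so that $M\subseteq\mathcal{J}'e$ while $M\cap\mathcal{J}e=\{0\}$; then the two structures assign $X$ the norms $2$ and $1$, and taking $\mathcal{J}=\{0\}$ here also separates any non-zero member of this family from $\MAX_{JC}(E)$. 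The second family is treated identically after replacing $X$ by $X^t$. To separate a non-zero $\mathcal{J}e\oplus\{0\}$ from a non-zero $\{0\}\oplus e\mathcal{J}'$, I would instead take $M$ of finite rank, so that $M\subseteq\KH e$ lies in every non-zero ideal, whereupon $X$ receives the norms $1$ and $2$ in the two structures. The step I expect to be the main obstacle is verifying that such an $M$ descends to a \emph{completely isometric} copy of $M_2(\IC)$ in the quotient $\TROu(E)/\mathcal{J}$ (so that $\|X\|_{s,\mathcal{J}} = \|X\|_s$ on $M$); this is where the rank bookkeeping for infinite cardinals enters, and it is cleanest to arrange the blocks of $M$ to sit inside a genuine $*$-subalgebra copy of $M_4(\IC)$ of $\BH$ whose matrix units all have rank in the gap, so that the quotient restricts to an injective, hence isometric, $*$-homomorphism on it. Granting this, the four norm computations show that distinct operator space ideals yield distinct structures, establishing injectivity and with it the stated bijection.
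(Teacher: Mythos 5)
Your proposal is correct and takes essentially the same approach as the paper: the same case split via Propositions~\ref{MAXJCisUnique} and~\ref{SpinOpSpace}, the same identification of the operator space ideals of $E \oplus E^t$ as one-sided, and the same separation device of copies of $M_2(\IC)$ lying in the gap between two ideals (respectively, of finite rank) tested against the identity, transpose and diagonal structures. The only step you flag as a potential obstacle has a one-line resolution in the paper: the quotient map restricted to such a copy $M$ is an injective TRO homomorphism, hence completely isometric by Lemma~\ref{TROmorphsAreCC}(b), so no auxiliary $M_4(\IC)$ $*$-subalgebra embedding is needed.
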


\begin{proof}
The other cases being settled by Proposition~\ref{MAXJCisUnique} and
\ref{SpinOpSpace}, we may suppose that $E = \BH e$ where $e$ is a
projection in $\BH$ of rank not less than two. By
Theorem~\ref{TROuRmn}, $\TROu(E) = E \oplus E^t$ with $\utro E (x) = x
\oplus x^t$, and we may assume that $e^t = e$. The canonical
involution of $\TROu(E)$ is given by $\Phi(x \oplus y^t) = y \oplus
x^t)$. By Proposition~\ref{PropTROIdeals} the ideals of $\BH e$ are
the $\mathcal{I} e$ where $\mathcal{I}$ ranges over ideals of $\BH$,
forming a chain with $\mathcal{K}(H) e$ being the minimal non-zero
ideal. The operator space ideals of $\TROu(E)$ are $\mathcal{A} \oplus
\{0\}$ and $\{0\} \oplus \mathcal{A}$ where $\mathcal{A}$ is an ideal
of $E$.

Let $\mathcal{I}$ and $\mathcal{J}$ be ideals of $\BH$ such that
$\mathcal{I} e \subsetneq \mathcal{J} e$. The $e\mathcal{I} e
\subsetneq e\mathcal{J} e$ (see Proposition~\ref{PropTROIdeals}).
Passing to $e \BH e$ ($\equiv \mathcal{B}(eH)$) remarks preceding the
statement show that we may choose a $*$-isomorphic copy, $M$, of
$M_2(\IC)$ in $e\mathcal{J} e$ such that $M \cap e \mathcal{I} e =
\{0\}$, and hence that $M \cap \mathcal{I} e =
\{0\}$. The quotient map $\BH e \to \BH e /\mathcal{I} e$ restricts to
a $*$-isomorphism on $M$.

Put $\mathcal{R} = \mathcal{I} e \oplus \{0\}$ and
$\mathcal{S} = \mathcal{J} e \oplus \{0\}$.
We have $\Phi(\mathcal{R}) = \{0\} \oplus e \mathcal{I}$ and
$\Phi(\mathcal{S}) = \{0\} \oplus e \mathcal{J}$.
Suppose $\mathcal{I} e \neq \{0\}$. The maps inducing the arising
\JC-operator space structures are as indicated below:
\begin{eqnarray*}
E_{\mathcal{R}}:  x \mapsto (x + \mathcal{I} e) \oplus x^t &;&
E_{\Phi(\mathcal{R})}:  x \mapsto x \oplus (x^t + e\mathcal{I} )\\
E_{\mathcal{S}}:  x \mapsto (x + \mathcal{J} e) \oplus x^t &;&
E_{\Phi(\mathcal{S})}:  x \mapsto x \oplus (x^t + e\mathcal{J} )
\end{eqnarray*}
On $e \mathcal{K}(H) e$, $E_{\mathcal{R}}$ and $E_{\Phi(\mathcal{R})}$
induce the structures determined by $x \mapsto x^t$ and $x \mapsto x$,
respectively. Thus $E_{\mathcal{R}} \neq E_{\Phi(\mathcal{R})}$ ($e
\mathcal{K}(H) e$ contains a $*$-isomorphic copy of $M_2(\IC)$).
Similarly $E_{\mathcal{R}} \neq E_{\Phi(\mathcal{S})}$,
$E_{\mathcal{S}} \neq E_{\Phi(\mathcal{R})}$  and $E_{\mathcal{S}}
\neq E_{\Phi(\mathcal{S})}$.

On $M$, both $E_{\mathcal{R}}$ and $E_{\Phi(\mathcal{R})}$ induce the
structure determined by $x \mapsto x \oplus x^t$ whilst 
$E_{\mathcal{S}}$ and $E_{\Phi(\mathcal{S})}$, respectively, induce
the structures determined by $x \mapsto x^t$ and $x \mapsto x$. It
follows that $E_{\mathcal{R}}$, $E_{\Phi(\mathcal{R})}$,
$E_{\mathcal{S}}$ and $E_{\Phi(\mathcal{S})}$ are distinct.

If $\mathcal{I} e = \{0\}$, then $E_{\mathcal{R}}$ ($=
E_{\Phi(\mathcal{R})}$) is determined by $x \mapsto x \oplus x^t$
which differs from $E_{\mathcal{S}}$ and $E_{\Phi(\mathcal{S})}$ on
$e \mathcal{K}(H) e$. This completes the proof.
\end{proof}

\begin{remark}
Consider a \JCstar-triple $E$ linearly isometric to $\BH e$, where $e$ is a
non-zero projection in $\BH$ not of rank one. By
Theorem~\ref{ThmClassifOPSTR} and its
proof, since the ideals of $E$ are in bijective correspondence with
those of $e \BH e$ ($\equiv \mathcal{B}(eH)$),
we may deduce the following, which answers
a question raised in
\cite[Remark 7.4]{LeMerdyRicardRoydorv2}.

\begin{enumerate}[(i)]
\item \emph{If $e$ has finite rank then $E$ has a total of three
distinct operator space strucures.}

\item \emph{If $eH$ is separably infinite then $E$ has a total of five
distinct operator space structures.}

\item \emph{If $eH$ is non-separable, and $\dim(eH) = \aleph_\alpha$,
 the cardinality of
the distinct operator space structures on $E$ is the cardinality
of the ordinal segment $[0,\alpha]$.}
\end{enumerate}

Together with Propositions~\ref{MAXJCisUnique} and
\ref{SpinOpSpace} this provides a complete description of the
\JC-operator spaces of all non-Hilbertian Cartan factors.
We shall extend
Theorem~\ref{ThmClassifOPSTR} to
Hilbertian Cartan factors in a forthcoming paper.
\end{remark}

\begin{corollary}
Let $E$ be a \JCstar-subtriple of a $C^*$-algebra, where $E$ is a
Cartan factor of type $R_{m,n}$ with $m$ countable and $1 < m < n$. Then
every triple automorphism of $E$ is a complete
isometry.
\end{corollary}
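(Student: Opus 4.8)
The plan is to reduce the statement to a rigidity property of the operator space ideals of $\TROu(E)$ and then to exploit the asymmetry forced by $m < n$. First, by Lemma~\ref{TROmorphsAreCC}(a) a triple automorphism $\phi$ of $E$ is precisely a surjective linear isometry, and by Proposition~\ref{PropIdealsGiveOpSpaceStructs} the given \JC-operator space structure on $E$ equals $E_{\mathcal{I}}$ for some operator space ideal $\mathcal{I}$ of $\TROu(E)$. Applying Proposition~\ref{CompIsomEIs} with $F = E$ and $\mathcal{J} = \mathcal{I}$, the map $\phi \colon E_{\mathcal{I}} \to E_{\mathcal{I}}$ is a complete isometry if and only if $E_{\mathcal{I}} = E_{\mathcal{K}}$, where $\mathcal{K} = \TROu(\phi)(\mathcal{I})$. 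Since $m > 1$ means $E$ is not a Hilbert space, Theorem~\ref{ThmClassifOPSTR} converts this into the single condition $\TROu(\phi)(\mathcal{I}) = \mathcal{I}$. As $\mathcal{I}$ is arbitrary, it therefore suffices to show that the TRO automorphism $\TROu(\phi)$ fixes \emph{every} operator space ideal of $\TROu(E)$.

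Next I would record the relevant structure, identifying $E$ with $\BH e$ where $m = \dim(eH) < \dim(H) = n$. By Theorem~\ref{TROuRmn} we have $\TROu(E) = E \oplus E^t$ with $\utro E(x) = x \oplus x^t$ and $e^t = e$, and from the proof of Theorem~\ref{ThmClassifOPSTR} the operator space ideals are exactly the $\mathcal{A} \oplus \{0\}$ and $\{0\} \oplus \mathcal{B}$, with $\mathcal{A}$ and $\mathcal{B}$ ranging over the ideals of $E$ and of $E^t$. These form two chains meeting only in $\{0\}$, with maximal elements $E \oplus \{0\}$ and $\{0\} \oplus E^t$; because $m$ is countable, $\mathcal{B}(eH)$ has only finitely many ideals and hence each chain is finite. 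Since $\TROu(\phi)$ is a TRO isomorphism that (together with its inverse) preserves operator space ideals and their inclusions, it induces an order automorphism of this poset, and in particular permutes the two maximal ideals $E \oplus \{0\}$ and $\{0\} \oplus E^t$.

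The heart of the argument is to show this permutation is trivial, that is, that $\TROu(\phi)$ cannot interchange the two summands; this is where $m < n$ is essential. A TRO isomorphism induces a $*$-isomorphism of the associated left $C^*$-algebras, and $\mathscr{L}_{\TROu(E)} = \mathscr{L}_E \oplus \mathscr{L}_{E^t}$, where $\mathscr{L}_E$ is the ideal of $\BH$ generated by $e$ and $\mathscr{L}_{E^t} = e\BH e = \mathcal{B}(eH)$. A swap of $E \oplus \{0\}$ and $\{0\} \oplus E^t$ would force a $*$-isomorphism $\mathscr{L}_E \cong \mathscr{L}_{E^t}$; but with $m < n$ these cannot be isomorphic — when $n$ is finite they are $M_n$ and $M_m$ with $m \neq n$, and when $n$ is infinite $\mathscr{L}_E$ is a proper, hence non-unital, ideal of $\BH$ whereas $\mathcal{B}(eH)$ is unital. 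Recovering $E \oplus \{0\}$ as $\mathscr{L}_E \cdot \TROu(E)$, it follows that $\TROu(\phi)$ fixes $E \oplus \{0\}$ and $\{0\} \oplus E^t$ setwise. I expect this exclusion of the swap to be the main obstacle, everything else being formal.

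Finally, $\TROu(\phi)$ restricts to a TRO automorphism of each summand, inducing an order automorphism of the finite chain of ideals of $E$, respectively of $E^t$. An order automorphism of a finite chain is the identity, so every $\mathcal{A} \oplus \{0\}$ and every $\{0\} \oplus \mathcal{B}$ is fixed. Hence $\TROu(\phi)$ fixes all operator space ideals, giving $\TROu(\phi)(\mathcal{I}) = \mathcal{I}$ and completing the proof.
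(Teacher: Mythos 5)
Your proof is correct, and it reaches the conclusion by a genuinely different route from the paper's. The paper fixes a surjective isometry from the model $F = \BH e$ onto the concrete $E$, forms the two TRO homomorphisms $\theta, \psi \colon \TROu(F) \to \TRO(E)$ induced by this identification and by its composition with the given triple automorphism (your $\phi$, the paper's $\pi$), and proves $\ker\theta = \ker\psi$ by showing that the quotients of $\TROu(F)$ by its five (three, in the finite rank case) operator space ideals are pairwise non-isomorphic TROs; this uses both that $F \not\cong F^t$ (as $m \neq n$) and that $F/\mathcal{K}$ and $F^t/\mathcal{K}^t$ (where $\mathcal{K} = \mathcal{K}(H)e$) possess no minimal tripotents. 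The two induced isomorphisms then splice into a TRO automorphism $\psi_{\mathcal{I}} \circ \theta_{\mathcal{I}}^{-1}$ of $\TRO(E)$ extending the triple automorphism, which is a complete isometry by Lemma~\ref{TROmorphsAreCC}. You instead reduce, via Propositions~\ref{PropIdealsGiveOpSpaceStructs} and~\ref{CompIsomEIs} and the bijection of Theorem~\ref{ThmClassifOPSTR}, to the invariance $\TROu(\phi)(\mathcal{I}) = \mathcal{I}$, and obtain this from order-rigidity of the poset of operator space ideals (two finite chains --- this is where countability of $m$ enters, exactly as the paper's separability hypothesis does) once the swap of the two maximal ideals is excluded. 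Both arguments therefore pivot on the same asymmetry, namely that $\BH e$ and $e\BH$ are not TRO isomorphic when $m \neq n$; your derivation of this via the left $C^*$-algebras (the ideal of $\BH$ generated by $e$, versus the unital $e\BH e$) is sound, and the induced $*$-isomorphism of left algebras you need is the same standard fact the paper uses in the proof of Theorem~\ref{HermAndSympThm}; alternatively you could simply cite the equivalence (b) $\Leftrightarrow$ (d) of the Corollary following Proposition~\ref{JCStAlgSubtriple}, since $c(e) = 1$ in the factor $\BH$ and $e \not\sim 1$. What your route buys: you never have to distinguish the five quotient TROs pairwise, so the minimal-tripotent argument disappears. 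What it costs: you invoke the full strength of Theorem~\ref{ThmClassifOPSTR} (whose proof rests on the $M_2(\IC)$-copy argument), whereas the paper's proof of this corollary needs only the list of operator space ideals of $\TROu(\BH e)$, not the bijection with \JC-operator space structures.
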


\begin{proof}
Let $\pi \colon E \to E$ and $\phi \colon F \to E$ be surjective
linear isometries, where $F = \BH e$ for a projection $e$ in $\BH$
such that $eH$ is separable with $1 < \dim(eH) < \dim (H)$. Let
$\theta, \psi \colon \TROu(F) \to \TRO(E)$ be the TRO homomorphisms
such that $\theta \circ \utro F = \phi$
and $\psi \circ \utro F = \pi
\circ \phi$ and let $\mathcal{I} = \ker \theta$, $\mathcal{J} = \ker
\psi$. We claim that $\mathcal{I} = \mathcal{J}$. The argument being
simpler when $eH$ has finite dimension, suppose that $eH$ is separably
infinite. The five operator space ideals of $\TROu(F)$ are the zero
ideal, $F \oplus \{0\}$, $\{0\} \oplus F^t$, $\mathcal{K} \oplus
\{0\}$ and $\{ 0\} \oplus \mathcal{K}^t$, where $\mathcal{K}$ denotes
$\mathcal{K}(H) e$. The corresponding quotient TROs are $F \oplus
F^t$, $F^t$, $F$, $F/\mathcal{K} \oplus F^t$ and $F \oplus
F^t/\mathcal{K}^t$ which are mutually non-isomorphic as TROs since $F$
and $F^t$ are not TRO isomorphic (as $m \neq n$)
and $F/\mathcal{K}$,
$F^t/\mathcal{K}^t$ do not possess minimal tripotents.
Thus, as $\mathcal{I}$ and $\mathcal{J}$ are  operator space ideals
with $\TROu(F)/\mathcal{I}$ TRO isomorphic to $\TROu(F)/\mathcal{J}$,
we have $\mathcal{I} = \mathcal{J}$. Hence if $\theta_{\mathcal{I}},
\psi_{\mathcal{I}} \colon \TROu(F)/\mathcal{I} \to \TRO(E)$ denote the
induced TRO isomorphisms, $\psi_{\mathcal{I}} \circ
\theta_{\mathcal{I}}^{-1}$
is a TRO automorphism of $\TRO(E)$ extending
$\pi$.
\end{proof}

\begin{theorem}
Let $E$ be a \JCstar-subtriple of a $C^*$-algebra where $E$ is a
Cartan factor which is not a Hilbert space.
We have the following.
\begin{enumerate}[(a)]
\item If $(I(E), j)$ is an injective envelope of $E$ where $I(E)$ is a
TRO, then $j \colon E \to I(E)$ extends to a TRO isomorphism $\psi
\colon \TRO(E) \to \mathcal{T}(E)$.
\item $E = \mbox{MAX}_{JC}(E)$ if and only if $(\TROu(E), \utro E)
\equiv (\TRO(E), \mathrm{inclusion})$.
\item Suppose $E = \mbox{MAX}_{JC}(E)$ or $E$ is reflexive. If $E$ is
not an infinite dimensional spin factor, then $(\TRO(E),
\mathrm{inclusion})$ is an injective envelope of $E$ and $\TRO(E) =
\mathcal{T}(E)$.
\end{enumerate}
\end{theorem}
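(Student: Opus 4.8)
The plan is to deduce parts (a) and (b) formally from the bijective correspondence of Theorem~\ref{ThmClassifOPSTR} and the universal property of the triple envelope recalled before Theorem~\ref{TripleEnvThm}, and then to settle part (c) by showing that $\TRO(E)$ is injective, verifying this case by case from the explicit descriptions of $\TROu(E)$ obtained in Section~\ref{SectionCartanFs}. For part (a), regard $E$ as an operator subspace of its ambient $C^*$-algebra and let $\tilde\pi \colon \TROu(E) \to \TRO(E)$ be the canonical surjective TRO homomorphism with $\tilde\pi \circ \utro E$ equal to the inclusion, so that $E = E_{\mathcal{I}_0}$ with $\mathcal{I}_0 = \ker\tilde\pi$. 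Applying the universal property of $\mathcal{T}(E)$ to the identity complete isometry of $E$ yields a surjective TRO homomorphism $\psi \colon \TRO(E) \to \mathcal{T}(E)$ with $\psi|_E = j$. On $\utro E(E)$ the composite $\psi \circ \tilde\pi$ agrees with the map $\tilde j$ of Theorem~\ref{TripleEnvThm}(b), hence $\psi \circ \tilde\pi = \tilde j$ by generation, so $\ker\psi = \tilde\pi(\ker\tilde j)$. Now Theorem~\ref{TripleEnvThm}(c) identifies $\ker\tilde j$ as the largest operator space ideal $\mathcal{I}$ with $E_{\mathcal{I}} = E$, while Theorem~\ref{ThmClassifOPSTR} makes $\mathcal{I} \mapsto E_{\mathcal{I}}$ injective; since $E = E_{\mathcal{I}_0}$, this forces $\ker\tilde j = \mathcal{I}_0 = \ker\tilde\pi$, whence $\ker\psi = \{0\}$ and $\psi$ is a TRO isomorphism by Lemma~\ref{TROmorphsAreCC}(b).

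Part (b) falls out of the same circle of ideas. The relation $(\TROu(E),\utro E) \equiv (\TRO(E),\text{inclusion})$ holds exactly when $\tilde\pi$ is injective, i.e.\ when $\mathcal{I}_0 = \{0\}$, since by the universal property of $\TROu(E)$ any such equivalence must be implemented by $\tilde\pi$. As $\mbox{MAX}_{JC}(E) = E_{\{0\}}$ and $E = E_{\mathcal{I}_0}$ always, the injectivity of the correspondence in Theorem~\ref{ThmClassifOPSTR} gives $E = \mbox{MAX}_{JC}(E)$ if and only if $E_{\mathcal{I}_0} = E_{\{0\}}$ if and only if $\mathcal{I}_0 = \{0\}$, which is the assertion.

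For part (c), part (a) identifies $\TRO(E)$ with the sub-TRO $\mathcal{T}(E)$ of $I(E)$, so it suffices to prove $\TRO(E)$ injective: then $\mathcal{T}(E)$ is an injective operator space with $j(E) \subseteq \mathcal{T}(E) \subseteq I(E)$, and essentiality of the injective envelope forces $\mathcal{T}(E) = I(E)$, giving both conclusions. If $E = \mbox{MAX}_{JC}(E)$ then $\TRO(E) = \TROu(E)$ by part (b), and the descriptions in Theorems~\ref{TROuRmn} and \ref{HermAndSympThm} and in Lemma~\ref{SpinsRevLemma} show $\TROu(E)$ is $\mathcal{B}(H)$, a sum $\mathcal{B}(eH,H) \oplus \mathcal{B}(H,eH)$, or (for finite spin factors) a finite $\ell_\infty$-sum of full matrix algebras, each an injective TRO; the sole exception is the infinite-dimensional spin factor, where $\TROu(E)$ is the non-injective CAR algebra and which is therefore excluded. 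If instead $E$ is reflexive, then it has finite rank and $\TRO(E) = \TROu(E)/\mathcal{I}_0$; running through the finite-rank types and their operator space ideals as computed in Section~\ref{SectionCartanFs}, each quotient is a full matrix algebra, a space $\mathcal{B}(K,H)$, or a finite $\ell_\infty$-sum of these, all injective.

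The formal parts (a) and (b) cost little; the real work, and the only genuine obstacle, is the injectivity bookkeeping in part (c). There one must pass through every non-Hilbertian Cartan-factor type, insert the explicit form of $\TROu(E)$ and of its operator space ideals, and confirm that the relevant quotient is an injective TRO, using that $\mathcal{B}(K,H)$ and full matrix algebras are injective and that injectivity is preserved under finite $\ell_\infty$-sums. The infinite-dimensional spin factor is precisely where this fails, its universal TRO being the CAR algebra; this is what dictates both its exclusion and the reflexive-or-$\mbox{MAX}_{JC}$ hypothesis.
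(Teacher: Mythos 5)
Your proof is correct and follows essentially the same route as the paper's: parts (a) and (b) rest on Proposition~\ref{PropIdealsGiveOpSpaceStructs}, Theorem~\ref{TripleEnvThm}(c) and the injectivity of the correspondence in Theorem~\ref{ThmClassifOPSTR}, while part (c) reduces to the injectivity of $\TROu(E)$ (and of $\TRO(E)$ in the reflexive case) read off from \S\ref{SectionCartanFs}, combined with minimality of the injective envelope. The only cosmetic difference is that in (a) you construct $\psi$ directly from the universal property of $\mathcal{T}(E)$ and compute its kernel, whereas the paper obtains it as the composite $\theta \circ \pi^{-1}$ of the two isomorphisms defined on the quotient $\TROu(E)/\ker\tilde{j}$.
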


\begin{proof}
\begin{enumerate}[(a)]
\item In that case, by Proposition~\ref{PropIdealsGiveOpSpaceStructs}
together with Theorems \ref{TripleEnvThm} and \ref{ThmClassifOPSTR},
we have $E = E_{\mathcal{I}}$, where $\mathcal{I} = \ker \tilde{j}$,
and TRO isomorphisms $\pi \colon \TROu(E)/\mathcal{I} \to
\TRO(E)$,
$\theta \colon \TROu(E)/\mathcal{I} \to \mathcal{T}(E)$ such that
$\pi(\utro E (x) + \mathcal{I}) = x$,
$\theta(\utro E (x) + \mathcal{I}) = j(x)$ for all $x \in E$. Thus
$\theta \circ \pi^{-1}$ is a TRO isomorphism extending $j$.

\item
If $E = \mbox{MAX}_{JC}(E)$ then by
Proposition~\ref{PropIdealsGiveOpSpaceStructs} and
Theorem~\ref{ThmClassifOPSTR} the TRO homomorphism $\pi \colon
\TROu(E) \to \TRO(E)$ such that $\pi \circ \utro E = \id_E$ is an
isomorphism. The converse is clear.

\item
Suppose that $E$ is not an infinite dimensional spin factor. Then, by
the results of \S\ref{SectionCartanFs}, $\TROu(E)$ is an injective
TRO, as is $\TRO(E)$ when $E$ is reflexive. Thus (c) is a consequence
of (a) and (b).
\qedhere
\end{enumerate}
\end{proof}

When finalising this work we learned that, by other means, D. Bohle and
W. Werner have independently observed the existence of a universal TRO of
a \JCstar-triple and computed the universal TRO of finite dimensional
\JCstar-triples.

\def\cprime{$'$} \def\polhk#1{\setbox0=\hbox{#1}{\ooalign{\hidewidth
  \lower1.5ex\hbox{`}\hidewidth\crcr\unhbox0}}}
  \def\polhk#1{\setbox0=\hbox{#1}{\ooalign{\hidewidth
  \lower1.5ex\hbox{`}\hidewidth\crcr\unhbox0}}}

\end{document}